\DeclareMathAlphabet{\mathpzc}{OT1}{pzc}{m}{it}
\def\bbz{\mathbb{Z}}
\def\bba{\mathbb{A}}
\def\bbn{\mathbb{N}}
\def\frakr{\mathfrak{r}}
\def\fraka{\mathfrak{a}}
\def\frakg{\mathfrak{g}}
\def\gfrak{\mathfrak{g}}
\def\frakh{\mathfrak{h}}
\def\frako{\mathfrak{o}}
\def\bG{\mathbf{G}}
\def\bH{\mathbf{H}}
\def\sl{\operatorname{SL}}
\def\ad{\operatorname{ad}}
\def\Ad{\operatorname{Ad}}
\def\vol{\operatorname{vol}}
\def\sob{\mathcal{S}}
\def\scal{\mathcal{S}}
\def\tF{\tilde F}
\def\tw{\tilde{w}}
\def\tGw{\tilde{G}_{\tilde{w}}}
\def\bu{\mathbf u}
\newtheorem{theorem}{Theorem}[section]
\def\norm#1{\left\Vert #1\right\Vert}
\newtheorem{proposition}[theorem]{Proposition}
\newtheorem{corollary}[theorem]{Corollary}
\newtheorem{lem}[theorem]{Lemma}
\newtheorem{lemma}[theorem]{Lemma}
\newtheorem{remark}[theorem]{Remark}
\newcommand{\levelbound}{L}
\newcommand{\diff}{\operatorname{d}}
\newcommand{\Sob}{\mathcal{S}}
\newcommand{\Lie}{\mathrm{Lie}}
\newcommand{\Siegel}{\Sieg}
\newcommand{\G}{\mathbf{G}}
\newcommand{\Sieg}{\mathfrak{S}}
\newcommand{\height}{\mathrm{ht}}
\newcommand{\m}{\underline{m}}
\newcommand{\SL}{\mathrm{SL}}
\renewcommand{\H}{\mathbf{H}}
\newcommand{\Q}{\mathbb{Q}}
\newcommand{\adele}{\mathbb{A}}
\newcommand{\vare}{\varepsilon}
\newcommand{\al}{a_\lambda}
\newcommand{\ul}{u_\lambda} 
\newcommand{\order}{\mathfrak o}
\newcommand{\Gfrak}{\mathfrak G}
\newcommand{\red}{\operatorname{red}}
\newcommand\disg{\mathsf{g}}
\newcounter{consta}
\newcounter{constk}
\renewcommand{\theconstk}{{\kappa_{\arabic{constk}}}}
\newcommand{\constk}{\refstepcounter{constk}\theconstk}
\newcounter{constc}
\newcounter{constE}
\renewcommand{\theconstE}{{{C}_{\arabic{constE}}}}
\newcommand{\constE}{\refstepcounter{constE}\theconstE}
\newcommand*\bigcdot{\mathpalette\bigcdot@{.5}}
\newcommand*\bigcdot@[2]{\mathbin{\vcenter{\hbox{\scalebox{#2}{$\m@th#1\bullet$}}}}}
\title{Property $(\tau)$ in positive characteristic}
\author{Amir Mohammadi}
\address{A.M.: Department of Mathematics, University of California, San Diego, CA 92093}
\email{ammohammadi@ucsd.edu}
\thanks{A.M.\ was partially supported by the NSF}
\author{Nattalie Tamam}
\address{Department of Mathematics, University of Michigan, Ann Arbor, Michigan, MI 48109}
\email{nattalie@umich.edu}
\date{}
\begin{document}

\maketitle

\begin{abstract}
    We prove a quantitative equidistribution statement for certain adelic homogeneous subsets in positive characteristic. 
As an application, we describe a proof of property $(\tau)$ for arithmetic groups in this context.
\end{abstract}

\tableofcontents

\section{Introduction}\label{sec:introductio}

Let~$X=L/\Lambda$ be the quotient of a locally compact group $L$ by a lattice~$\Lambda\subset L$. Any subgroup~$M\subset L$ acts on~$X$ by left multiplication. 
A {\em homogeneous measure} on $X$ is, by definition, a probability measure $\mu$ that is supported
on a single closed orbit $Y=M_Yg\Lambda$ of its stabilizer $M_Y=\mathrm{Stab}(\mu)$. A {\em homogeneous set} is the support of a homogeneous measure.  

Motivated by problems in number theory, the limiting behavior of a sequence $\{\mu_i\}$ of homogeneous measures has been extensively studied. When $L$ is a Lie group and ${\rm Stab}(\mu_i)$ is generated by unipotent subgroups, Mozes and Shah~\cite{Mozes-Shah} used Ratner's celebrated measure classification theorems~\cite{Ratner-measure} and the Linearization techniques of Dani and Margulis~\cite{DM-Linearization} to obtain a very satisfactory classification theorem for the limiting measure, see also~\cite{Eskin-Mozes-Shah-1996, GO-Manin, ELMV-1}.  

More recently, the quantitative aspects of this problem have attracted considerable attention. Indeed, in a landmark paper, Einsiedler, Margulis, and Venkatesh~\cite{EMV} proved an effective equidistribution theorem for homogeneous measures when $M$ and $L$ are semisimple Lie groups and $\Lambda$ is a congruence lattice, under some additional conditions; and in a followup work Einsiedler, Margulis, Venkatesh and the first named author~\cite{EMMV} extended this result to certain adelic periods in the number field setting. 

In this paper, we consider homogeneous measures in positive characteristic setting. Let us fix some notation in order to state our main results. Let $F$ be a global function field, and let $\mathbf{G}$ be an absolutely almost simple, simply connected $F$-group. Throughout the paper, we assume $\G$ does {\em not} admit non-standard isogenies. This condition is always satisfied if ${\rm char}(F)>3$. More precisely, $\G$ admits non-standard isogenies only in the following cases: if ${\rm char}(F)=2$ and $\G$ is of type $B_n, C_n, (n\geq 2)$ or of type $F_4$, another case is if ${\rm char}(F)=3$ and $\G$ is of type $G_2$, see~\cite[\S1]{Pink-Compact} and references therein.

Put $\bG'=\bG\times \bG$, and let 
\[
\bH=\{(h,h): h\in\G\}\subset \bG'
\]
be the diagonal embedding of $\bG$ in $\bG'$. 

Throughout the paper, $\Sigma$ denotes the set of all places of $F$. 
For each place $v\in \Sigma$, let $F_v$ be the completion of $F$ at $v$, and let $\bba_F$ (or simply $\bba$ if there is no confusion) denote the ring of adeles over $F$.

Let $G'=\bG'(\bba)$ and $X=G'/\bG'(F)$. 
We let $m_X$ denote the $G'$-invariant probability measure on $X$.
Let $\mathpzc i:\bH\to\bG'$ be the above inclusion. For every $g\in G'$, 
\[
Y_g:=g\mathpzc i\bigl(\bH(\bba)/\bH(F)\bigr)
\]
is a closed orbit in $X$ equipped with a probability measure $\mu_g$ which is invariant under $H_g:=g\bH(\bba)g^{-1}$. Throughout the paper, we drop $\mathpzc i$ from the notation and simply write $Y_g=g\bH(\bba)/\bH(F)$.

We will use the following notion of complexity which was introduced and studied in~\cite{EMMV}. Let $\Omega_0\subset G'$ be a compact open subgroup. 
Given $(Y_g,\mu_g)$ as above, define 
\begin{equation}\label{eq: def volume}
\vol(Y_g)=\mathsf{m}_g(\Omega_0\cap H_g)^{-1}    
\end{equation}
where $\mathsf{m}_g$ denotes a Haar measure on $H_g$ which projects to $\mu_g$. Note that a different choice of $\Omega_0$ only changes the above function by a multiplicative constant, see \cite[\S 2.3]{EMMV} for a proof.  

The following is the main theorem of this paper.

\begin{theorem}\label{thm;equidistribution}\label{thm:main}
    There exists $\kappa>0$ so that the following holds. Let $g\in G'$, and let $(Y_g,\mu_g)$ be as above. Then   
    \[
    \left|\int_{Y_g} f\diff\!\mu_{g}-\int f\diff\!m_X\right|\ll \vol(Y_g)^{-\kappa}\mathcal{S}(f),\qquad\text{$f\in C_c^\infty(X)$},
    \]
    where $\sob(f)$ denotes a certain adelic Sobolev norm, and the implied multiplicative constant depends on $X$. The exponent $\kappa$ depends only on the type of $\G$ and $F$. If $X$ is compact, then $\kappa$ depends only on the type of $\G$. 
\end{theorem}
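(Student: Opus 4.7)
The plan is to adapt the quantitative equidistribution framework of Einsiedler--Margulis--Venkatesh~\cite{EMV} and its adelic extension~\cite{EMMV} to the positive-characteristic setting. A convenient first step is to identify $X$ with $X_0\times X_0$, where $X_0=\bG(\bba)/\bG(F)$, so that with $g=(g_1,g_2)$ the period $Y_g$ becomes the graph of left translation by $\gamma=g_2g_1^{-1}$. In this picture, equidistribution of $Y_g$ amounts to an effective mixing statement for the $\bG(\bba)$-action on $L^2_0(X_0)$, with $\vol(Y_g)$ controlling a suitable height of $\gamma$.

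The argument then proceeds in three stages. First, I would set up a \emph{thick-thin decomposition}: via a positive-characteristic version of the Dani--Margulis nondivergence estimate, most of the mass of $\mu_g$ concentrates on a compact $\Xcompact\subset X$ whose injectivity radius is bounded below by a negative power of $\vol(Y_g)$, while the complement carries mass decaying polynomially in $\vol(Y_g)$. Second, on $\Xcompact$ one runs an \emph{effective mixing} step: choose a semisimple one-parameter subgroup $\{a_t\}\subset H_g$ and control $\int f(a_ty)\diff\mu_g(y)$ via spectral expansion together with a quantitative decay estimate for matrix coefficients of the regular representation on $L^2_0(X_0)$. Any fixed positive spectral gap is enough, provided one averages over a correspondingly long orbit.

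Third, and crucially, one invokes an \emph{effective closing argument}. If the mixing step shows that two points of $Y_g$ project to nearby points of $\Xcompact$ while their parameters on $Y_g$ are far apart, then discreteness of $\bG'(F)$ produces a return element $\delta\in\bG'(F)\setminus\bH(F)$ of controlled size. The Zariski closure of the subgroup generated by $\delta$ together with $H_g$ is then all of $\bG'$---this is precisely where the hypothesis on non-standard isogenies enters, through the classification of closed subgroups of $\bG\times\bG$ containing the diagonal. An effective version of the Mautner phenomenon then upgrades the $H_g$-invariance of $\mu_g$ to invariance under a definite neighborhood of the identity in $G'$, and uniqueness of the $G'$-invariant probability measure identifies $\mu_g$ with $m_X$ up to a polynomially small error.

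The principal obstacle is producing the required decay of matrix coefficients without assuming property $(\tau)$ itself, since $(\tau)$ is the intended application. The way around this, as in the works cited, is to use only a crude uniform spectral gap---obtained from pointwise bounds on local unitary representations of $\bG(F_v)$ combined with a counting argument for Hecke returns---and to compensate for its weakness through the closing mechanism above. A secondary technical difficulty is that the adelic Sobolev norm $\sob(f)$ and the nondivergence estimates must be calibrated to absorb wild ramification and level structure at bad places; the standing hypothesis on isogenies is what keeps the local commutator and volume estimates in the effective mixing step uniform across characteristics, and it is the reason $\kappa$ can be taken independent of $F$ in the compact case.
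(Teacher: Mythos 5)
Your high-level architecture — nondivergence, a ``closing'' step producing a nontrivial return, and a spectral-gap input via Hecke-type operators — matches the paper's framework in spirit, but there are several substantive mismatches and gaps.

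First, the paper's proof hinges on constructing a \emph{good place} $w$ (Proposition~\ref{prop: splitting-place}) where $\G$ splits over $F_w$, the relevant maximal compacts are hyperspecial, $g$ is trivial at $w$, and crucially $q_w\ll(\log\vol(Y_g))^2$. This requires Prasad's volume formula and Chebotarev density, and the bound on $q_w$ is what makes the final exponent effective. Your proposal never produces such a place; without it, ``choose a semisimple one-parameter subgroup $\{a_t\}\subset H_g$'' has no quantitative content, and the Sobolev/height calibration you gesture at has no anchor.

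Second, you misidentify where the non-standard-isogeny hypothesis enters. It is not used to classify closed subgroups of $\bG\times\bG$ containing the diagonal — the identification of $\mathrm{Stab}(\mu_g)$ with $H_g\cdot{\bf C}(F)$ (Lemma~\ref{lem:stabilizer}) uses only that $\G$ is absolutely almost simple. The hypothesis is used exactly once, in Lemma~\ref{lem:effectively generation}, to guarantee that the $F_w$-span of $\{\Ad(h_i)\mathfrak u_\lambda\}$ is all of $\g_w$ (via \cite[Prop.~1.11]{Pink-Compact}), which then makes the product map from conjugates of $U_\lambda[0]$ onto $K_w[\ref{k: Gw generation}]$ a separable surjection. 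Without this you cannot pass from almost-invariance under a single root subgroup to almost-invariance under a neighborhood of the identity. Your ``effective Mautner phenomenon'' step is not what the paper does and would not obviously give the needed quantitative control in positive characteristic.

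Third, the paper explicitly departs from the EMV/EMMV template you are trying to transplant: rather than effective ergodic theorems for unipotent flows, it uses averages over expanding pieces of root subgroups, i.e.\ the operator $D_n f(x)=q_w^n\int_{B(0,q_w^{-n})}f(a'_\lambda(\varpi_w^{-n})u'_\lambda(s)x)\diff s-\mu(f)$ and the notion of $n_0$-generic points. The closing step (Lemma~\ref{lem:pigeonhole}) then pairs with Lemma~\ref{lem:sl2-rep} and the $\mathrm{SL}_2$-conjugation trick in Proposition~\ref{prop:almost inv1} to extract almost-invariance under $u_\lambda(\tau)$ for all $\tau\in B(0,1)$. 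Finally, you miss the characteristic-$2$ subtlety entirely: $r^2-s^2=(r-s)^2$ in characteristic $2$, so the implicit-function argument only reaches squares, and an extra bespoke argument (Lemma~\ref{lem:char 2}) is needed to generate the full parameter ball. As written, your sketch would silently fail in characteristic $2$, and the non-standard-isogeny hypothesis would appear to do nothing.
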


Similar to the proofs in~\cite{EMV,EMMV}, our proof relies on uniform spectral gap. However, our treatment deviates from loc.\ cit.\ in that instead of using effective ergodic theorems for the action of unipotent subgroups, we use averages over expanding pieces of root subgroups, see \S\ref{sec:generic pts}. This idea (which is due to Margulis, see~\cite{Moh-MarkovSpec}) makes the analysis in the case at hand more transparent. We also use the work of Prasad~\cite{Prasad-VolForm}; albeit our application here is less intricate than in~\cite{EMMV}, in particular, our treatment does not rely on~\cite{BP-Finiteness}, as $\mathbf H\subset \G'$ is simply connected and {\em embedded}.       

\medskip

As it was mentioned our proof relies on uniform spectral gap. However, it also allows us to give an independent proof of property $(\tau)$ in all cases except for groups of type $A_1$. 
That is, if we assume property $(\tau)$ for groups of type $A_1$, Drinfeld~\cite{Drinfeld}, and use the uniform estimates on decay of matrix coefficients for groups with Kazhdan's property $(T)$,  Oh~\cite{HeeOh-T}, we can deduce property $(\tau)$ in all other cases as well as our theorem.  

In the number field setting, the proof of property $(\tau)$ has a long history and was completed thanks to several deep contributions, see~\cite{Kazhdan-T, HeeOh-T, Selberg-ThreeSixteenth, Jacquet-Langlands, Burger-Sarnank, Cl-tau, Gor-Mau-Oh}. 
In the positive characteristic setting, the existing literature is far less satisfactory beyond groups with property $(T)$, the aforementioned work of Drinfeld~\cite{Drinfeld}, and the work of Lafforgue~\cite{Lafforgue1, Lafforgue2}. However, the statement itself is agreed upon by the experts. We hope the following theorem will contribute to filling this lacuna in the literature.

\begin{theorem}[Property $\tau$] \label{thm: property tau}
Let the notation and assumptions be as above, in particular, we assume that $\G$ does {\em not} admit non-standard isogenies. 

	Let $v\in\Sigma$ be such that $\G$ is isotropic over $F_v$. Then $L^2_0(\G(\adele)/\G(F))$, 
	the orthogonal complement of $\G(\adele)$-invariant functions, is isolated from the trivial representation as a representation of $\G(F_v)$. Moreover, this isolation (spectral gap) is independent of the $F$-form of $\G$. 
\end{theorem}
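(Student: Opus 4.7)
The plan is to derive Theorem~\ref{thm: property tau} from Theorem~\ref{thm:main} itself, by unfolding periods of tensor-product test functions into matrix coefficients of the regular representation. For $f_1,f_2\in C_c^\infty(\G(\adele)/\G(F))$ and $g=(g_1,g_2)\in G'$, direct computation gives
\[
\int_{Y_g}(f_1\otimes\bar f_2)\,d\mu_g
=\int_{\G(\adele)/\G(F)}f_1(g_1h)\overline{f_2(g_2h)}\,dh
=\langle\pi(g_1g_2^{-1})f_1,f_2\rangle,
\]
where $\pi$ is the right regular representation of $\G(\adele)$ on $L^2(\G(\adele)/\G(F))$. For $f_1,f_2\in L^2_0\cap C_c^\infty$ the main term $(\int f_1)(\overline{\int f_2})$ vanishes, so Theorem~\ref{thm:main} becomes the quantitative matrix coefficient bound
\[
\bigl|\langle\pi(g_1g_2^{-1})f_1,f_2\rangle\bigr|
\ll\vol(Y_g)^{-\kappa}\,\Sob(f_1\otimes\bar f_2).
\]

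The next step is to specialise this bound to $g=(a_v,e)$ concentrated at $v$, with $a_v\in\G(F_v)$ lying in the positive Weyl chamber of a maximal $F_v$-split torus (which exists because $\G$ is isotropic at $v$). Away from $v$, the intersection $\Omega_0\cap H_g$ contains the entire diagonal copy of the chosen compact open subgroup, while at $v$ it is controlled by the intersection of a fixed hyperspecial (or good special) maximal compact subgroup with its conjugate by $a_v$. A standard volume computation then shows that $\vol(Y_{(a_v,e)})$ grows like a positive power of the Cartan exponent of $a_v$, and so we obtain polynomial decay of $|\langle\pi(a_v)f_1,f_2\rangle|$ as $a_v$ leaves compacta in $\G(F_v)$.

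Polynomial decay of matrix coefficients on a dense subspace of smooth vectors implies, via a standard Howe--Moore/Cowling--Haagerup argument, the isolation of the trivial representation in $L^2_0$ viewed as a representation of $\G(F_v)$, which is precisely property $(\tau)$. The uniformity of the gap in the $F$-form is inherited from the fact that the exponent $\kappa$ and the implied constant in Theorem~\ref{thm:main} depend only on the type of $\G$ and on $F$. Closing the loop, the uniform spectral gap required to apply Theorem~\ref{thm:main} is supplied, for $\G$ not of type $A_1$, by combining Drinfeld's property $(\tau)$ for forms of type $A_1$ (applied to a suitable $A_1$-subgroup of $\G$) with Oh's quantitative matrix-coefficient decay for groups with Kazhdan's property $(T)$ at places where the local rank is $\geq 2$.

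The main obstacle is the volume/Sobolev computation in the second paragraph: one must show that the abstract decay $\vol(Y_g)^{-\kappa}$, which measures the twist of $H_g$ relative to the adelic compact open $\Omega_0$, translates into a genuine decay rate in the Cartan direction at the chosen place $v$, with all implicit constants controlled uniformly in the $F$-form. This requires cleanly decoupling the $v$-component from the rest of the adeles and tracking the behaviour of the adelic Sobolev norm of $f_1\otimes\bar f_2$ so that the final bound depends only on the local data at $v$ and produces a spectral gap.
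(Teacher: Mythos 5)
Your proposal follows essentially the same route as the paper: unfold the period over $Y_g$ of a test function $f=f_1\otimes\bar f_2$ into the matrix coefficient $\langle g.f_1,f_2\rangle$, observe that $\vol(Y_g)$ is comparable to $\|g\|$ up to fixed powers when $g$ is concentrated at a single place $v$, apply Theorem~\ref{thm:main} to get polynomial decay, and invoke the Cowling--Haagerup--Howe machinery to upgrade this to a uniform spectral gap independent of the $F$-form; this is precisely the paper's argument, packaged there through the intermediate Theorem~\ref{thm:prop-tau}.

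One small inaccuracy in your closing paragraph: the non-circularity in the rank $\geq 2$ case does not involve applying Drinfeld's theorem to an $A_1$-subgroup of $\G$. Rather, Proposition~\ref{prop: splitting-place} produces a good place $w$ at which $\G$ is $F_w$-split, so when the absolute rank of $\G$ is at least $2$ the local group $\G(F_w)$ has Kazhdan's property $(T)$ and Oh's decay bound alone supplies the spectral input (properties S5/S6) to the proof of Theorem~\ref{thm:main}. Drinfeld is used only in the complementary case where $\G$ is itself of type $A_1$, where Theorem~\ref{thm: property tau} is simply quoted directly from his work and Theorem~\ref{thm:main} is not needed for the deduction.
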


\subsection*{Acknowledgment}
We would like to thank H.\ Oh for her interest in this project and also for several helpful discussions. We are particularly grateful to her for communications regarding~\cite{HeeOh-T}. We would also like to thank G.\ Prasad, I.\ Rapinchuk, and A.\ Salehi Golsefidy for helpful discussions.

\section{Notation and preliminaries}\label{notation}\label{sec:notation}
Let $F$ be a global function field. Let $\Sigma$ be the set of places on $F$, and let $\bba$ be the ring of adeles over $F$. 
For each place $v\in \Sigma$, let $F_v$ be the completion of $F$ at $v$. Let $\order_v$ be the ring of $v$-integers in $F_v$; $k_v$ denotes the residue field of $\order_v$ and $\varpi_v$ is a uniformizer of $\frako_v$. We will denote by $|x|_v$ the absolute value on $F_v$.
Note that $\frako_v$ is the maximal compact subring of $F_v$. Let $B_{v}(a,r)=\{b\in F_v: |a-b|_v\leq r\}$.

As in the introduction, let $\mathbf{G}$ be an absolutely almost simple, simply connected $F$-group which does not admit non-standard isogenies. This condition is always satisfied if ${\rm char}(F)>3$. More precisely, $\G$ admits non-standard isogenies only if ${\rm char}(F)=3$ and $\G$ is of type $G_2$ or ${\rm char}(F)=2$ and $\G$ is of type $B_n, C_n, (n\geq 2)$ or of type $F_4$, see~\cite[\S1]{Pink-Compact} and references therein.

Put $\bG'=\bG\times \bG$. Let 
\[
\bH=\{(h,h): h\in\G\}\subset \bG'
\]
be the diagonal embedding of $\bG$ in $\bG'$. We also write $\G^1=\G\times \{1\}$ and $\G^2=\{1\}\times \G$. For every group $\bf L$, we put $L={\bf L}(\adele)$. We will often simplify the notation and denote $G^1$ simply by $G$, but $G^2$ will always be denoted as $G^2$. 

Abusing the notation slightly, for every $g\in G'$ and a subset $B\subset G$, we let
\[
\Delta_g(B)=g\{(\disg,\disg): (\disg,1)\in B\}g^{-1}.
\] 
We adapt a similar notation for subsets of $\G(F_v)$ for all $v\in\Sigma$.

For $i=1,2$, let $\pi^i$ denote the projection from $\G'$ onto the $i$-th component. Denote by $\iota^1:\G\to\G'$ the map $g\mapsto (g,1)$ and $\iota^2$ is the map $g\mapsto (1,g)$. For every $v\in\Sigma$, we let $\pi_v: G'\to \G'(F_v)$ denote the natural projection, and $\iota_v:\G'(F_v)\to G'$ denotes the natural embedding in the $v$-th place. Put $\pi_v^i:=\pi^i\circ \pi_v$ and $\iota_v^i=\iota_v\circ\iota^i$.

For $L=G^i, G', H$ and every $v\in \Sigma$, let $L_v=\iota_v({\bf L}(F_v))$. Given $g=(g_v)\in G'$ and $w\in\Sigma$, we will let 
\[
g^w=\iota_w\circ\pi_w(g).
\]
That is, $g^w_w=g_w$ and $g^w_v=1$ for all $v\neq w$.

Let $\rho:\G\rightarrow \SL_N$ be an embedding defined over $F$.   
For any $v\in\Sigma$, let 
\[
K_v=\iota^1_v\bigl(\rho^{-1}(\SL_{N}(\frako_v))\bigr)\subset G_v
\]
and let $K\subset G$ denote the product of $K_v$ for all $v$. 
For $m\ge1$, let
\begin{equation} \label{Kvm definition}
K_v[m] := \mathrm{ker}(K_v \rightarrow \SL_N(\frako_v/\varpi_v^m \frako_v));
\end{equation}
note that $K_v= K_v$. Let 
\[
K_v' := \{(g^1,g^2): (g^i,1)\in K_v\}.
\]
Define $K'$ and $K_v'[m]$ accordingly. 
We choose $\Omega_0$ in~\eqref{eq: def volume} to be $K'$. 
For any $L\subset K_v$, $v\in\Sigma$, and $m\geq0$ we defined $L[m]:=L\cap K[m]$.

For any subspace $\frakh\subset\frakg_w$ we write $\frakh[0]$ for the preimage of the $\frako_w$-integral $N \times N$ matrices under the restriction of the differential $D\rho: \mathfrak{g} \rightarrow \mathfrak{sl}_N$ to $\frakh$.
More generally, we write $\mathfrak{h}[m]$ for the preimage of the matrices
all of whose entries have valuation at least $m$.

\medskip

Recall that $X=G'/\bG'(F)$ and $Y_g=gH/\H(F)$ where $g\in G'$. The homogeneous set $Y_g$ is equipped with the $H_g=gHg^{-1}$-invariant probability measure $\mu_g$.

 \begin{lemma}\label{lem:stabilizer}
    We have ${\rm Stab}(\mu_g)=H_g\cdot {\bf C}(F)$, where ${\bf C}$ denotes the center of $\G'$.
    \end{lemma}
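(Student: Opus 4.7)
The plan is to first show that the setwise stabilizer $\{s\in G':sY_g=Y_g\}$ already equals $H_g\cdot\mathbf{C}(F)$, and then verify that every element of this stabilizer actually preserves $\mu_g$. After conjugating by $g$, I may work with $g=1$, so $Y_1=H\mathbf{G}'(F)/\mathbf{G}'(F)$. The inclusion $H\cdot\mathbf{C}(F)\subseteq\mathrm{Stab}(\mu_1)$ is immediate, since $\mathbf{C}(F)\subset\mathbf{G}'(F)$ acts trivially on $X$ and $H$ acts on $Y_1$ by left translation.

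The key algebraic input is to identify the algebraic normalizer $\mathbf{N}:=N_{\mathbf{G}'}(\mathbf{H})$. A direct calculation with the diagonal embedding shows that $(a,b)\in\mathbf{N}$ iff $b^{-1}a\in Z(\mathbf{G})$, hence $\mathbf{N}=\mathbf{H}\cdot\mathbf{C}$ as $F$-groups; moreover the factorization $(a,b)=(b,b)\cdot(b^{-1}a,1)$ yields the rational identity $\mathbf{N}(F)=\mathbf{H}(F)\cdot\mathbf{C}(F)$ with no Galois cohomology needed. Now suppose $s\in G'$ satisfies $sY_1=Y_1$. Evaluating at the base point gives $s=h_0\gamma_0$ with $h_0\in H$ and $\gamma_0\in\mathbf{G}'(F)$, and the full setwise condition reduces to $\gamma_0 H\gamma_0^{-1}\subseteq H\cdot\mathbf{G}'(F)$. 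Setting $\mathbf{H}_1:=\gamma_0\mathbf{H}\gamma_0^{-1}$, which is an $F$-subgroup of $\mathbf{G}'$ since $\gamma_0\in\mathbf{G}'(F)$, the task reduces to proving $\mathbf{H}_1=\mathbf{H}$: that forces $\gamma_0\in\mathbf{N}(F)=\mathbf{H}(F)\mathbf{C}(F)$ and hence $s\in H\cdot\mathbf{C}(F)$.

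The main technical step is this equality $\mathbf{H}_1=\mathbf{H}$. Given $x\in\mathbf{H}_1(F_v)$, viewed inside $\mathbf{H}_1(\mathbb{A})$ with trivial coordinates off $v$, write $x=h\delta$ with $h\in H$ and $\delta\in\mathbf{G}'(F)$. At each place $w\ne v$ one has $h_w=\delta_w^{-1}$, which forces $\delta\in\mathbf{H}(F_w)$; since this holds at every such $w$ and $\mathbf{H}$ is a closed $F$-subscheme, $\delta\in\mathbf{H}(F)$. The $v$-coordinate then gives $x_v\in\mathbf{H}(F_v)$, so $\mathbf{H}_1(F_v)\subseteq\mathbf{H}(F_v)$. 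Zariski density of $F_v$-points in the smooth connected $F_v$-group $\mathbf{H}_1$, together with $\dim\mathbf{H}_1=\dim\mathbf{H}$, then yields $\mathbf{H}_1=\mathbf{H}$.

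Conjugating back produces $\mathrm{Stab}(Y_g)=H_g\cdot\mathbf{C}(F)$. To close the gap between $Y_g$ and $\mu_g$, the inclusion $\mathrm{Stab}(\mu_g)\subseteq\mathrm{Stab}(Y_g)$ is automatic from $\mathrm{supp}(\mu_g)=Y_g$; conversely, every element of $H_g\cdot\mathbf{C}(F)$ acts on $Y_g$ as a left translation by $H_g$ (which preserves the unique $H_g$-invariant probability measure $\mu_g$ on its orbit) composed with the trivial action of $\mathbf{C}(F)$ on $X$. The main obstacle is the place-by-place analysis in the third paragraph; the rest is either a direct algebraic computation or a formal consequence.
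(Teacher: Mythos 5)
Your argument is correct, but it takes a genuinely different route from the paper's. The paper is a two-line proof: it invokes Lemma~2.2 of \cite{EMMV}, which for a closed orbit $g\H(\adele)\G'(F)/\G'(F)$ asserts $\mathrm{Stab}(\mu_g)=g H\,\mathbf N(F)g^{-1}$ with $\mathbf N=N_{\G'}(\H)$, and then identifies $\mathbf N=\H\cdot\mathbf C$ from the diagonal description of $\H$. You instead re-derive the structural input to that citation from scratch: evaluating the setwise stabilizer at the base point to get $s=h_0\gamma_0$, reducing to $\gamma_0\H(\adele)\gamma_0^{-1}\subseteq H\cdot\G'(F)$, and then running the place-by-place decomposition $x=h\delta$ (with $h\in H$, $\delta\in\G'(F)$) to force $\delta_w\in\H(F_w)$ at each $w\ne v$, hence $\delta\in\H(F)$ by rationality, hence $\H_1(F_v)\subseteq\H(F_v)$, and finally $\H_1=\H$ by Zariski density and equality of dimensions. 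That adelic rigidity argument is precisely the content the EMMV lemma packages in general. Your normalizer computation $\mathbf N=\H\cdot\mathbf C$ and the rational factorization $\mathbf N(F)=\H(F)\mathbf C(F)$ agree with the paper's, and your closing loop $\mathrm{Stab}(\mu_g)\subseteq\mathrm{Stab}(Y_g)=H_g\mathbf C(F)\subseteq\mathrm{Stab}(\mu_g)$ neatly converts the orbit stabilizer into the measure stabilizer, a point the citation absorbs implicitly. What your route buys is self-containment --- no appeal to \cite{EMMV} --- at the price of length; the paper buys brevity by delegating the adelic step. One point you state without comment but that deserves a flag in positive characteristic: Zariski density of $\H_1(F_v)$ in $\H_1$ is being used; this holds here because $\H_1\cong\G$ is connected reductive, hence unirational over $F_v$, but it is not automatic for arbitrary linear algebraic groups over non-perfect local fields.
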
 
    
    \begin{proof}
        By virtue of~\cite[Lemma 2.2]{EMMV}, we have 
        \[
        {\rm Stab}(\mu_g)=g H{\bf N}(F)g^{-1} 
        \]
        where ${\bf N}$ denotes the normalizer of $\H$ in $\G'$. Since $\H=\{(h,h): h\in \G\}$ and $\G$ is absolutely, almost simple, we conclude that ${\bf N}=\H\cdot {\bf C}$ which implies the claim.  
    \end{proof}

\begin{lemma}
For every $g\in G'$, there exists some $\hat g\in G^2$ so that 
$Y_{g}=Y_{\hat g}$.
\end{lemma}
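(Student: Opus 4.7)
The plan is to exhibit $\hat g$ explicitly, using the direct product decomposition $\bG'=\bG\times\bG$. Under this decomposition we may write $g=(g_1,g_2)$ with $g_1,g_2\in G=\G(\adele)$; equivalently, $g=\iota^1(g_1)\iota^2(g_2)$. Two points $g,g'\in G'$ give the same orbit $Y_g=Y_{g'}\subset X$ precisely when $g^{-1}g'\in H\cdot\G'(F)$, since $Y_g$ is the image of $gH$ in $G'/\G'(F)$. So it suffices to produce $\hat g\in G^2$ with $g^{-1}\hat g\in H$ (we do not even need to use $\G'(F)$).

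The natural candidate is $\hat g:=\iota^2(g_2 g_1^{-1})=(1,g_2 g_1^{-1})\in G^2$. A direct computation gives
\[
g^{-1}\hat g=(g_1^{-1},g_2^{-1})(1,g_2g_1^{-1})=(g_1^{-1},g_1^{-1})\in H,
\]
where the last inclusion uses that $H=\{(h,h):h\in G\}$ is the diagonal. Hence $\hat g\in gH$, so $\hat g H=gH$, and projecting to $X=G'/\G'(F)$ yields $Y_{\hat g}=Y_g$ as homogeneous subsets of $X$. Since $H_{\hat g}=\hat g H\hat g^{-1}$ and $H_g=gHg^{-1}$ coincide (they are conjugate by an element of $H$), the associated probability measures $\mu_{\hat g}$ and $\mu_g$ agree as well. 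There is no serious obstacle: the statement is essentially a bookkeeping lemma exploiting that $H$ sits diagonally inside $\bG\times\bG$, which allows one to absorb the $G^1$-component of any $g\in G'$ into $H$ and thus normalize to $\hat g\in G^2$.
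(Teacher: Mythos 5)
Your proof is correct and follows the same route as the paper: both compute $\hat g=(1,g_2g_1^{-1})$ and observe $g^{-1}\hat g\in H$ (the paper phrases this as a reparametrization of the set $gH$, you phrase it as a coset computation). One small caveat: the "precisely when $g^{-1}g'\in H\cdot\G'(F)$" biconditional is not justified in the ``if'' direction (since $\G'(F)$ need not normalize $H$), but you never use it — you only use the trivial sufficient condition $g^{-1}\hat g\in H$, which is all that is needed.
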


\begin{proof}
    Let $g=(g^{(1)}, g^{(2)})$, then 
    \begin{align*}
       Y_{g}&=\bigl\{\bigl(g^{(1)}, g^{(2)}\bigr)(h,h): h\in \G(\adele)\bigr\}\\
       &=\bigl\{\bigl(g^{(1)}h, g^{(2)}(g^{(1)}\bigr)^{-1}g^{(1)}h\bigr): h\in \G(\adele)\bigr\}\\
       &=\bigl\{(h, g^{(2)}(g^{(1)})^{-1}h): h\in \G(\adele)\bigr\}.
    \end{align*}
    The claim thus holds with $\hat g=\bigl(1, (g^{(2)}(g^{(1)})^{-1})_v\bigr)$.
    \end{proof}
    
    In view of this lemma, from this point until the end of the paper, we will always assume $g\in G^2$.

\subsection{Properties of a split Lie algebra}\label{sec: split Lie-alg}

Let $\mathfrak{g}$ be the Lie algebra of $\bG$ and for any $v\in\Sigma$, let 
$\mathfrak{g}_{v} := \Lie(\G) \otimes F_{v}$. 
We fix $w$ such that $\G$ is $F_w$-split. 
In particular, $G_w$ is a Chevalley group (see \cite[\S 3.4.2]{tits}). 
Let $\Phi$ be the set of roots for $G_w$. Let $p_w:={\rm char}(k_w)$.

Let $\fraka$ be a maximal split torus of $\frakg_w$. 
Then, $\frakg$ can be decomposed as $\frakg=\frakg_0\oplus\bigoplus_{\lambda\in\Phi}\frakg_\lambda$, where for any $\lambda\in\fraka^*$
\begin{equation}\label{eq:root space}
\frakg_\lambda:=\{v\in\frakg:\forall a\in\fraka,\ad(a)v=\lambda(a)v\},
\end{equation}
and $\Phi\subset\fraka^*$ is the set of roots for $\frakg_w$, i.e., the set of non-zero characters for which \eqref{eq:root space} is non-empty (see \cite[\S 2.6]{Steinberg}). 

For any $\lambda\in\Phi$, there exists a one-dimensional Lie algebra $\frakg_\lambda\subset\frakg$, and a one-dimensional unipotent subgroup $U_\lambda=\{u_\lambda(r)\}\subset G_w$ such that $\frakg_\lambda$ is the Lie algebra of $U_\lambda$ (see \cite[\S 2.3]{BoSp-Rationality}). 
Set $L_\lambda=\langle U_\lambda, U_{-\lambda}\rangle$. 
Then, for any $t$ we can also denote by $a_\lambda(t)$ the unique diagonal element in $L_\lambda$ 
such that \[a_\lambda(t) u_\lambda(1) a_\lambda(t^{-1})=u(t^2).\]

Fix two opposite Borel subgroups $B$ and $B^-$ in $G_w$. 
Denote by $U$ and $U^-$ the unipotent radicals of $B$ and $B^-$, respectively, and by $A=B\cap B^-$ 
a maximal split torus of $G_w$. For every nonnegative integer $m$, let 
\[
U_m=U\cap K_w[m],\quad U^-_m=U^-\cap K_w[m],\quad\text{and}\quad A_m=A\cap K_w[m].
\]
Then, according to \cite[\S 3.1.1]{Ti} for any $m\in\bbn$, we have 
\begin{equation}\label{eq: Km decomp}
    K_w[m]= U_m^-A_mU_m. 
\end{equation}
Fix an ordering $\lambda_1,\ldots, \lambda_d$ on $\Phi^+$ once and for all; we use the ordering $-\lambda_1,\ldots,-\lambda_d$ on $\Phi^-$. 
In view of~\cite[\S 3.1.1]{Ti} again, the product map from $\prod_{i=1}^d U_{\pm\lambda_i}[m]$ to $U^\pm[m]$ is a bijection. 
For every $\lambda\in \Phi$ and every $g\in U^\pm[1]$, we define $g_\lambda$ to be the $\lambda$-coordinate of $g$ in this identification. Using \eqref{eq: Km decomp}, we can define $g_\lambda$ for any $\lambda\in\Phi$ and $g=u^-au^+\in K_w[1]$ by taking $g_\lambda=u_\lambda^\pm$. 

For every non-negative integer $m$, we also define       
\[
A_{-m}=\{a\in A: aU_{2m}a^{-1}\subset U_0\;\;\text{and}\;\; a^{-1}U_{2m}^-a\subset U_0^-\}.
\]

\begin{lemma}\label{lem:Chevalley irred rep}
There exists some $m_0\in\bbn$ (depending only on the dimension of $\G$) such that the following holds. For every $g\in K_w[2m_0+1]$, there exists $\lambda\in \Phi$, so that
\[
\bigl|\bigl\{g'\in K_wA_{-m_0}K_w: \|(g'gg'^{-1})_\lambda-I\|\geq q_w^{-m_0}\|g'gg'^{-1}-I\|\bigr\}\bigr|\geq q_w^{-3m_0\dim\G}.
\]
\end{lemma}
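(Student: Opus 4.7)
The proof reduces to a Lie-algebra statement about weight components of $\Ad(g')\log g$, after which a case analysis based on whether $\log g$ has a dominant root component or lies primarily in the Cartan does the work. The main observation is that since $1\in A_{-m_0}$, we have $K_w\subset K_wA_{-m_0}K_w$, so it suffices to find the asserted set of $g'$ inside $K_w$ itself. Choose $m_0$ large enough, depending only on $\dim\G$ and root-system constants, so that the logarithm converges on $K_w[m_0]$ and $q_w^{-m_0}$ is smaller than all the combinatorial constants from the root system $\Phi$ which will appear. Writing $X=\log g\in\mathfrak g_w$ and decomposing $X=X_0+\sum_{\lambda\in\Phi}X_\lambda$ with $X_0\in\mathfrak a$ and $X_\lambda\in\mathfrak g_\lambda$, the decomposition \eqref{eq: Km decomp} and standard BCH estimates give $\|h_\lambda-I\|\asymp\|(\log h)_\lambda\|$ for $h\in K_w[1]$, so we only need to control the weight components of $\Ad(g')X$.

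\textbf{Case A (a root piece of $X$ already dominates).} Suppose $\|X_{\lambda_0}\|\ge|\Phi|^{-1}\|X\|$ for some $\lambda_0\in\Phi$. For $g'\in K_w[m_0]$, expanding $\Ad(g')X=X+[\log g',X]+\cdots$ together with $\|\log g'\|\le q_w^{-m_0}$ yields $\|\Ad(g')X-X\|\le q_w^{-m_0}\|X\|$. Since $q_w^{-m_0}<|\Phi|^{-1}$ by the choice of $m_0$, the strong triangle inequality gives $\|(\Ad(g')X)_{\lambda_0}\|=\|X_{\lambda_0}\|\ge|\Phi|^{-1}\|X\|\ge q_w^{-m_0}\|\Ad(g')X\|$, so the lemma's inequality holds with $\lambda=\lambda_0$ on all of $K_w[m_0]$, a set of Haar measure $\asymp q_w^{-m_0\dim\G}$, far above $q_w^{-3m_0\dim\G}$.

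\textbf{Case B (no root piece dominates).} Then $\|X_0\|\ge(1-|\Phi|^{-1})\|X\|$. Because $\Phi$ spans $\mathfrak a^*$ there is some $\mu_0\in\Phi$ with $|\mu_0(X_0)|\ge c\|X_0\|$, where $c=c(\mathfrak g)>0$ is a root-system constant. Take $g'=u_{\mu_0}(r)k'$ with $|r|_w\in[q_w^{-m_0},1]$ and $k'\in K_w[m_0]$. A direct expansion of $\Ad(u_{\mu_0}(r))\Ad(k')X$ shows that its $\mu_0$-component equals $X_{\mu_0}-r\mu_0(X_0)E_{\mu_0}$ plus a second-order term coming from $X_{-\mu_0}$ of size $\lesssim|r|^2\|X\|$ and an error from $\Ad(k')$ of size $\lesssim q_w^{-m_0}\|X\|$. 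For $m_0$ large in terms of $c$ and $|\Phi|$, the dominant term has norm $\asymp|r|\,\|X\|$ and beats all three corrections, so by the strong triangle inequality the $\mu_0$-component has norm $\asymp|r|\,\|X\|\ge q_w^{-m_0}\|\Ad(g')X\|$. The set of such $(r,k')$ has measure $\asymp q_w^{-m_0\dim\G}$, again exceeding $q_w^{-3m_0\dim\G}$.

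\textbf{Main obstacle.} The delicate point is the non-archimedean bookkeeping in Case B: one must verify that the $\mu_0$-component freshly created by conjugation is not cancelled by the pre-existing small $X_{\mu_0}$, by higher-order BCH terms, or by the $k'$-perturbation. Everything is handled by the strong triangle inequality together with the smallness $\|X\|\le q_w^{-(2m_0+1)}$, after absorbing all Lie-algebra structure constants (including BCH denominators, which are meaningful thanks to the no non-standard isogeny hypothesis on $\G$) into the choice of $m_0$ as a function of $\dim\G$.
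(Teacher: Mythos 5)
Your underlying geometric idea is sound and genuinely different from the paper's: you split according to whether a root component of $g$ already dominates or whether the torus part does, and in the second case you manufacture a root component by conjugating with $u_{\mu_0}(r)$; since $1\in A_{-m_0}$ you observe that $K_w$ alone should carry the required measure. The paper instead works with the Iwahori factorization $g=g^-g^0g^+$ of \eqref{eq: Km decomp}, first reduces (via a rational-function argument in $r\mapsto(u_\lambda(r)gu_\lambda(-r))_\lambda$) to the case where some $g_\lambda$ is substantial, and then \emph{boosts} that component by conjugating with the non-compact torus element $a_\lambda(\varpi_w^{-c-1})\in A_{-m_0}$; after the boost the $\lambda$-component dominates outright, which is what makes the subsequent stability under $K_w[m_0+1]$-conjugation clean. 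So the set $K_wA_{-m_0}K_w$ in the statement is used essentially in the paper, whereas you avoid the boost altogether.

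However, the execution has a genuine gap: the entire argument is phrased through $X=\log g$, the weight decomposition $X=X_0+\sum X_\lambda$, and BCH estimates, but in positive characteristic the logarithm on $K_w[m]$ is not defined. The power series $\log(I+N)=N-\tfrac{N^2}{2}+\tfrac{N^3}{3}-\cdots$ contains coefficients $1/k$ with $p_w\mid k$, which are meaningless over $F_w$ with ${\rm char}(F_w)=p_w>0$ — no choice of $m_0$ fixes this, since the obstruction is the characteristic, not the radius of convergence. Your remark that the BCH denominators are ``meaningful thanks to the no non-standard isogeny hypothesis'' is a misattribution: that hypothesis concerns irreducibility of the adjoint module (it enters in Lemma~\ref{lem:effectively generation} via~\cite[Prop.~1.11]{Pink-Compact}) and has no bearing on $\log/\exp$ convergence. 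Consequently the identification $\|h_\lambda-I\|\asymp\|(\log h)_\lambda\|$, the expansion $\Ad(g')X=X+[\log g',X]+\cdots$ in Case~A, and the whole weight decomposition of $X$ are not available as stated. The steps that \emph{are} salvageable group-theoretically (e.g.\ $g'gg'^{-1}-I=g'(g-I)g'^{-1}$, so $\|\Ad(g')(g-I)-(g-I)\|\le\|g'-I\|\,\|g-I\|$ for $g'\in K_w[m_0]$) would need to be systematically separated from those that are not, and the $\lambda$-component of $g'gg'^{-1}$ as defined in the paper (via the Iwahori factorization and the one-parameter subgroups $U_\lambda$) would have to be related to whatever weight component of the matrix $g-I$ you use in place of $(\log g)_\lambda$; this is precisely the bookkeeping the paper's group-level argument is designed to avoid. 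As written, the proof does not go through.
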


\begin{proof}
We first note that for all $m\in\bbn$, all $g'\in K_wA_{-m}K_w$ and all $g\in K_w[2m+1]$, we have 
\[
g'gg'^{-1}\in K_w[1].
\]
Therefore $(g'gg'^{-1})_\lambda$ is defined.

Let us begin with the following claim: one can assume 
\begin{equation}\label{eq: g mu is substantial}
  \text{$\|g_\lambda-I\|\geq q_w^{-c} \|g-I\|\quad$ for some $\lambda\in \Phi$}  
\end{equation} 
where $c\in\bbn$ depends only on $\dim\G$. 

To see~\eqref{eq: g mu is substantial}, note that since $g\in K_w[1]$, we can write $g=g^-g^0g^+$ where $g^0\in A_1$, $g\in U_1$ and $g^-\in U^{-}_1$. 
Suppose now that~\eqref{eq: g mu is substantial} fails with $c=1$, then $\|g^0-I\|> \|g^\pm-I\|$. Let $\lambda\in \Phi$ be a simple root so that $|\lambda(g^0)|$ is maximal. The map 
\[
r\mapsto \bigl(u_\lambda(r)gu_\lambda(-r)\bigr)_{\lambda}
\]
is a rational function $\frac{f_1(r)}{f_2(r)}$ so that coefficients of $f_i$ are bounded by $\ll \|g-I\|$ and $|f_2(r)-1|\leq 1/2$ for all $|r|_w\leq 1$. Therefore, there exists some $r\in F_w$ with $|r|_w=1$ so that if we put $\hat g=u_{\lambda}(r)gu_\lambda(-r)$, then 
\[
\|\hat g_\lambda-I\|\gg \|\hat g-I\|.
\]
Now suppose the claim is proved for $\hat{g}$. For $\beta>0$ and $\star\in G$ put  
\[
E_\beta(\star)=\{g'\in K_wA_{-m_0}K_w: \|(g'\hat \star g'^{-1})_\lambda-I\|\geq \beta\|g'\star g'^{-1}-I\|\bigr\}.
\]
Then,
\[
E_\beta(\hat g)u_\lambda(r)\subset E_\beta(g)\quad\text{and} \quad |E_\beta(\hat g)u_\lambda(r)|=|E_\beta(\hat g)|\geq \beta^{3\dim\bG}.
\]

We thus assume that~\eqref{eq: g mu is substantial} is satisfied, and will show the lemma holds with $\lambda$ and $m_0=c+1$. For simplicity in the notation, let us assume $\lambda\in\Phi^+$, the argument when $\lambda\in\Phi^-$ is similar.  
Put $\tilde g=a_{\lambda}(\varpi_w^{-c-1})ga_{\lambda}(\varpi_w^{c+1})$. Then~$\tilde g\in K_w[1]$. 
Using \eqref{eq: Km decomp} and the decomposition of $U_1$ into one-dimensional unipotent subgroups, we can write
\[
g=g^{-}g^0h_1u_\lambda(s')h_2,\quad h_1, h_2\in U_1.
\]
We now want to look at the decomposition of  $\tilde{g}$ into a diagonal element and  elements of the one dimensional unipotent groups. 
Conjugation by $a_{\lambda}(\varpi_w^{-c-1})$ expend $u_\lambda(s')$ by $q_w^{2c+2}$, 
and for each other root $\sigma\neq\lambda$, it expand $g_\sigma$ by at most $q_w^{c+1}$.   
we conclude from~\eqref{eq: g mu is substantial} that $s=q_w^{2c+2}s'$ satisfies $u_\lambda(s)=\tilde{g}_\lambda$. Moreover, for $k$ such that $|s|_w=q^{-k}$, we have
\[
q_w^{-c-1}\leq |s|_w=q_w^{-k}\leq q_w^{2c+2}q_w^{-2m-1}=q_w^{-1},
\] 
and $\tilde g^0\in K_w[k+1]$, $\tilde g_{\sigma}\in K_w[k+1]$ for all $\sigma\neq \lambda$.

Since $K_w[k+1]$ is a normal subgroup of $K_w$, we have 
$\tilde g=h\tilde u_\lambda(s)$ where $h\in K_w[k+1]$. Moreover, for every $g_2\in K_w[k+1]$, we have 
\[
g_2\tilde gg_2^{-1}=h'' u_{\lambda}(s)
\]
where $h''\in K_w[k+1]$.

Altogether, there is some 
\[
g_1\in K_wA_{-c-1}K_w=K_wA_{-m_0}K_w
\]
so that for every $g_2\in K_w[m_0+1]$ we have 
$g_2g_1gg_1^{-1}g_2^{-1}\in K_w[1]$ and 
\[
 \|(g_2g_1gg_1^{-1}g_2^{-1})_\lambda -I\|\geq q_w^{-m_0}\|g-I\|.
\]
The proof is complete.
\end{proof}

\section{Construction of a good place}\label{sec: good place}
Recall that $Y_g=g\H(\adele)/\H(F)$, where $g=(1,(g_v))$.
In this section we show the existence of a place $w$ where the group 
\[
H_{g,w}=\Delta_g(G_w)=gH_wg^{-1}
\]
has {\em controlled geometry}.  

Recall the definition of $K_v$ and $K'_v$ from \S\ref{notation}. 
For every $v\in\Sigma$, let 
\begin{equation}\label{eq: def of KvY}
 K_{v,Y_g}:=H_{g,v}\cap K'_v=gH_vg^{-1}\cap K'_v=\Delta_g(K_v)\cap K'_v. 
\end{equation}
In this section, we will simply write $Y$ for $Y_g$, and denote $K_{v,Y_g}$ by $K_{v,Y}$.

\medskip

The following is the main result of this section.  

\begin{proposition}[Existence of a good place]\label{prop: splitting-place}
There exists a place $w$ of $F$ such that 
\begin{enumerate}
	\item $\G$ is split over~$F_w$. 
	\item $\pi_w(K_w')$ and $\pi_w(K_{w,Y})$ are hyperspecial subgroup of~$\G'(F_w)$ and $\pi_w(H_g)$, respectively,
	\item $g^w\in K_w'$, see \S\ref{sec:notation} for the notation. 
\item $q_w \ll \log(\vol Y))^{2}$ where the implied constant depends on $\G$.
\end{enumerate}
\end{proposition}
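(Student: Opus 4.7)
The plan is to reduce conditions (1)--(3) to a controlled finite bad set of places, and then locate a small-norm place in the complement via the function field prime number theorem combined with Chebotarev density.

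First I would fix a finite bad set $S_0 \subset \Sigma$ depending only on $\G$ and the embedding $\rho$, outside of which $\pi_v(K_v')$ is hyperspecial in $\G'(F_v)$. By Chebotarev applied to the splitting field of $\G$, the set of places $v \notin S_0$ at which $\G$ is $F_v$-split has positive Dirichlet density $\delta > 0$ depending only on the type of $\G$ and $F$. Condition (3) at $w$ amounts to $w \notin T_g$, where
\[
T_g := \{v \in \Sigma : g_v \notin K_v\}
\]
is finite because $g \in G^2$ is an adelic point. For any $w \notin S_0 \cup T_g$ at which $\G$ is split, condition (3) holds, and $g_w \in K_w$ gives $g_w K_w g_w^{-1} = K_w$, so
\[
K_{w,Y} = \{(h, g_w h g_w^{-1}) : h \in K_w\}
\]
maps isomorphically onto $K_w$ under $\pi_w(H_g) \simeq \G(F_w)$, yielding the second half of (2).

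The quantitative heart of the argument is the estimate
\[
\sum_{v \in T_g} \log q_v \ll \log \vol(Y_g),
\]
with implicit constant depending only on $\G$. I would prove this by factoring $\mathsf{m}_g = \prod_v \mathsf{m}_{g,v}$ so that $\vol(Y_g)^{-1} = \prod_v \mathsf{m}_{g,v}(K_v' \cap H_{g,v})$; under the identification $H_{g,v} \simeq \G(F_v)$ the local factor becomes the Haar measure of $K_v \cap g_v^{-1} K_v g_v$. Absorbing a finite set of centralizer/center-obstructed places into $S_0$, for each remaining $v \in T_g$ one has the uniform local index bound
\[
[K_v : K_v \cap g_v^{-1} K_v g_v] \geq q_v^{c}
\]
for some $c = c(\G) > 0$, which can be read off from the action of $g_v$ on the Bruhat--Tits building at $v$, using that $\G$ is simply connected and $K_v$ is the stabilizer of a hyperspecial vertex. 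Taking the logarithm and summing yields the displayed bound.

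Finally I would invoke the prime number theorem for the global function field $F$: the number of places with $q_v \leq Q$ is $\gg Q/\log Q$, and by Chebotarev at least a fraction $\delta$ of them have $\G$ split. Since $\log q_v \geq \log 2$ for every $v$, the previous bound also gives $|T_g| \ll \log \vol(Y_g)$. Setting $Q \asymp (\log \vol(Y_g))^2$ makes $\delta Q / \log Q - |S_0| - |T_g|$ positive for $\vol(Y_g)$ sufficiently large, producing a place $w$ satisfying (1)--(4); the finitely many small-volume cases are absorbed into the implicit constant. The main obstacle is the uniform local index estimate $[K_v : K_v \cap g_v^{-1} K_v g_v] \geq q_v^{c}$ for $v \in T_g$; extracting this clean exponent requires a Bruhat--Tits computation together with care regarding the center of $\G$, after which the location of a small-norm good place is routine function field sieving.
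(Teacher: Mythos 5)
Your proposal is correct and follows the same overall strategy as the paper: write $\vol(Y)$ as a product of local factors via the Tamagawa/Prasad volume formula, show that each ``bad'' place (where $g_v\notin K_v$) contributes a definite decrement to the local factor, and then locate a small split good place by combining the resulting bound on the bad set with the function field prime number theorem and Chebotarev density. Two remarks on how your route compares to the paper's. First, the local index bound $[K_v:K_v\cap g_v^{-1}K_vg_v]\geq q_v^{c}$ that you single out as the main obstacle is more than is needed; the paper uses only the index bound $\geq 2$, which follows immediately from the fact that at a place where $K_v$ is hyperspecial and $\G$ is simply connected, $K_v$ equals its own normalizer in $\G(F_v)$ (so $g_v\notin K_v$ forces $K_v\neq g_v^{-1}K_vg_v$). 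Since in the end you only invoke $|T_g|\ll\log\vol(Y)$ and discard the extra precision of $\sum_{v\in T_g}\log q_v\ll\log\vol(Y)$, you could replace your Bruhat--Tits estimate by the paper's weaker one and shorten the argument. (Your stronger bound is almost certainly true --- the intersection fixes two distinct hyperspecial vertices and hence lies in a proper parahoric of $K_v$, of index at least $q_v+1$ --- but it is unnecessary.) Second, the step ``factoring $\mathsf m_g=\prod_v\mathsf m_{g,v}$ so that $\vol(Y_g)^{-1}=\prod_v\mathsf m_{g,v}(K_v'\cap H_{g,v})$'' glosses over the convergence and normalization of the local Haar measures; this is exactly what Prasad's normalizing constants $c_v$ and the Tamagawa number $\tau(\G)=1$ take care of in the paper, producing the clean inequality $c_v|\omega_v|(K_{v,Y})\leq 1$ at good places (Lemma~\ref{lem: Kv volume contribution}) that makes the product manipulable. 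With those two points tightened up, your argument and the paper's coincide; the contrapositive phrasing in the paper versus your direct counting is an inessential difference.
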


Let us begin by recalling some standard facts from Bruhat-Tits theory.

\subsection{Bruhat-Tits theory.}
\label{ggo}
Let $\G,F$ as in our setting (see \S\ref{sec:notation}), and let $v\in\Sigma$. Then 
\begin{enumerate}
\item For any point $x$ in the Bruhat-Tits building of $\G(F_v)$, 
there exists a smooth affine group scheme $\Gfrak_v^{(x)}$ over 
$\order_v$, unique up to isomorphism, such that: 
its generic fiber is $\G(F_v)$, and the compact open subgroup $\Gfrak_v^{(x)}(\order_v)$ is the stabilizer of $x$ in $\G(F_v)$, see~\cite[3.4.1]{Ti}. 
\medskip 
\item If $\G$ is split over $F_v$ and $x$ is a {\it special} point, then
the group scheme $\mathfrak G_v^{(x)}$ is a Chevalley group scheme with
generic fiber $\G$, see~\cite[3.4.2]{Ti}.
\medskip 
\item $\red_v:\Gfrak_v^{(x)}(\order_v)\rightarrow\underline{\Gfrak_v}^{(x)}(k_v)$, the reduction mod $\varpi_v$ map,  
is surjective, { which follows from the smoothness above}, see~\cite[3.4.4]{Ti}. \medskip
\item $\underline{\mathfrak G_v}^{(x)}$ is connected and semisimple if and only if $x$ is a {\it hyperspecial} point.  Stabilizers of hyperspecial points in $\G(F_v)$ will be called hyperspecial subgroups, see~\cite[3.8.1]{Ti} and~\cite[2.5]{pr}.
\end{enumerate}

If $\G$ is quasi-split over $F_v$, and splits over $\widehat{F_v}$ (the maximal unramified extension of $F_v$), then hyperspecial vertices exist; and they are compact open subgroups with maximal volume. Moreover a theorem of Steinberg implies that $\G$ is quasi-split over $\widehat{F_v}$ for all $v$, see~\cite[1.10.4]{Ti}.

It is known that for almost all $v$, 
the groups $K_v$ are hyperspecial, see~\cite[3.9.1]{Ti} (and~\S\ref{notation} for the definition of~$K_v$). 
We also recall that: for almost all $v$ the group $\G$ 
is quasi-split over $F_v$, see ~\cite[Thm.~6.7]{PlRap}.

\subsection{Adelic volumes and Tamagawa number}\label{s;volume-form}
Fix an algebraic volume form $\omega$ on $\G$ defined over~$F$. 
The form $\omega$ determines a Haar measure on each vector space 
$\mathfrak{g}_{v} := \Lie(\G) \otimes F_{v}$
which also gives rise to a normalization of the Haar measure on $\G(F_{v})$.  
We denote both these measures by $|\omega_{v}|$, and
denote by $|\omega_{\adele}|$ the product measure on $\G(\adele_{F})$.
Then
\begin{equation}\label{eq: tamagawa}
|\omega_{\adele}|(\G(\adele_{F}))/\G(F))=D_{F}^{\frac{1}{2}\dim\G}\tau(\G),
\end{equation}
where $\tau(\G)$ is the {\em Tamagawa number} of $\G$, and $D_{F}$ is the discriminant of ${F}$. 
In the case at hand $\G$ is simply connected, it was recently proved that $\tau(\G)=1$, see~\cite[\S1.3]{GatsLurie}.

\subsection{The quasisplit form}\label{sec: volumes}
The volume formula~\eqref{eq: tamagawa} relates the Haar measure on $Y$ to the algebraic volume form $\omega$
(and the field~$F$). However, the volume of our homogeneous set $Y$ as a subset 
of $X$ depends heavily on the amount of distortion coming from $g$.

Following~\cite[Sect.~0.4]{pr}, we let $\mathcal G$ denote 
a simply connected algebraic group defined and quasi-split over $F$ which is an inner form of $\G$. 
Let $L$ be the field associated\footnote{In most
cases~$L$ is the splitting field of~$\mathcal G$ except in the case where~$\mathcal G$ is a triality
form of~${}^6\mathsf{D}_4$ where it is a degree~$3$ subfield\label{pagewithfootnote}
of the degree~$6$ Galois splitting field with Galois group~$S_3$. Note that there are three such
subfields which are all Galois-conjugate.} to $\mathcal G$ as in~\cite[Sect.~0.2]{pr},
it has degree~$[L:F]\leq 3$. We note that $\mathcal G$ should be thought of as the {\em least distorted 
version of} $\G$.

Let $\omega^0$ be a differential form on $\mathcal G$ corresponding to $\omega$. This can be described as follows: Let $\varphi:\G\rightarrow\mathcal G$ be an isomorphism 
defined over some Galois extension of $F$. 
We choose $\omega^0$ so that $\omega'=\varphi^*(\omega^0)$, it is defined over~$F$.
It is shown in~\cite[Sect.\ 2.0--2.1]{pr} that, up to a root of unity of order at most 3, this is independent of the choice of $\varphi$. 

As was done in~\cite{pr}, we now introduce local normalizing parameters $c_{v}$ which scale the volume form $\omega^0$ to a more canonical volume form on $\mathcal G(F_{v})$. 

For every place $v\in\Sigma$, choose an $\mathfrak{o}_{v}$-structure  on $\mathcal G$, i.e., a smooth affine group scheme over $\order_{v}$ with generic fiber $\mathcal G$ as in \cite[Sect.\ 1.2]{pr}.

Let $\{\mathcal P_{v} \subset \mathcal G(F_{v})\}$ denote
a coherent collection of parahoric subgroups with ``maximal volume'', see~\cite[Sect.\ 1.2]{pr} for an explicit description.  Let us recall that by a coherent collection we mean that $\prod_{v\in \Sigma}\mathcal P_v$ is a compact open subgroup of $\mathcal G(\adele_{F})$. 
Note that any $F$-embedding of $\mathcal G$ into ${\rm GL}_{N}$ 
gives rise to a coherent family of compact open subgroups of $\mathcal G(\adele_{F})$ which at almost all places satisfies the above requirements on $\mathcal P_v$, see \S\ref{ggo}.
At the other places we may choose $\mathcal P_v$ as above and then use (1) in \S\ref{ggo} to define the $\order_{v}$-structure on $\mathcal G(F_{v})$.
Let us also remark that maximality of the volume implies that the corresponding parahoric is either hyperspecial (if $\mathcal G$ splits over an unramified extension)
or special with maximum volume (otherwise).

This allows us, in particular, to speak of ``reduction modulo $\varpi_{v}$". 
For every place $v$ of $F$, we let $\overline{\mathcal M}_{v}$ denote the reductive quotient of $\red_{v}(\mathcal P_v)$; this is a reductive group over the residue field. 

Let $r_{v}\in F_{v}$ be so that $r_{v}\omega^0_{v}$ is 
a form of maximal degree, defined over $\order_{v}$, whose reduction mod $\varpi_{v}$ is non-zero, and 
let $c_{v}=|r_v|_{v}$. 

\subsection{Product formula}\label{sec: p-formula}
Let us use the abbreviation~$D_{L/F}=D_LD_{F}^{-[L:F]}$
for the norm of the relative discriminant of~$L/F$, see~\cite[Thm.~A]{pr}.
It is shown in~\cite[Thm. 1.6]{pr} that  
\begin{equation}\label{eq: prod-lambda}
\prod_{v\in\Sigma} c_{v} = 
D_{L/F'}^{\frac{1}{2}\mathfrak{s}(\mathcal G)}\cdot A,
\end{equation}
where $A>0$ depends only on~$\G$ over~$\bar{F}$ and~$[F:\Q]$, $\mathfrak{s}(\mathcal G)=0$ when $\mathcal G$ splits
over $F$ in which case $L=F$, and $\mathfrak{s}(\mathcal G)\geq5$ otherwise; these constants depend only on the root system of $\mathcal G$.

It should be noted that the parameters $c_{v}$ were defined using $\mathcal G$ and~$\omega^0$ but will be used to renormalize $\omega_{v}$ on $\G$, and hence on $\H$.

Abusing the notation, we will speak of hyperspecial subgroups of $G_w$, $G'_w$, and $H_{g,w}$ in the remaining parts of this section.  

Let $\Sigma'\subset \Sigma$ denote the set of places where 
$\G$ is isomorphic to $\mathcal G$ over $F_v$ and that $K_v$ is hyperspecial. Then $\Sigma\setminus \Sigma'$ is finite.

\begin{lemma}\label{lem: Kv volume contribution}
Let $v\in \Sigma'$. Then 
\begin{enumerate}
    \item $c_v|\omega_v|(K_{v,Y})\leq 1$.
    \item Assume $g^v\not\in K_v'$. Then 
\[
c_v|\omega_v|(K_{v,Y})\leq 1/2
\]
\end{enumerate}
\end{lemma}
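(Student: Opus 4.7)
The plan is to translate both claims into the single group $G_v$. Writing $g=(1,(g_v))\in G^2$ and setting $a_v:=g_v\in G_v$, one checks directly from $K_v'=K_v\times K_v$ that
\[
K_{v,Y}=\bigl\{(h,a_vha_v^{-1})\colon h\in K_v\cap a_v^{-1}K_va_v\bigr\}.
\]
The isomorphism $G_v\to H_{g,v}$, $h\mapsto g(h,h)g^{-1}$, therefore identifies $K_{v,Y}$ with $K_v\cap a_v^{-1}K_va_v\subseteq K_v$. Under this identification the measure $|\omega_v|$ on $H_{g,v}$ corresponds to $|\omega_v|$ on $G_v$, since $\bH$ is the image of an $F$-isomorphism from $\bG$ and conjugation preserves Haar measure. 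Note that $g^v\in K_v'$ is equivalent to $a_v\in K_v$.

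With these translations, part (1) reduces to the inequality $c_v|\omega_v|(K_v)\le 1$. For $v\in\Sigma'$ we have $\G\simeq\mathcal G$ over $F_v$ and $K_v$ hyperspecial, so the Bruhat--Tits/Serre normalization underlying Prasad's volume formula (\S\ref{sec: volumes}) gives
\[
c_v|\omega_v|(K_v)=q_v^{-\dim\G}\,\bigl|\overline{\mathcal M}_v(k_v)\bigr|,
\]
and the standard Steinberg point-count for the connected simply connected semisimple $k_v$-group $\overline{\mathcal M}_v$ of dimension $\dim\G$ yields $|\overline{\mathcal M}_v(k_v)|\le q_v^{\dim\G}$. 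Combined with the trivial inclusion $K_{v,Y}\subseteq K_v$ this establishes (1).

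For part (2) the hypothesis $g^v\notin K_v'$ becomes $a_v\notin K_v$. The extra ingredient I would invoke is that a hyperspecial parahoric in a simply connected semisimple group is self-normalizing, $N_{G_v}(K_v)=K_v$; see \cite[3.8.1]{Ti}. If we had $K_v\cap a_v^{-1}K_va_v=K_v$, then $K_v\subseteq a_v^{-1}K_va_v$, and equality of Haar volumes (by unimodularity of $G_v$) would force $a_v^{-1}K_va_v=K_v$, hence $a_v\in K_v$, a contradiction. Therefore $K_v\cap a_v^{-1}K_va_v$ is a proper open subgroup of the compact group $K_v$ and so has index at least $2$. Combined with (1):
\[
c_v|\omega_v|(K_{v,Y})\le\tfrac{1}{2}\,c_v|\omega_v|(K_v)\le\tfrac{1}{2}.
\]

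The only place requiring genuine care is the self-normalizing property invoked in part (2); everything else is a bookkeeping reduction to the Prasad--Bruhat--Tits normalization recalled in \S\ref{sec: volumes}.
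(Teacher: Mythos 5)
Your proof is correct and takes essentially the same route as the paper: translate $K_{v,Y}$ into the subgroup $K_v\cap a_v^{-1}K_va_v$ of $K_v$, invoke the Bruhat--Tits/Prasad normalization of $c_v|\omega_v|$ for part~(1), and use self-normalization of hyperspecial parahorics to get index~$\geq 2$ for part~(2). Your Steinberg point-count argument in part~(1) is in fact a bit more careful than the paper's, which asserts the equality $c_v|\omega_v|(K_v)=1$ where the correct (and sufficient) statement is the inequality $c_v|\omega_v|(K_v)\leq 1$ that you establish.
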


\begin{proof}
In view of the definition of $K_{v,Y}$, see~\eqref{eq: def of KvY}, we have 
\begin{equation}\label{eq: KvY lemma}
K_{v, Y}=\Delta_g(K_v)\cap K'_v.
\end{equation}

Since $v\in\Sigma'$, $\G$ is isomorphic to $\mathcal G$ over $F_v$ and we have $c_v|\omega_v|(K_v)=1$ which implies the first claim.

To see the second claim, first note that if $g^v$ does not normalize $K_v'$, then $g_v$ does not normalize $\pi_v^2(K_v)$, where $\pi_v^2: G_v\to \G(F_v)$ is the natural projection, see \S\ref{sec:notation}. In particular,   
\[
[\pi_v^2(K_v): \pi_v^2(K_v)\cap g_v^{-1}\pi_v^2(K_v)g_v]\geq 2.
\] 
Now since $v\in\Sigma'$, we have $\pi_v^2(K_v)$ is a hyperspecial vertex in $\G(F_v)$, $\pi_v^2(K_v)$ equals its normalizer in $\G(F_v)$; this and the above imply part~(2) in view of~\eqref{eq: KvY lemma}.

We recall the argument that $\pi_v^2(K_v)$ equals its own normalizer for the convenience of the reader; $\pi_v^2(K_v)$ is the stabilizer of a hyperspecial point, $\mathsf{x}_v$ say. Since $g_v$ normalizes $\pi_v^2(K_v)$, $\pi_v^2(K_v)$ also stabilizes of $g_v\mathsf{x}_v$. If $g_v\mathsf{x}_v\neq \mathsf{x}_v$, then $\pi_v^2(K_v)$ stabilizes the geodesic between these two points which is a contradiction.   
\end{proof}

\subsection{The volume of a homogeneous set}\label{sec: vol-hom}
In view of our definition of volume 
and taking into account the choice of $\Omega_0$, 
the equation~\eqref{eq: tamagawa} implies that  
\begin{equation}\label{eq: vol-est}
\vol(Y)  =  D_{F}^{\frac{1}{2}\dim\G} \prod_{v\in\Sigma} 
\left(|\omega_v|(K_{v,Y})\right)^{-1},
\end{equation}
where $|\omega_v|$ is as before. This and~\eqref{eq: prod-lambda} imply that
\begin{equation}\label{eq;volume-upperbd}
\vol(Y) =A D_{L/F}^{\frac{1}{2}\mathfrak{s}(\mathcal G)}D_{F}^{\frac{1}{2}\dim\G} \prod_{v\in\Sigma}\Big(c_v|\omega_v|(K_{v,Y})\Big)^{-1}.
\end{equation}
Let us note the rather trivial consequence\footnote{This would also follow trivially
from the definition if only we would know that the orbit intersects a fixed compact subset.}~$\vol(Y)\gg 1$ of~\eqref{eq;volume-upperbd}. 
Below we will assume implicitly~$\vol(Y)\geq 2$ (which we may achieve by replacing~$\Omega_v$ by a smaller neighborhood 
at one place in a way that depends only on~$\G$).

\begin{proof}[Proof Proposition~\ref{prop: splitting-place}]
Recall that $\Sigma'\subset \Sigma$ denotes the set of places where 
$\G$ is isomorphic to $\mathcal G$ over $F_v$ and that $K_v$ and hence $K'_v=\{(\disg_1,\disg_2): (\disg_i,1)\in K_v\}$ is hyperspecial. 
Furthermore, if $g^v\in K_v'$, then 
\[
K_{v, Y}=\Delta_g(K_v)\cap K'_v
\]
is hyperspecial in $H_{g,v}$.

Now recall from Lemma~\ref{lem: Kv volume contribution} that for all places $v\in\Sigma'$, we have $c_v|\omega_v|(K_{v,Y})\leq 1$.
Moreover, if $v\in \Sigma'$ 
but $g^v\not\in K_v'$, then 
\begin{equation}\label{eq: index Kv cap K'v}
c_v|\omega_v|(K_{v,Y})\leq 1/2.
\end{equation}

By Chebotarev density theorem, the set 
\[
\Sigma''=\{v\in\Sigma': \G\text{ is split over $F_v$}\}
\]
has positive density, $c>0$ say, which depends only on $\G$. 

Let $B$ be a constant depending on $\G$ which will be explicated later. 
Assume now (for a contradiction) that for every $v\in \Sigma''$ with $q_v\leq Bc^{-1}\log(\vol(Y))^2$, 
we have $c_v|\omega_v|(K_{v,Y})^{-1}\geq 2$. 

Recall also that we are assuming $\vol(Y)\geq 2$. The above discussion,~\eqref{eq;volume-upperbd}, 
and the prime number theorem imply that 
\[
\vol(Y)\geq B' 2^{B\log(\vol(Y))^2/(\log B -\log c+2\log\log(\vol(Y)))}
\]
where $B'$ depends only on $\G$.
Now if $B\gg_\G 1$, then 
\[
B' 2^{B\log(\vol(Y))^2/(\log B -\log c+2\log\log(\vol(Y)))}>\vol(Y).
\]
This contradiction finishes the proof. 
\end{proof}

\section{Deduction of Theorem \ref{thm: property tau} from Theorem \ref{thm;equidistribution}}

In this section, we complete the proof of Theorem~\ref{thm: property tau}. The proof relies on Theorem~\ref{thm:prop-tau} below, which in turn is a consequence of Theorem~\ref{thm:main}. It should be noted, however, that the proof of Theorem~\ref{thm:main} requires a certain {\em uniform} decay of matrix coefficients for $H_{g,w}$ where $w$ is a good place, see~\S\ref{sec:sob-intro}. In the setting of Theorem~\ref{thm:prop-tau}, the group $H_{g,w}$ has Kazhdan's property $(T)$, thus the required uniform rate follows from the work of Oh~\cite{HeeOh-T}. 

\medskip

As before, we let $\pi^1$ denote the projection onto the first factor. 
Throughout this section, the space $\G(\adele)/\G(F)$
is equipped with the $\G(\adele)$-invariant probability measure, we thus drop this measure from the notation.

\begin{theorem}\label{thm:prop-tau}
    Assume the absolute rank of $\G$ is at least 2. There exists $\constk\label{k:thm-tau}>0$ which satisfies the following.
	Let $f_1, f_2\in L^2_0(\G(\adele)/\G(F))$ ---
	the orthogonal complement of $\G(\adele)$-invariant functions --- be $\pi^1(K_v)$-finite functions. Then  
	\[
	\left|\langle \disg.f_1,f_2\rangle\right|\le\dim\langle
	 \pi^1(K_v).f_1\rangle^{1/2}\dim\langle
	 \pi^1(K_v).f_2\rangle^{1/2}\Xi_{\G(F_v)}(\disg)^{-\ref{k:thm-tau}}\norm{f_1}_2\norm{f_2}_2,\]
	where $\disg\in\G(F_v)$, $\langle \pi^1(K_v).f\rangle$ is the linear span of $\pi^1(K_v).f$, and $\Xi_{\G(F_v)}$ is the Harish-Chandra spherical function of $\G(F_v)$. 
\end{theorem}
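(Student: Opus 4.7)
The plan is to realize the matrix coefficient $\langle\disg.f_1,f_2\rangle$ as the period of an $L^2_0$-function along a diagonal orbit in $X$, and then import the effective estimate of Theorem~\ref{thm:main}. Using the factorization $X=G'/\G'(F)\simeq\bigl(\G(\adele)/\G(F)\bigr)^2$, set $F:=f_1\otimes\overline{f_2}$; since $f_1,f_2\in L^2_0$ we have $F\in L^2_0(X)$, in particular $\int_X F\diff m_X=0$. Following the normalization $g\in G^2$ of \S\ref{sec:notation}, take $g=(1,\iota_v(\disg))\in G'$. Unfolding $Y_g=gH/\H(F)$ shows it is parametrized by $h\in\G(\adele)/\G(F)$ via $h\mapsto(h,\iota_v(\disg)h)$, so that
\[
\int_{Y_g}F\diff\mu_g=\int_{\G(\adele)/\G(F)}f_1(h)\,\overline{f_2(\iota_v(\disg)h)}\diff h=\langle\disg.f_1,f_2\rangle.
\]

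Theorem~\ref{thm:main} then yields $|\langle\disg.f_1,f_2\rangle|\ll\vol(Y_g)^{-\kappa}\sob(F)$. Next, one bounds $\sob(F)$ for $K_v$-finite tensors. Decomposing each $f_i$ into isotypic components under $\pi^1(K_v)$ and exploiting multiplicativity of the adelic Sobolev norm across the two factors of $X$, a standard smoothing estimate (differentiating a $K_v$-finite vector a bounded number of times introduces at worst a factor polynomial in $\dim\langle\pi^1(K_v).f_i\rangle$) produces
\[
\sob(F)\;\ll\;\bigl(\dim\langle\pi^1(K_v).f_1\rangle\dim\langle\pi^1(K_v).f_2\rangle\bigr)^{1/2}\norm{f_1}_2\norm{f_2}_2.
\]

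It remains to convert $\vol(Y_g)$ into a power of $\Xi_{\G(F_v)}(\disg)$. Since $g$ is trivial away from $v$, the defining identity \eqref{eq: def volume} collapses, up to constants depending only on $\G$ and $F$, to $\vol(Y_g)\asymp\mu_v(K_v\cap\iota_v(\disg)^{-1}K_v\iota_v(\disg))^{-1}$. Writing $\disg=k_1ak_2$ in Cartan form at $v$ and invoking the standard identity $\mu_v(K_v\cap a^{-1}K_va)^{-1}\asymp\mu_v(K_vaK_v)/\mu_v(K_v)$ reduces matters to the classical estimate $\mu_v(K_vaK_v)/\mu_v(K_v)\gg\Xi_{\G(F_v)}(a)^{-2}(1+\|\chi(a)\|)^{-N_0}$; absorbing the polynomial factor in $\chi(a)$ into the exponent yields $\vol(Y_g)^{-\kappa}\ll\Xi_{\G(F_v)}(\disg)^{2\kappa-\epsilon}$ for any $\epsilon>0$, so the theorem holds with $\kappa_0$ any positive constant slightly smaller than $2\kappa$. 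The hardest step in this plan is this local volume/$\Xi$ comparison, where the polynomial factor from the spherical-function asymptotic must be absorbed uniformly across all places $v$ at which $\G$ is isotropic; everything else is essentially bookkeeping once the normalizations of \S\ref{sec:notation}--\S\ref{sec: vol-hom} are fixed.
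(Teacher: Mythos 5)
Your opening reduction is exactly the paper's: setting $g=(1,\iota_v(\disg))$, writing $F=f_1\otimes\overline{f_2}$, and observing that $\int_{Y_g}F\diff\mu_g=\langle\disg.f_1,f_2\rangle$ so that Theorem~\ref{thm:main} gives $|\langle\disg.f_1,f_2\rangle|\ll\vol(Y_g)^{-\kappa}\Sob(F)$. From that point on, however, you diverge from the paper and hit a genuine gap.

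Your central claimed estimate $\Sob(F)\ll\bigl(\dim\langle\pi^1(K_v).f_1\rangle\dim\langle\pi^1(K_v).f_2\rangle\bigr)^{1/2}\norm{f_1}_2\norm{f_2}_2$ is false. The adelic Sobolev norm \eqref{Sobnormdef} is $\Sob_d(f)^2=\sum_{\m}\|\m\|^d\|\operatorname{pr}[\m](1+\height(x))^d f(x)\|_2^2$: it involves projections $\operatorname{pr}_u[m]$ at \emph{every} place $u$, not just $u=v$, and it carries the height weight $(1+\height(x))^d$. A vector $f\in L^2_0$ that is merely $\pi^1(K_v)$-finite has controlled isotypic structure only at the place $v$; its level at the remaining places and its growth relative to $\height$ are unconstrained, and $\Sob_d(f)$ can easily be infinite. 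So there is no smoothing estimate of the kind you invoke, and your proof cannot close. This is in fact the hard point of the deduction, not the spherical-function bookkeeping you flag at the end.

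The paper sidesteps precisely this difficulty by not attempting to bound $\Sob(F)$ at all. It first applies Theorem~\ref{thm:main} to $f_i\in C_c^\infty$ to conclude $\bigl|\langle g.f_1,f_2\rangle-\int f_1\int\bar f_2\bigr|\ll\|g\|^{-\kappa}\Sob(f)$ (note the implied constant still depends on the $F$-structure of $\G$). It then invokes the Cowling--Haagerup--Howe mechanism \cite{CHH1988}: once one knows polynomial decay of matrix coefficients for a dense family of smooth vectors, the decay self-improves to the clean, uniform bound with $\dim\langle\pi^1(K_v).f_i\rangle^{1/2}$ factors and the Harish-Chandra function $\Xi_{\G(F_v)}$, with an exponent $\kappa_0$ independent of the $F$-form. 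The CHH step accomplishes simultaneously what your two remaining steps try (and fail) to do directly: converting the Sobolev norm to an $L^2$ norm times a $K_v$-dimension factor, converting the $\vol(Y_g)$-decay to a $\Xi$-decay, and removing the dependence on the $F$-structure. I'd also flag that the paper's proof is explicit about feeding in the good-place construction (Proposition~\ref{prop: splitting-place}) together with Oh's uniform property $(T)$ estimate \eqref{eq:oh-mc} as the spectral-gap input to Theorem~\ref{thm:main}; your proposal treats Theorem~\ref{thm:main} as a black box, which is fine in itself, but worth keeping in mind given the logical structure.

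Your local volume computation (using the Cartan decomposition of $\disg$ and comparing $\vol(Y_g)$ with the inverse measure of $K_v\cap\disg^{-1}K_v\disg$, then with $\mu_v(K_v\disg K_v)/\mu_v(K_v)$ and with $\Xi(\disg)^{-2}$ up to polynomial factors) is a reasonable computation, but it is unnecessary in the paper's argument; CHH already produces the $\Xi$-shape of the final bound. To repair your proposal you should replace the Sobolev estimate and the $\vol\!\leftrightarrow\!\Xi$ comparison with the CHH upgrade.
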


Let us first assume Theorem~\ref{thm:prop-tau} and finish the proof of Theorem~\ref{thm: property tau}. 

\begin{proof}[Proof of Theorem~\ref{thm: property tau}]
First note that the theorem in the case where $\G$ is of type $A_1$ follows from~\cite{Drinfeld}. We may, therefore, assume that the absolute rank of $\G$ is at least 2. 

By \cite{CHH1988}, see also~\cite[Lemma 3.22]{Gorodnik-Maucourant-Oh-adelic}, 
we may restrict ourselves to smooth functions in $L^2_0(\G(\adele)/\G(F))$. 
Now by \cite[Thm.~2 and Cor.]{CHH1988}, Theorem \ref{thm: property tau} is equivalent to the existence of a uniform decay rate for matrix coefficients for $\pi^1(K_v)$-finite functions in $L^2_0(\G(\adele)/\G(F))$ as claimed in Theorem~\ref{thm:prop-tau}. 
\end{proof}

\begin{proof}[Proof of Theorem~\ref{thm:prop-tau}]
Let $g=(1, (g_u))\in G'$ where $g_{v}=\disg$ and $g_{u}=1$ for all $u\neq v$.

In view of our definition of the volume of a homogeneous set,
there exist positive constants~$\constk\label{k:kapppa1}$ and $\constk\label{k:kapppa2}$ (depending only on the root system of~$\bG(F_v)$) such that  
\[
\|g\|^{\ref{k:kapppa1}}\ll \vol(Y_g)\ll \|g\|^{\ref{k:kapppa2}}.
\]

Let $w$ be a place obtained by applying Proposition~\ref{prop: splitting-place} to $Y_g$. 
    In particular, 
    \begin{enumerate}
	\item $\G$ is split over~$F_w$. 
	\item $K_w'$ and $K_{w,Y}$ are hyperspecial subgroup of~$G_w$ and $H_w$, respectively 
\item $q_w \ll \log(\vol Y_g))^{2}$ where the implied constant depends on $\G$.
    \end{enumerate}
    
Then, by \cite[Thm.~1.1]{HeeOh-T}, and our assumption that the absolute rank of $\bG$ is at least two, the group $G_w$ has property $(T)$. 
Therefore, there exists a constant $\constk\label{k:propT}$, (which depends only on the type of $\G$) so that for all  $K_w$-finite functions
$f_1,f_2\in L^2_0(\G(\adele)/\G(F))$ and all $h_w\in G_w$, we have
\begin{equation}\label{eq:oh-mc}
|\langle h_w.f_1,f_2\rangle|\ll \dim\langle \pi_1(K_w).f_1\rangle^{1/2}\dim \langle \pi_1(K_w).f_2\rangle^{1/2}\|f_1\|_2\|f_2\|_2\|h_w\|^{-\ref{k:propT}}
\end{equation}
where the implicit constant depends on $G_w$. If ${\rm char}(F)>2$ this is stated in~\cite[Thm.~1.1]{HeeOh-T}. A careful examination of the proof shows that \eqref{eq:oh-mc} holds when ${\rm char}(F)=2$ as well, see~\cite[\S 4.7]{HeeOh-T}.  

Using~\eqref{eq:oh-mc} as an input in the proof of
Theorem~\ref{thm:main}, see~\S\ref{sec:sob-intro}, we conclude from Theorem~\ref{thm:main} that if $f=f_1\otimes\bar f_2$
with $f_i\in C_c^\infty(\G(\adele)/\G(F))$ for $i=1,2,$ then 
\begin{align*}
\Big|\int_{X} f\operatorname{d}\!\mu_{Y_g}&-\int_{X} f_1\otimes\bar f_2\operatorname{d}\!m_X\Big|\\
&=\Big|\langle g.f_1, f_2\rangle_{\G(\adele)/\G(F)}-\int_{\G(\adele)/\G(F)} f_1\int_{\G(\adele)/\G(F)}\bar f_2\Big|\ll \|g\|^{-\kappa}\Sob(f).
\end{align*}

The implied multiplicative constant depends on~$\G(\adele)/\G(F)$ and so also on the~$F$-structure of~$\G$.
We note however, that thanks to~\cite{CHH1988}, 
the above upgrades to a uniform effective bound on the decay of the matrix coefficients as in Theorem~\ref{thm:prop-tau}
with $\ref{k:thm-tau}$ independent of the $F$-form of~$\G$. This implies Theorem~\ref{thm:prop-tau}.
\end{proof}

\section{Effective generation of a group at a good place}\label{sec:effective generation}
In this section, we fix a good place $w$ and show some useful properties of $G_w$, 
see \S \ref{sec: good place} for the definition and the proof of the existence of a good place.   
In particular, $\G$ is split over $F_w$ and we can apply the notation and results from \S\ref{sec: split Lie-alg}. 

Recall that $\frako_w$ is the maximal compact subring of $F_w$, $\varpi_w$ is a uniformizer of $\frako_w$, 
and $\pi^1(K_w)=\mathbf{G}(\frako_w)$. 
For any $L>0$ and $R\subset G$, denote by $R[L]$ the set of elements in $R$ whose entries vanish in $\frako_w/\varpi_w^L\frako_w$. 
For any $L>0$ and $\frakr\subset\frakg$ denote by $\frakr[L]$ the set of elements in $\frakr$ whose entries have valuation at least $L$.

For every $\lambda\in\Phi$, let $A_\lambda$ be the scheme theoretic split torus in $\langle U_\lambda,U_{-\lambda}\rangle	\cong\operatorname{SL}_2$ which we realize as a closed $\bbz$-subscheme of $\G$. Put
	\begin{equation}\label{eq: definition of A'}
	A'_\lambda=\Delta_g(A_\lambda) \subset H_g
	\end{equation}
	the elements in $A'_\lambda$ will be denoted by $a_\lambda'(t)$.
	Similarly, put $U'_{\pm\lambda}=\Delta_g(U_{\pm\lambda})\subset H_g$ and their elements by $u'_{\pm\lambda}(s)$. 
	
	Note that for every $\disg\in G_w$, $t\in F_w^\times$, and $s\in F_w$, we have 
	\begin{equation}\label{eq: a and a' conjugation}
	   a'_\lambda(t)\disg a'_\lambda(t)^{-1} = a_\lambda(t)\disg a_\lambda(t)^{-1}\;\text{ and }\;\; u'_\lambda(s)\disg u'_\lambda(-s) = u_\lambda(s)\disg u_\lambda(-s).
	\end{equation}

For $b\in F_w$ and $r>0$, let 
\[
B(b,r):=\left\{s\in F_w:|b-s|_w<r\right\}. 
\]

\begin{lem}\label{lem:implicit function}
There exists an absolute constant $\constk\label{k:implicit func}$ such that the following holds. 
\begin{enumerate}
    \item Assume that ${\rm char}(F)>2$. For any $\lambda\in\Phi$ and every $|s|_w=1$, we have 
    \[
U_\lambda[\ref{k:implicit func}]\subset \{u_\lambda(r^2-s^2):|r|_w=1\}.
\]
\item Assume that ${\rm char}(F)=2$. Then 
\[
\{u_\lambda(c^2): |c|_w\leq1\}\subset \{u_\lambda(r^2-s^2):|r|_w=1\}.
\]
\end{enumerate}
\end{lem}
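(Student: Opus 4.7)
The plan is to reduce both parts of the lemma to the scalar problem of solving $r^2 - s^2 = t$ in $F_w$ subject to $|r|_w = 1$. With such a reduction, the asserted set inclusions will follow by picking $\ref{k:implicit func}$ large enough that $U_\lambda[\ref{k:implicit func}]$ consists precisely of those $u_\lambda(t)$ with $|t|_w$ below a threshold depending only on the fixed embedding $\rho : \G \to \SL_N$, since $u_\lambda$ is polynomially parametrized and hence the threshold is independent of $w$. Part (1) will be handled by Hensel's lemma, which is available in residue characteristic $\neq 2$; part (2) will collapse via an algebraic identity specific to characteristic $2$.

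For part (1), I would fix $s$ with $|s|_w = 1$ and a target $t$ with $|t|_w < 1$, then apply Hensel's lemma to $f(x) = x^2 - s^2 - t$. Here $f(s) = -t$ and $f'(s) = 2s$; because $\mathrm{char}(F) > 2$, the element $2$ is a unit in $\order_w$, hence $|f'(s)|_w = 1$. The Hensel criterion $|f(s)|_w = |t|_w < |f'(s)|_w^2 = 1$ is then automatic, producing some $r \in \order_w$ with $r^2 - s^2 = t$ and $|r - s|_w \leq |t|_w < 1$, so in particular $|r|_w = |s|_w = 1$, as required.

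For part (2), I would exploit the identity $r^2 - s^2 = r^2 + s^2 = (r + s)^2$ that holds because $\mathrm{char}(F) = 2$. Given $c$ with $|c|_w \leq 1$, simply take $r = 1$ and $s = 1 + c$. Then $|r|_w = 1$, and
\[
r^2 - s^2 = 1 - (1 + c)^2 = 1 - (1 + c^2) = c^2,
\]
where the middle equality uses $(1 + c)^2 = 1 + c^2$ in characteristic $2$. This exhibits $u_\lambda(c^2)$ as $u_\lambda(r^2 - s^2)$ with $|r|_w = 1$.

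The only real obstacle is conceptual: in characteristic $2$, Hensel's lemma for $x^2 - s^2 - t$ fails because $f'(s) = 2s = 0$, so the analytic strategy from part (1) cannot be transplanted. The remedy is that in the very same characteristic, the difference of squares degenerates to a single square $(r+s)^2$, replacing analytic approximation by an elementary algebraic construction.
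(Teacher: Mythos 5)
Your proof is correct and follows essentially the same route as the paper: for part (1) you invoke Hensel's lemma, which is exactly the non-archimedean implicit function theorem the paper cites (applied to $\psi_s(t,r)=t-(r^2-s^2)$ with derivative $-2s$ a unit), and for part (2) you use the same characteristic-$2$ collapse of $r^2-s^2$ into a single square. If anything, the Hensel phrasing is slightly cleaner since it makes the admissible radius ($|t|_w<1$, hence $\ref{k:implicit func}=1$ suffices) explicit.
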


\begin{proof} 
    Let us first assume that ${\rm char}(F)>2$. Let $\psi_s:F_w^2\rightarrow F_w$ be defined by $\psi_s(t,r)=t-(r^2-s^2)$. Then, 
	\[
	\frac{\partial \psi_s}{\partial r}(0,1)=-2\neq 0.
	\] 
	Hence, by the implicit function theorem (see \cite[\S 4]{IFT}), there exists $0<\alpha< 1$, such that for all $t\in B(0,\alpha^2)$ there exists $r\in B(s,\alpha)$ so that $\psi_s(t,r)=0$, i.e., $t=r^2-s^2$. 
	This implies the first claim with $\ref{k:implicit func}=\alpha^2$.  
	
	If ${\rm char}(F)=2$, then $r^2-s^2=(r-s)^2$ which implies the claim.  
\end{proof}

\begin{lem}\label{lem:effectively generation}
Recall that $\G$ does not admit any non-standard isogenies.
There exist $\constk\label{k: Gw generation}$ 
and $\ell\geq 1$ with the following property. Let $\lambda\in\Phi$. 
Then, there exist $h_1,\ldots, h_\ell\in H_{g,w}$ with $\|h_i\|\leq 1$ so that 
every $\disg\in K_w[\ref{k: Gw generation}]$ can be written as
\[
\disg = \disg_1\disg_2\cdots \disg_{\ell}, \]
where for any $1\le i\le \ell$, $\disg_i\in h_i U_\lambda[0]h_i^{-1}$. 
\end{lem}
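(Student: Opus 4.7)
The plan is to apply the Iwahori decomposition~\eqref{eq: Km decomp} to $K_w[\kappa]$ and reduce the claim to handling one-parameter factors coming from root subgroups and the split torus.

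First, since Proposition~\ref{prop: splitting-place} gives $g^w \in K_w'$, the component $g_w^{(2)}$ lies in $\G(\frako_w)$. Consequently, for every $\tilde h \in \G(\frako_w)$, the diagonal lift $\Delta_g(\tilde h) = (\tilde h,\, g_w^{(2)} \tilde h (g_w^{(2)})^{-1}) \in H_{g,w}$ satisfies $\|\Delta_g(\tilde h)\| \le 1$, and its conjugation action on $U_\lambda[0] \subset G_w$ coincides with conjugation by $\tilde h$ in the first factor. Thus the lemma reduces to producing $\tilde h_1,\ldots,\tilde h_\ell \in \G(\frako_w)$ such that every $\disg \in K_w[\kappa]$ factors as $\prod_{i=1}^\ell \tilde h_i u_\lambda(s_i) \tilde h_i^{-1}$ with $s_i \in \frako_w$.

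By~\eqref{eq: Km decomp} together with the bijective product maps $\prod_{i=1}^d U_{\pm \lambda_i}[\kappa] \to U_\kappa^\pm$ from~\cite[\S 3.1.1]{Ti}, any $\disg \in K_w[\kappa]$ is a product of a bounded number of factors, each of one of two types: (a)~$u_\mu(t)$ for some $\mu \in \Phi$ and $t \in \varpi_w^\kappa\frako_w$, or (b)~$a_\mu(t)$ for $\mu$ a simple root and $t \in 1 + \varpi_w^\kappa\frako_w$. For type~(a), I would construct $h_\mu \in \G(\frako_w)$ satisfying $h_\mu u_\lambda(s) h_\mu^{-1} = u_\mu(c_\mu s)$ for some unit $c_\mu \in \frako_w^\times$, so that $u_\mu(t) = h_\mu u_\lambda(c_\mu^{-1} t) h_\mu^{-1} \in h_\mu U_\lambda[0] h_\mu^{-1}$. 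When $\mu$ lies in the Weyl orbit of $\lambda$, $h_\mu$ is a product of Steinberg representatives $n_\alpha = u_\alpha(1) u_{-\alpha}(-1) u_\alpha(1)$; when $\mu$ lies in a different Weyl orbit (so has a different length from $\lambda$), an additional unipotent conjugation coming from the Chevalley commutator formula $[u_\alpha(s), u_\beta(t)] = \prod_{i,j\ge 1} u_{i\alpha+j\beta}(c_{ij}^{\alpha\beta} s^i t^j)$ is inserted.

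For type~(b), Steinberg's relation in the rank-one subgroup $L_\mu = \langle U_\mu, U_{-\mu}\rangle$ gives
\[
a_\mu(t) = u_\mu(t)\, u_{-\mu}(-t^{-1})\, u_\mu(t)\, u_\mu(-1)\, u_{-\mu}(1)\, u_\mu(-1),
\]
which is a product of six factors of type~(a) since $t, t^{-1} \in \frako_w^\times$. Concatenating, every $\disg \in K_w[\kappa]$ becomes a product of at most $\ell = O(\dim\G)$ factors of the required form, each $\tilde h_i$ chosen from a bounded collection depending only on the type of $\G$. The main obstacle is the length-changing subcase of step~(a): the relevant Chevalley structure constant $c_{ij}^{\alpha\beta}$ must not vanish modulo the residue characteristic $p_w$. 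This is exactly where the no-non-standard-isogeny hypothesis enters, since it excludes precisely the characteristics where these constants vanish (char~$2$ for $B_n, C_n, F_4$; char~$3$ for $G_2$).
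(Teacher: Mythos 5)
Your approach differs genuinely from the paper's. The paper works at the Lie algebra level: it cites Pink (\cite[Prop.~1.11]{Pink-Compact}) to produce $h_1',\ldots,h_{\dim\G}'\in H_{g,w}$ with $\{\Ad(h_i')\mathfrak u_\lambda\}$ spanning $\gfrak_w$, then applies Borel--Springer \cite[Prop.~1.16]{BoSp-Rationality} to get a separable surjective product morphism $\prod_j U_j\to G_w$, and finally the implicit function theorem to get the effective radius $\ref{k: Gw generation}$. You instead use the Iwahori decomposition and try to hit each root-group and torus factor explicitly with Chevalley relations. This is a more concrete route, but there is a genuine gap.

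The gap is in your type~(a) step when $\mu$ has a different length from $\lambda$. Your claimed identity $h_\mu u_\lambda(s)h_\mu^{-1}=u_\mu(c_\mu s)$ for some $h_\mu\in\G(\frako_w)$ cannot hold: root subgroups of different lengths are never $G$-conjugate, because their root vectors lie in distinct nilpotent orbits (in type $C_2$, for instance, a long-root vector lies in the minimal orbit, Jordan type $[2,1,1]$, while a short-root vector has Jordan type $[2,2]$). Saying ``an additional unipotent conjugation coming from the Chevalley commutator formula is inserted'' does not repair this: conjugating $u_\lambda(s)$ by a unipotent $u_\beta(c)$ produces, by the very commutator formula you quote, a \emph{product} $u_\lambda(s)\prod_{i,j\geq 1}u_{i\lambda+j\beta}(\cdots)$, and the resulting curve in $s$ is not contained in any single $U_\mu$. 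What one must instead do is multiply several such conjugates and cancel the unwanted factors. For example, in $B_2$ with $\lambda=\alpha_1$ long and target $\alpha_1+\alpha_2$ short, one solves for $c,c',s,s'\in\frako_w$ in
\[
\bigl(u_{\alpha_2}(c)\,u_{\alpha_1}(s)\,u_{\alpha_2}(-c)\,u_{\alpha_1}(-s)\bigr)\bigl(u_{\alpha_2}(c')\,u_{\alpha_1}(s')\,u_{\alpha_2}(-c')\,u_{\alpha_1}(-s')\bigr)^{-1}=u_{\alpha_1+\alpha_2}(1),
\]
a product of eight elements each belonging to a conjugate of $U_\lambda[0]$; the resulting equations are solvable in $\frako_w$ precisely because the relevant Chevalley constant ($2$ here) is a unit in good characteristic. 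This cancellation is the missing content, and it is where the no-non-standard-isogeny hypothesis actually does its work. You correctly identified the source of the characteristic constraint, but the mechanism you describe to reach it --- conjugating $U_\lambda$ onto $U_\mu$ --- does not exist, so as written the proof does not go through for non-simply-laced $\G$.
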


\begin{proof}
Clearly it suffices to prove this for a fixed $\lambda\in\Phi$. 
First note that since $G=G^1$ (see \S\ref{sec:notation}) for every $h=(h^1,h^2)\in H_{g,w}$, we have 
\[
hU_\lambda h^{-1}=(h^{1},1)U_\lambda (h^{1},1)^{-1}\subset G_w\quad\text{and}\quad \Ad(h)\mathfrak u_\lambda=\Ad((h^{1},1))\mathfrak u_\lambda\subset \gfrak_w.
\]

Since $\G$ is simply connected and does not admit any non-standard isogenies, there exist 
$h'_1,\ldots,h'_{\dim\G}\in H_{g,w}$ so that the $F_w$-span of   $\{\Ad(h'_i)\mathfrak u_\lambda\}$ equals $\gfrak_w$, see \cite[Prop.~1.11]{Pink-Compact} and references therein. 
Moreover, since $K_w$ is Zariski dense in $G_w$, we can choose $h'_i$ so that $\|h'_i\|\leq 1$ for all $i$.

In consequence, by \cite[Prop.~1.16]{BoSp-Rationality} applied  with 
$\{h_i'U_\lambda {h_{i}'}^{-1}: 1\leq i\leq \dim\mathbf G\}$
and $G_w$, we conclude that the product map 
\[
\varphi:\prod_{j=1}^\ell U_j\to G_w
\]  
is a separable surjective morphism, where $\ell$ is absolute and $U_j\in\{h'_iU_\lambda {h'_{i}}^{-1}: 1\leq i\leq \dim\mathbf G\}$ for every $1\leq j\leq \ell$.   

Thus, by the implicit function theorem (see \cite[\S3]{IFT}), there exists $\ref{k: Gw generation}$ so that 
\[
G_w[\ref{k: Gw generation}]\subset\varphi\Bigl(\textstyle\prod_{j=1}^\ell U_j[0]\Bigr)
\]
where $U_j[0]:=h'_jU_\lambda[0] {h'_{j}}^{-1}$. The proof is complete. 
\end{proof}

\begin{remark}
{\em Lemma~\ref{lem:effectively generation} is the only place in the paper where the assumption that $\G$ does not admit non-standard isogenies is used.} 
\end{remark}

We also need the following lemma, which deals with the case that the characteristic of $F$ equals 2, see Lemma~\ref{lem:implicit function}.  

\begin{lemma}\label{lem:char 2}
Assume ${\rm char}(F)=2$ and let $\tau_0\in\order_w$ be nonzero. Let $\lambda\in\Phi$. There exists an absolute constant $\ell'$ with the following property. For every $r\in B(0,1)$, 
\[
u_\lambda(r)= h_1\cdots h_{\ell'}
\]
where for all $1\leq i\leq \ell'$, either $h_i\in \{u_{\lambda}(\tau_0\alpha^2): |\tau_0\alpha^2|\leq |\tau_0|^{-1}\}$ or $h_i\in H_{g,w}$ with $\|h_i\|\leq 1$. 
\end{lemma}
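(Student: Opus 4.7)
The plan is to exploit the $F_w^2$-vector-space structure of $F_w$ in characteristic~$2$: since the residue field is perfect, $[F_w:F_w^2]=2$, so $F_w = F_w^2 \oplus u_0 F_w^2$ for any non-square $u_0$. I would first pick such a $u_0$ that is also a unit, e.g.\ $u_0 = 1+\varpi_w$ (non-square because $(1+\varpi_w\order_w)^2 \subset 1+\varpi_w^2\order_w$), and realize it as $\lambda(a_0)$ for some $a_0 \in A(\order_w)$. In the generic situation (absolute rank $\geq 2$, i.e.\ $\bG$ not of type $A_1$), the root $\lambda$ is primitive in the character lattice of $A$, so the restricted map $\lambda:A(\order_w)\to\order_w^\times$ is surjective and such an $a_0$ exists. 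Setting $h_0 := \Delta_g(a_0) \in H_{g,w}$, the conditions $g^w \in K_w'$ (at the good place, Proposition~\ref{prop: splitting-place}) and $a_0 \in K_w$ give $\|h_0\|_w \leq 1$, and the general torus-conjugation identity extending~\eqref{eq: a and a' conjugation} yields $h_0\, u_\lambda(s)\, h_0^{-1} = u_\lambda(u_0 s)$ for all $s \in F_w$.

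Given $r \in B(0,1)$ (so $|r|_w < 1$), I would decompose $r/\tau_0 = \beta_1^2 + u_0\beta_2^2$ uniquely using the basis $\{1,u_0\}$ of $F_w$ over $F_w^2$. A short valuation computation (writing $r/\tau_0 = X + \varpi_w Y$ with $X,Y \in F_w^2$ and using the integrality of $v(\beta_i)$ together with $|r/\tau_0| < |\tau_0|^{-1}$) yields $|\beta_i| \leq |\tau_0|^{-1}$ and hence $|\tau_0\beta_i^2| \leq |\tau_0|^{-1}$, matching the size requirement on the ``special'' factors. The factorization
\[
u_\lambda(r) \;=\; u_\lambda(\tau_0\beta_1^2) \cdot u_\lambda(u_0\tau_0\beta_2^2) \;=\; u_\lambda(\tau_0\beta_1^2) \cdot h_0\, u_\lambda(\tau_0\beta_2^2)\, h_0^{-1}
\]
then exhibits $u_\lambda(r)$ as a product of $\ell' = 4$ factors of the required form. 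The two $H_{g,w}$-factors appear as the inverse pair $(h_0, h_0^{-1})$, so their $G^2$-components cancel automatically and the total product lies in $G^1_w$, as needed.

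The main obstacle is the rank-$1$ case $\bG = \SL_2$, where $\lambda:A\cong\mathbb{G}_m\to\mathbb{G}_m$ is the squaring map $t\mapsto t^2$, so $\lambda(A(\order_w)) = (\order_w^\times)^2$ does not contain the non-square $u_0$ and the torus shortcut fails. Handling this case would require constructing $h_0$ from a non-torus element of $H_{g,w} = \Delta_g(\SL_2(F_w))$ and arranging the factorization so that the non-$U_\lambda$ contributions from such a conjugation cancel across the whole product, subject to the constraint that the product of all $H_{g,w}$-factors (taken in order) equals the identity in $G_w$; this is more delicate but should still produce an absolute $\ell'$.
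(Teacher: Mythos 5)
Your construction for the higher-rank case is clean and genuinely different from the paper's. Where the paper proves the lemma via explicit $2\times 2$ matrix identities inside $L_\lambda=\langle U_\lambda,U_{-\lambda}\rangle$ (after first applying a version of Lemma~\ref{lem:effectively generation} to $\SL_2$ over the subfield $F_w^2$), you exploit $[F_w:F_w^2]=2$ directly: write $r/\tau_0=\beta_1^2+u_0\beta_2^2$ with $u_0$ a non-square unit realized as $\lambda(a_0)$ for a torus element $a_0\in A(\order_w)$, and then $u_\lambda(r)=u_\lambda(\tau_0\beta_1^2)\cdot\Delta_g(a_0)\,u_\lambda(\tau_0\beta_2^2)\,\Delta_g(a_0)^{-1}$ does the job with $\ell'=4$. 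The valuation bookkeeping and the $G^2$-cancellation are both fine, and this avoids any appeal to Lemma~\ref{lem:effectively generation}. One small caveat on the primitivity claim: it is not true in general that every root is primitive in the weight lattice once the rank is $\geq 2$ --- the long roots of $C_n$ are twice a weight --- so the argument secretly leans on the standing hypothesis that $\G$ admits no non-standard isogenies, which in $\mathrm{char}=2$ rules out $B_n,C_n,F_4$ and leaves only types where every root is primitive. You should make that dependence explicit.

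The real problem is that you have not actually proved the lemma: the type $A_1$ case is left as a sketch (``this is more delicate but should still produce an absolute $\ell'$''). That case cannot be dismissed, because Lemma~\ref{lem:char 2} feeds into Proposition~\ref{prop:almost inv1} and hence into Theorem~\ref{thm:main}, which is stated and needed for all $\G$ including type $A_1$ (where the required spectral input is Drinfeld's theorem rather than property $(T)$). For $\SL_2$, $\lambda(A(\order_w))=(\order_w^\times)^2$, so no torus element produces a non-square scaling, and, as you note, conjugating $U_\lambda$ by any non-Borel element of $\SL_2$ does not preserve $U_\lambda$; so the single-conjugation shortcut genuinely dies. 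What the paper does here is precisely the ``more delicate'' thing you gesture at: it takes $\mathsf{x}=\varphi_\lambda\bigl(\begin{smallmatrix}r'&1\\1&0\end{smallmatrix}\bigr)$, notes that in characteristic $2$ one has $\mathsf{x}\cdot\varphi_\lambda\bigl(\begin{smallmatrix}0&1\\1&r'\end{smallmatrix}\bigr)=I$ so the $G^2$-components of $\Delta_g(\mathsf{x})$ and $\Delta_g(\mathsf{x}^{-1})$ cancel, and sandwiches a lower-triangular element of $L'_{\lambda,\tau_0}$ between them so that the $G^1$-component becomes upper triangular; a further torus correction and one explicit $u_\lambda(\tau_0\alpha^2)$ factor produce $u_\lambda(r)$. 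Carrying this out is exactly the content missing from your argument, and without it the proposal does not establish the lemma.
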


\begin{proof}
We give a hands on proof based on direct computations.  
Let us begin with the following observation: 
Let $\varphi_\lambda: \SL_2 \to \langle U_\lambda, U_{-\lambda}\rangle$ be the natural isomorphism which maps the upper (resp.\ lower) triangular unipotent subgroup of $\SL_2$ to $U_\lambda$ (resp.\ $U_{-\lambda}$). There exist $0<\constk\label{k:char 2}<1$ and $\mathsf h_1, \mathsf h_2, \mathsf h_3\in H_{g,w}$ with $\|\mathsf h_i\|\leq 1$ so that every element
 \[
    h\in \varphi_\lambda\left(\begin{pmatrix}\tau_0 & 0\\ 0& 1\end{pmatrix}\SL_2(\order_w^2[\ref{k:char 2}])\begin{pmatrix}\tau_0^{-1} & 0\\ 0& 1\end{pmatrix}\right)=:L'_{\lambda, \tau_0}[\ref{k:char 2}]
    \]
    can be written as a product 
    \begin{equation}\label{eq: char 2 sl2}
    h=h_1\cdots h_6
    \end{equation}
    where for every $1\leq j\leq 6$, $h_j\in \mathsf h_i \{u_{\lambda}(\tau_0\alpha^2): |\tau_0\alpha^2|\leq 1\}\mathsf h_i^{-1}$ for some $1\leq i\leq 3$.
    
    This is a special case of Lemma~\ref{lem:effectively generation}, applied with $\SL_2$ over $F_w^2$ and the root subgroup $\{u_\lambda(r): r\in F_w^2\}$.

 We now turn to the proof of the lemma. Let $|\mathsf a|, |\mathsf c|\leq \ref{k:char 2}$ and put $r'=\frac{r}{1+\mathsf a^4}$, then 
 \[
 \begin{pmatrix}r' & 1\\1& 0\end{pmatrix}
 \begin{pmatrix}\mathsf a^2 & 0\\
\tau_0^{-1} \mathsf c^2& \mathsf a^{-2}\end{pmatrix}
\begin{pmatrix}0 & 1\\
1& r'\end{pmatrix}=\begin{pmatrix}a^{-2} & r'\mathsf a^2+r'\mathsf a^{-2}+\tau_0^{-1}\mathsf c^2\\
0& \mathsf a^2\end{pmatrix}.
 \]
 This implies that 
 \begin{multline*}
 \Delta_g\left(\varphi_\lambda\left(\begin{pmatrix}r' & 1\\1& 0\end{pmatrix}\right)\right)
 \varphi_\lambda\left(\begin{pmatrix}\mathsf a^2 & 0\\
\tau_0^{-1} \mathsf c^2& \mathsf a^{-2}\end{pmatrix}\right)
\Delta_g\left(\varphi_\lambda\left(\begin{pmatrix}0 & 1\\
1& r'\end{pmatrix}\right)\right)=
\\\varphi_\lambda\left(\begin{pmatrix}a^{-2} & r'\mathsf a^2+r'\mathsf a^{-2}+\tau_0^{-1}\mathsf c^2\\
0& \mathsf a^2\end{pmatrix}\right).
 \end{multline*}
Now, we have 
 \[
 \begin{pmatrix}\mathsf a^2 & 0\\0& \mathsf a^{-2}\end{pmatrix}
 \begin{pmatrix}\mathsf a^{-2} & r'\mathsf a^2+r'\mathsf a^{-2}+\tau_0^{-1}\mathsf c^2 \\0& \mathsf a^2\end{pmatrix}
 \begin{pmatrix}1 & \tau_0 (\tau_0^{-1}\mathsf a\mathsf c)^2\\0& 1 \end{pmatrix}=
 \begin{pmatrix}1 & r'(1+\mathsf a^4)\\0& 1 \end{pmatrix};
 \]
 applying $\varphi_\lambda$, we conclude that 
 \[
 \varphi_\lambda\left(\begin{pmatrix}\mathsf a^2 & 0\\0& \mathsf a^{-2}\end{pmatrix}\right)\varphi_\lambda\left(\begin{pmatrix}a^{-2} & r'\mathsf a^2+r'\mathsf a^{-2}+\tau_0^{-1}\mathsf c^2\\
0& \mathsf a^2\end{pmatrix}\right)u_\lambda\bigl(\tau_0 (\tau_0^{-1}\mathsf a\mathsf c)^2\bigr)=u_\lambda(r),    
 \]
where we also used $r=(1+\mathsf a^4)r'$. 
 
 The claim follows from these computations and~\eqref{eq: char 2 sl2}.
\end{proof}

\begin{lemma}\label{lem:sl2-rep}
There exist $\constE\label{C:sl2-rep}$ so that the following holds. 
Let $m_1\in\bbn$, $m\geq 3m_1$, and $\lambda\in\Phi$. 
Let $g\in K_w[m]$ satisfy $\|g_\lambda-I\|\geq q_w^{-m_1}\|g-I\|$.
Let $m_g\geq m$ be so that $g\in K_w[m_g]\setminus K_w[m_g+1]$.
For every $q_w^{m_1}\leq |t|_w\leq q_w^{(m_g-m_1)/2}$, and all $|s|_w\leq1$, we have 
\[
\Bigl\|\al'(t)\ul'(s)g\ul'(-s)\al'(t^{-1})-\al(t)g_\lambda\al(t^{-1})\Bigr\|\leq \ref{C:sl2-rep} q_w^{m_1} \max\{|s|_w,|t|_w^{-1}\}. 
\]
If we further assume that $|t|_w=q_w^{(m_g-m_1)/2}\geq \ref{C:sl2-rep}q_w^{2m_1+3}$ and $|s|_w\leq \ref{C:sl2-rep}^{-1}q_w^{-2m_1-3}$, 
\[
\|\al'(t)\ul'(s)g\ul'(-s)\al'(t^{-1})-I\|\geq q_w^{-m_1-2},
\]
where the implied constant is absolute. 
\end{lemma}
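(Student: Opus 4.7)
The plan is to reduce the statement to a computation in $G_w$ via~\eqref{eq: a and a' conjugation}, factor out the $\lambda$-component of $g$, and then control the resulting error via the root-space decomposition under $\Ad(\al(t))$. By~\eqref{eq: a and a' conjugation}, the primed operators $\al'(t)$ and $\ul'(\pm s)$ act by conjugation on $G_w$ exactly like their unprimed counterparts $\al(t)=a_\lambda(t)$ and $\ul(\pm s)=u_\lambda(\pm s)$, so the entire estimate becomes one inside $G_w$. Using the Bruhat decomposition~\eqref{eq: Km decomp} and the ordering on $\Phi^+$ from~\S\ref{sec: split Lie-alg}, I would write $g=g^\natural g_\lambda$ with $g^\natural\in K_w[m_g]$ having trivial $\lambda$-component in its Bruhat factorization. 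Since $U_\lambda$ is abelian, $g_\lambda$ commutes with $\ul(\pm s)$, which gives
\[
\al(t)\ul(s)g\ul(-s)\al(t^{-1})=\bigl[\al(t)\ul(s)g^\natural\ul(-s)\al(t^{-1})\bigr]\cdot\al(t)g_\lambda\al(t^{-1}).
\]
Writing $g_\lambda=u_\lambda(r)$, one has $\al(t)g_\lambda\al(t^{-1})=u_\lambda(t^2 r)$ with $|t^2 r|\ll q_w^{-m_1}\leq 1$, so this factor is bounded; the first inequality thus reduces to showing $\|\al(t)\ul(s)g^\natural\ul(-s)\al(t^{-1})-I\|\ll q_w^{m_1}\max\{|s|,|t|^{-1}\}$.

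For the latter, I would commute $\al(t)$ past $\ul(\pm s)$ via $\al(t)\ul(\pm s)=\ul(\pm t^2 s)\al(t)$ to obtain $\al(t)\ul(s)g^\natural\ul(-s)\al(t^{-1})=\ul(t^2 s)\,\al(t)g^\natural\al(t^{-1})\,\ul(-t^2 s)$, and then linearize: decompose $g^\natural-I$ (of norm $\leq q_w^{-m_g}$) along the root spaces $\mathfrak{g}_\mu$ with $\mu\neq\lambda$ and along the Cartan. Under $\Ad(\al(t))$ the $\mathfrak{g}_\mu$-component scales by $t^{\langle\mu,\alpha_\lambda^\vee\rangle}$ and the Cartan part is fixed; combining the range $|t|\leq q_w^{(m_g-m_1)/2}$ with the fact that the Cartan integer attains the value $2$ only for $\mu=\pm\lambda$ gives $\|\al(t)g^\natural\al(t^{-1})-I\|\ll q_w^{m_1}|t|^{-1}$. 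The outer conjugation $\Ad(\ul(\pm t^2 s))$ is polynomial of bounded degree in $|t^2 s|$; its contribution consists of cross-terms of combined size $\ll|s|\cdot\|\al(t)g^\natural\al(t^{-1})-I\|$ together with nilpotency-truncated higher-order corrections, yielding the required error estimate.

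For the second inequality, apply the first at $|t|=q_w^{(m_g-m_1)/2}\geq\ref{C:sl2-rep}q_w^{2m_1+3}$ and $|s|\leq\ref{C:sl2-rep}^{-1}q_w^{-2m_1-3}$; then $q_w^{m_1}\max\{|s|,|t|^{-1}\}\leq q_w^{-m_1-3}$, so
\[
\bigl\|\al'(t)\ul'(s)g\ul'(-s)\al'(t^{-1})-\al(t)g_\lambda\al(t^{-1})\bigr\|\leq q_w^{-m_1-3}.
\]
Meanwhile $\al(t)g_\lambda\al(t^{-1})-I=u_\lambda(t^2 r)-I$ has norm comparable to $|t|^2|r|\gtrsim|t|^2\|g_\lambda-I\|\geq|t|^2 q_w^{-m_1-m_g}$, which by our choice of $|t|$ exceeds $q_w^{-m_1-2}$ once the constant $\ref{C:sl2-rep}$ is fixed to absorb the implied constants from the embedding and the Bruhat decomposition. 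The triangle inequality then gives the lower bound. The main obstacle is the bookkeeping in the error estimate: one must verify that conjugation by $\ul(\pm t^2 s)$ does not reintroduce a $\lambda$-component cancelling against the dominant $\al(t)g_\lambda\al(t^{-1})$, which uses both that $g^\natural$ has no $\lambda$-component in its Bruhat factorization and that $|t^2 s|$ stays controlled by the choice of $|s|$. The hypotheses $m\geq 3m_1$ and the restriction on the range of $|t|$ are essential here, both to justify the linearization of $g^\natural$ and to keep the various cross-terms and scaling factors $t^{\langle\mu,\alpha_\lambda^\vee\rangle}$ (bounded uniformly because $\G$ does not admit non-standard isogenies) well under control.
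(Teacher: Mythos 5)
Your plan of factoring $g = g^\natural g_\lambda$ and isolating the main term $\al(t)g_\lambda\al(t^{-1})$ is an organizationally different route from the paper, which instead estimates $\|\al(t)g\al(t^{-1})-\al(t)g_\lambda\al(t^{-1})\|$ directly and separately controls the perturbation coming from $\ul(s)$. (A minor caveat you skip: $g_\lambda$ sits in the \emph{interior} of the Bruhat product $u^-au^+$, not at the end, so pulling it out as a right factor already costs higher--order commutator corrections.)

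The genuine gap is in how you control $\al(t)\ul(s)g^\natural\ul(-s)\al(t^{-1})$. After rewriting it as $\ul(t^2s)\,\al(t)g^\natural\al(t^{-1})\,\ul(-t^2s)$, you assert the outer conjugation contributes cross--terms of size $\ll|s|_w\cdot\|\al(t)g^\natural\al(t^{-1})-I\|$. But under the lemma's hypotheses $|s|_w\le1$ and $|t|_w\le q_w^{(m_g-m_1)/2}$, the conjugating parameter $|t^2s|_w$ can be as large as $q_w^{m_g-m_1}$ --- it is in no sense small --- so $\Ad\bigl(\ul(\pm t^2s)\bigr)$ inflates the $\mathfrak{g}_\mu$-components by polynomial factors in $|t^2s|_w$. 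The $j$-th cross--term from $\mathfrak{g}_\mu$ has size $\sim|s|_w^{\,j}\,|t|_w^{\,2j+\langle\mu,\lambda^\vee\rangle}\,\|g^\natural-I\|$, and whether this is dominated by $|s|_w\cdot\|\al(t)g^\natural\al(t^{-1})-I\|$ requires precisely the root--string bookkeeping you flag as ``the main obstacle'' but never carry out; it does not follow from the estimate you state. The paper avoids this by conjugating in the \emph{other} order: it first writes $\ul(s)g\ul(-s)=g\,h(s,g)$ with $\|h(s,g)-I\|\ll\|g-I\|\,|s|_w$ (here the conjugating element $u_\lambda(\pm s)$ lies in $K_w$, so normality of the congruence filtration gives the bound immediately), and then conjugates by $\al(t)$, where the inflation factor $\ll q_w^{m_g}$ is cancelled exactly by $\|g-I\|=q_w^{-m_g}$. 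Thus the paper never conjugates by a large element of $U_\lambda$, whereas your argument does so and does not justify the resulting estimate; as written there is a gap.
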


\begin{proof}
First note that in view of~\eqref{eq: a and a' conjugation}, we may replace $a'_\lambda$ and $u'_\lambda$ with $a_\lambda$ and $u_\lambda$, respectively. 

The proof is based on the following observation: let $g\in K_w[m']$ for some $m'\geq 1$, then for all $s\in B_F(0,1)$ we have 
\[
\ul(s)g\ul(-s)=gh(s,g)
\]
where $\|h(s,g)\|\ll \|g-I\||s|_w$ and the implied constant is absolute.  

Let $g$ and $t$ be as in the statement, and let $s\in B_w(0,1)$. 
Conjugating the above with $\al(t)$, we conclude that
\begin{equation}\label{eq:h-t-s-g}
   \al(t)\ul(s)g\ul(-s)\al(t^{-1})=\al(t)g\al(t^{-1})h_t(s,g) 
\end{equation}
where where $h_t(s,g)=\al(t)h(s,g)\al(t^{-1})$, hence, 
\[
\|h_t(s,g)-I\|\ll q_w^{m_g}\|g-I\||s|_w\ll |s|_w
\]
for an absolute implied constant. 

Recall the condition $\|g_\lambda-I\|\geq q_w^{-m_1}\|g-I\|$. 
Moreover, note that the conjugation by $\al(t)$ has weight $2$ for $U_\lambda$ and weight $\leq 1$ otherwise. We thus conclude that 
\[
\|\al(t)g\al(t^{-1})-\al(t)g_\lambda\al(t^{-1})\|\leq q_w^{m_1}|t|_w^{-1}
\]
This and~\eqref{eq:h-t-s-g} finish the proof of the first claim. 

To see the second claim first note that $a_\lambda(t)g_\lambda a_{\lambda}(t^{-1})=u_\lambda(\tau)$ where $|\tau|\geq q_w^{-m_1-2}$; moreover, if $|t|_w=q_w^{(m_g-m_1)/2}\geq \ref{C:sl2-rep}q_w^{2m_1+3}$ and $|s|_w\leq \ref{C:sl2-rep}^{-1}q_w^{-2m_1-3}$, then 
\[
\ref{C:sl2-rep} q_w^{m_1} \max\{|s|_w,|t|_w^{-1}\}\leq q_w^{-m_1-3}.
\]
The second claim thus follows from the first claim and the above. 
\end{proof}

Recall from \S\ref{sec: good place} that we fixed a Haar measure $\prod c_v|\omega_v|$ on $H_g$, and in particular, $c_w|\omega_w|$ on $H_{g,w}$. Unless otherwise is stated explicitly, $c_w|\omega_w|$ 
is the measures of reference on $H_{g,w}$ throughout the paper.

\begin{lemma}\label{lem:adjustment-1}
Let $m_0$ be as in Lemma~\ref{lem:Chevalley irred rep}. 
Let $m\geq 6m_0+1$. 
For $i=1,2$, let 
\[
E_i\subset K_wA_{-m_0}K_w
\]
be two subsets so that $|K_wA_{-m_0}K_w\setminus E_i|\leq  q_w^{-4m_0\dim\G}$.

Then for every $\disg'\in K_w'[m]$, there exist $\alpha_i\in \Delta_g(E_i)$ and $\lambda\in\Phi$ so that 
\[
\alpha_2 \disg'\alpha_1^{-1}=\disg \in K_w[m-2m_0]\subset G_w\times \{1\}
\]
$\|\disg_{\lambda}-I\|\gg \|\disg-I\|$ where the implied constant depends only on $\dim\G$.
\end{lemma}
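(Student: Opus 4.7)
My plan is to parametrize the admissible pairs $(\alpha_1,\alpha_2)$, reduce the second-coordinate vanishing condition to a single relation between $h_1$ and $h_2$, and combine Lemma~\ref{lem:Chevalley irred rep} with a pigeonhole argument against the bad sets $E_i$. Write $\alpha_i=g(h_i,h_i)g^{-1}$ with $h_i\in E_i$. Because $g\in G^2$ and $g^w\in K_w'$ by Proposition~\ref{prop: splitting-place}(3), the $w$-component of $g$ has the form $(1,g_w^{(2)})$ with $g_w^{(2)}\in K_w$. A direct computation then shows that $\alpha_2\disg'\alpha_1^{-1}\in G_w\times\{1\}$ precisely when $h_2=h_1t$, where
\[
t:=(g_w^{(2)})^{-1}(\disg'^{\,2})^{-1}g_w^{(2)}\in K_w[m],
\]
in which case the resulting first coordinate is $\disg=h_1\disg''h_1^{-1}$ with $\disg'':=t\disg'^{\,1}\in K_w[m]$.

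Next I apply Lemma~\ref{lem:Chevalley irred rep} to $\disg''\in K_w[m]\subset K_w[2m_0+1]$, producing $\lambda\in\Phi$ and a subset $F\subset K_wA_{-m_0}K_w$ of measure at least $q_w^{-3m_0\dim\G}$ on which the conjugate $h_1\disg''h_1^{-1}$ has a $\lambda$-coordinate of size at least $q_w^{-m_0}\|h_1\disg''h_1^{-1}-I\|$. Since $t^{-1}\in K_w$ preserves the double coset $K_wA_{-m_0}K_w$ under right multiplication and Haar measure on $G_w$ is bi-invariant, the complement of $E_2t^{-1}$ inside $K_wA_{-m_0}K_w$ still has measure at most $q_w^{-4m_0\dim\G}$; an analogous bound holds for $E_1$. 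Because $q_w^{-3m_0\dim\G}>2q_w^{-4m_0\dim\G}$, the intersection $F\cap E_1\cap E_2t^{-1}$ is non-empty, and I choose any $h_1$ in it; then $h_2:=h_1t\in E_2$ and $\alpha_2\disg'\alpha_1^{-1}=(\disg,1)\in G_w\times\{1\}$.

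To bound the level of $\disg=h_1\disg''h_1^{-1}$, write $h_1=k_1ak_2$ with $k_i\in K_w$ and $a\in A_{-m_0}$. Conjugation by $k_1,k_2$ preserves $K_w[m]$ by normality, while the defining conditions of $A_{-m_0}$ force $q_w^{-2m_0}\le|\alpha(a)|_w\le q_w^{2m_0}$ for every $\alpha\in\Phi$; applied root-by-root to the decomposition $K_w[m]=U_m^-A_mU_m$ from~\eqref{eq: Km decomp}, this shows that conjugating $\disg''$ by $a$ shifts each root-space valuation by at most $2m_0$, hence $\disg\in K_w[m-2m_0]$. The lower bound $\|\disg_\lambda-I\|\gg\|\disg-I\|$ now follows directly from $h_1\in F$.

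The main obstacle is really the clean setup in the first paragraph: once one recognises that the second-coordinate constraint reduces the two unknowns $h_1,h_2$ to one, Lemma~\ref{lem:Chevalley irred rep} is perfectly calibrated to supply a good set $F$ whose measure dominates the loss from the complements of $E_1$ and $E_2t^{-1}$, and the level estimate is standard once one has nailed down the weights of $A_{-m_0}$.
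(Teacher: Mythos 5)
Your proof is correct and follows essentially the same strategy as the paper's: factor out the second-coordinate constraint, apply Lemma~\ref{lem:Chevalley irred rep} to the modified element, and pigeonhole against the bad sets. The only presentational difference is that you solve the constraint explicitly as $h_2=h_1t$ (equivalently, parametrize $\alpha_1$ and let it determine $\alpha_2$), whereas the paper writes $\disg'=h\hat\disg$ with $h\in H_{g,w}\cap K'_w[m]$ and $\hat\disg\in K_w[m]$ and then uses the measure-preserving map $\alpha\mapsto h\alpha^{-1}$; your $t$ is the paper's $h$ (up to $\Delta_g$ and an inverse) and your $\disg''$ is the paper's $\hat\disg$, so the two formulations coincide after unwinding. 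Your spelled-out level estimate via the decomposition $K_w[m]=U_m^-A_mU_m$ and the root-by-root action of $a\in A_{-m_0}$ supplies a detail the paper asserts without comment.
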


\begin{proof}
Let us write $\disg'=h \hat\disg$ for some $h\in H_{g,w}\cap K'_w[m]$ and $\hat\disg\in K_w[m]=G_w\cap K'_w[m]$. 
If $\alpha \in \Delta_g(K_wA_{-m_0}K_w)$, then
\[
\disg'\alpha^{-1} = (h\alpha^{-1}) \alpha \hat\disg\alpha^{-1}.
\]
Note that since $\hat\disg\in K_w[m]$, for all $\alpha$ as above we have $\alpha \hat\disg\alpha^{-1}\in K_w[m-2m_0]$. We will find $\alpha_1,\alpha_2\in \Delta_g(E_i)$ so that 
$\alpha_2 \disg'\alpha_1^{-1}= \alpha_1 \hat\disg\alpha_1^{-1}$.

Since $\hat\disg\in K_w[m]\subset K_w[2m_0+1]$, in view of Lemma~\ref{lem:Chevalley irred rep}, there exists $\lambda\in\Phi$ so that
\[
|E_{\hat\disg}|\geq q_w^{-3m_0\dim\G}
\]
where 
$E_{\hat\disg}=\bigl\{y\in K_wA_{-m_0}K_w: \|(y \hat\disg y^{-1})_\lambda-I\|\geq q_w^{-m_0}\|y \hat\disg y^{-1}-I\|\bigr\}$.

Since $h\in K'_w\cap H_{g,w}$, we have 
\[
h.\Delta_g(K_wA_{-m_0}K_w)=\Delta_g(K_wA_{-m_0}K_w).
\]
In particular, the map $f:\Delta_g(E_1)\to \Delta_g(K_wA_{-m_0}K_w)$
defined by $\alpha \mapsto h\alpha^{-1}$ is measure-preserving. 

Altogether, and in view of our assumption on the relative measures of $E_1$ and $E_2$, we may choose 
$\alpha_1 \in \Delta_g(E_1)\cap \Delta_g(E_{\hat\disg})$ 
with $\alpha_2=f(\alpha_1)^{-1}  \in \Delta_g(E_2)$. 
The lemma follows.
\end{proof}

\section{Non-divergence of unipotent orbits}\label{s;nondivergence}

In this section we show that when $X$ is not compact, the cusps have small measure with respect to $\mu_g$. We follow the proof of \cite[Lemma 3.6.1]{EMV}, which relies on the non-divergence of unipotent flows. 

Let $\mathbb F$ be a finite field such that $F$ is a finite separable extension of $\tF:=\mathbb{F}(T)$, the field of rational functions in one variable over $\mathbb{F}$, see~e.g.~\cite[Ch.~III]{Weil-BasicNum}.  
Let $\tilde{\G}={\rm Res}_{F/\tF}(\G
')$. Then $\tilde{\G}$ is a semisimple $\tF$-group and 
\[
\tilde{\G}(\adele_{\tF})/\tilde{\G}(\tF)=\G'(\adele_{F})/\G'(F).\]

Fix a good place $w\in \Sigma$, and let $\tilde w$ be a place of $\tF$ so that $w|\tilde w$. We note that since $w$ is a good place for $\G'$, the group $\tilde\G$ is isotropic over $\tF_{\tilde w}$ (however, it need not be split).

Let $K'(\tilde w):=\prod_{v\nmid \tilde w} K'_v$. Since $\bG$ is simply connected, it follows from strong approximation (see \cite{strong_approx}) that one can write 
\[
\G'(\adele)= K'(\tilde w)\Bigl(\textstyle\prod_{v\mid \tilde w}G_v\Bigr)\G'(F)=K'(\tilde w) \tilde \bG(\tF_{\tilde w})\tilde \bG(\tF).
\]
Let $\Gamma=\tilde\bG(\tF)\cap K'(\tilde w)$ and $\tGw=\tilde\G(\tF_{\tilde w})$; note that $\Gamma$ is a congruence subgroup of $\tGw$. The above implies that
\[
K'(\tilde w)\backslash\G'(\adele)/\G'(F)\cong \tGw/ \Gamma.
\]

Let $\mathcal O$ be the ring of $\tilde w$-integers in $\tF$; note that $\mathcal O$ is a PID. Let $\tilde{\frakg}=\Lie(\tGw)$, and let $\tilde{\frakg}_{\mathcal O}$ be an $\mathcal O$-lattice in the $\tilde{\mathfrak{g}}$ with the property that  $[\tilde{\mathfrak{g}}_{\mathcal O}, \tilde{\mathfrak{g}}_{\mathcal O}] \subset \tilde{\mathfrak{g}}_{\mathcal O}$. As it is done in \cite{EMV}, for any $x\in X$, we set
\begin{align*}
    \height(x)=\sup\bigl\{\norm{\Ad(g)\mathbf u}^{-1}:x=g\Gamma,\:g\in \tGw,\:\mathbf u\in\tilde{\frakg}_{\mathcal O}\setminus\{0\}\bigr\},
\end{align*}
and 
\[
\Siegel(R):=\{x\in X :\height (x)\leq R\}.
\]

\begin{lemma}\label{lem: inj radius}
For every $m\in\bbn$, let 
\[
Q'_m:=\bigl\{\bigl((g_v^1),(g^2_v)\bigr)\in K': (g_w^1, g_w^2)\in \pi_w(K'_w[m])\bigr\}.
\]
There exists $\constk\label{k: inj rad}$ so that for all 
$x\in X$ with $m\leq \height(x)^{-\ref{k: inj rad}}$, the map
$\disg\mapsto \disg x$ is injective on $Q'_m$.
\end{lemma}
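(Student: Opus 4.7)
The plan is to suppose $g_1 x = g_2 x$ for some $g_1, g_2 \in Q'_m$ and to show that $g := g_1^{-1} g_2 \in Q'_m$ (which is a subgroup of $K'$) is trivial. Using strong approximation, I would first lift $x$ to an element $y \in \G'(\adele)$ with $y_v = I$ at every place $v \nmid \tilde w$ and with components at $v \mid \tilde w$ representing the class of $x$ under the identification $X \cong \tGw / \Gamma$. The fixing condition $g x = x$ then rewrites as $\eta := y^{-1} g y \in \G'(F)$, and the objective is to conclude $\eta = I$.

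Supposing for contradiction $\eta \neq I$, I would pick a nonzero entry $a \in F^\times$ of $\rho(\eta) - I$, where $\rho$ is our fixed embedding of $\G'$ into $\SL_N$. The product formula $\prod_{v \in \Sigma} |a|_v = 1$ is the central input. I would estimate each local factor as follows: at $v \nmid \tilde w$ one has $\eta_v = g_v \in K'_v$ (since $y_v = I$), giving $|a|_v \leq 1$; at $v \mid \tilde w$ with $v \neq w$, using $g_v \in K'_v$ one gets $|a|_v \ll \|y_v\|_v \|y_v^{-1}\|_v$; and at $v = w$, the stronger condition $g_w \in K'_w[m]$ yields $\|g_w - I\|_w \leq q_w^{-m}$, so $|a|_w \ll \|y_w\|_w \|y_w^{-1}\|_w q_w^{-m}$. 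Multiplying and invoking the product formula produces
\[
q_w^{m} \ll \prod_{v \mid \tilde w} \|y_v\|_v \|y_v^{-1}\|_v,
\]
with an implied constant depending only on $[F:\tilde F]$ and the embedding $\rho$.

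The crucial step is then a polynomial comparison $\prod_{v \mid \tilde w} \|y_v\|_v \|y_v^{-1}\|_v \ll \height(x)^{C}$ for an absolute constant $C$. To establish it, I would use a Cartan decomposition of each $y_v$ and choose nonzero integer vectors $\mathbf{u}_\pm \in \tilde{\frakg}_{\mathcal O}$ in extremal weight spaces (corresponding to the highest and lowest roots) of the diagonal parts; then $\|\Ad(y)\mathbf{u}_\pm\|$ at $\tilde w$ is controlled by $\|y\|^{\mp c}$ up to constants, where $c$ is the height of the highest root, and the definition of $\height$ yields $\height(x) \gg \max(\|y\|, \|y^{-1}\|)^{c'}$ after accounting for the $\tilde w$-adic norm on $\tilde{\frakg}_{\mathcal O}$ viewed through restriction of scalars. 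Combining these gives $q_w^m \ll \height(x)^{C''}$; choosing $\ref{k: inj rad}$ appropriately makes this incompatible with the hypothesized size of $m$ unless $\eta = I$, hence $g = I$, hence $g_1 = g_2$.

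The main obstacle is the polynomial comparison $\prod_{v \mid \tilde w} \|y_v\|_v \|y_v^{-1}\|_v \ll \height(x)^{C}$: one must exhibit, for arbitrary $y \in \tGw$, nonzero integer vectors in $\tilde{\frakg}_{\mathcal O}$ that are supported in extremal weight spaces of the diagonal part of $y$ at every $v \mid \tilde w$ simultaneously. This is routine when $\tilde \G$ is split (via an integer Chevalley basis), but in general requires choosing a suitable maximal $F_v$-split torus at each $v \mid \tilde w$ and verifying that scaling by elements of $\mathcal O$ is sufficient to reach the required integer vectors.
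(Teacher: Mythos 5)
Your approach is genuinely different from the paper's. The paper observes that $\disg_{2,\tw}^{-1}\disg_{1,\tw}$ preserves the $\mathcal O$-lattice $L_x = \Ad(g)\tilde{\frakg}_{\mathcal{O}}$ setwise, applies Minkowski-type reduction to produce a basis $\bu_1,\dots,\bu_d$ of $L_x$ with $\|\bu_i\|\ll\height(x)^d$, and then uses the systole bound $\|\mathbf u\|\geq\height(x)^{-1}$ to conclude that if $\disg_{2,\tw}^{-1}\disg_{1,\tw}$ is close enough to $I$ it fixes each $\bu_i$, hence fixes $L_x$ pointwise and so is central. You instead push the problem to the product formula over all places, reducing matters to the comparison $\prod_{v\mid\tilde w}\|y_v\|\|y_v^{-1}\|\ll\height(x)^C$. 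The key difference: the paper's lattice reduction is applied to the lattice $L_x$ and is valid for \emph{any} representative, while your product-formula route requires controlling the norm of the group element $y$ itself.

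This is where the gap lies. The comparison $\prod_{v\mid\tilde w}\|y_v\|\|y_v^{-1}\|\ll\height(x)^C$ is false for a general lift $y$ with $y_v\in K'_v$ at $v\nmid\tilde w$. Concretely, take $y$ with $y_{\tw}=\gamma\in\Gamma$; then $\Ad(y_{\tw})\tilde{\frakg}_{\mathcal O}=\tilde{\frakg}_{\mathcal O}$, so $\height(x)$ is bounded by an absolute constant, yet $\|y_{\tw}\|$ can be arbitrarily large. To salvage the argument you would need to first replace $y$ by a \emph{reduced} representative lying in a Siegel domain for $\Gamma$ in $\tGw$, and then prove (or cite) a reduction-theory comparison between Siegel-domain coordinates and $\height$; this is a nontrivial additional input in the $S$-adic/function-field setting, which the paper avoids. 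Your proposed justification via Cartan decomposition and extremal weight vectors does not substitute for it: writing $y_v=k_va_vk'_v$, the required vectors $\mathbf u_\pm$ would have to lie both in $\tilde{\frakg}_{\mathcal O}$ and in the extremal weight spaces of the torus through $a_v$, but $\Ad(k'_v)$ does not preserve the discrete $\mathcal O$-lattice $\tilde{\frakg}_{\mathcal O}$ (only the compact $\frako_v$-lattice), and a one-dimensional weight space has no reason to contain nonzero $\mathcal O$-lattice points for an arbitrary (unreduced) $y_v$.

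Separately, I note that the identification $X\cong\tGw/\Gamma$ that you use to justify $y_v=I$ at $v\nmid\tilde w$ is only $K'(\tilde w)\backslash X\cong\tGw/\Gamma$; you can only arrange $y_v\in K'_v$ at $v\nmid\tilde w$. This is harmless for the product-formula estimate (you still get $|a|_v\leq 1$ there), but it matters for the freedom you have in modifying $y$: the only remaining freedom is $y_{\tw}\mapsto y_{\tw}\gamma$ with $\gamma\in\Gamma$, which is exactly the freedom you need but do not exploit.
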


\begin{proof}
Fix some $m\in\bbn$ and assume that $\disg_1 x= \disg_2 x$ for $\disg_1, \disg_2\in Q'_m$. Fix $\tilde g \in\tGw=\tilde\bG(\tilde F_{\tilde w})$ such that the projection to the $\tilde w$ place of $x$ is $\tilde g\Gamma$.  Then, $\disg_{1,\tw}\tilde g\Gamma =\disg_{2,\tw}\tilde g\Gamma$, and so  $\disg_{2,\tw}^{-1}\disg_{1,\tw}$ fixes $L_x:=\Ad(\disg) \tilde{\mathfrak{g}}_{\mathcal{O}}$.
Note that $L_x$ is an $\mathcal{O}$-lattice in $\tilde{\mathfrak{g}}$. By the definition of the height, for all $\mathbf u\in L_x$ we have $\|\mathbf u\|\geq \mathrm{ht}(x)^{-1}$. 

Since the  covolume of $L_x$ is independent of $x$ (and equals the covolume of $\tilde{\frakg}_{\mathcal{O}}$), by lattice reduction theory, one can find a basis $\bu_1, \dots, \bu_{d}$ (where $d\leq 2N^2$) of $L_x$ such that $\| \bu_i\| \ll \mathrm{ht}(x)^{d}$, see \cite[Thm.~ 1.2]{S-adic-Minkowski} and references therein.  

Thus, by choosing the constants $m,\ref{k: inj rad}$ sufficiently large, we get
$$ \| \disg_{2,\tw}^{-1}\disg_{1,\tw} \bu_i-\bu_i\| < \mathrm{ht}(x)^{-1} \mbox{ for all $1 \leq i \leq d$}.$$
In that case, the vector $\disg_{2,\tw}^{-1}\disg_{1,\tw}$ fixing the lattice $L_x$ setwise implies that it is, in fact, fixing $L_x$ pointwise. Thus, $\disg_{2,\tw}^{-1}\disg_{1,\tw}$ belongs to the center of $\tGw$. Let $g'\in G'$ be so that $x=g'\G'(F)$. 
Then, we have $\disg_2^{-1}\disg_1=g'\gamma g'^{-1}$ where $\gamma=(\gamma^1,\gamma^2)\in\G'(F)$ and $\gamma^i$ embeds diagonally in $\G(\adele)$. Now since $\disg_{2,w}^{-1}\disg_{1,w}$ is central, we conclude that $(\gamma^1_w,\gamma^2_w)$ is central, which is impossible if $m$ is big enough.
\end{proof}

\begin{proposition}
    [Non-divergence estimate]\label{prop;non-div} 
    There are positive constants $\constk\label{k:non-div norm bound}$ and $\constk\label{k:non-divergence expon}$, depending on $N$ and $[F:\tilde F]$, so that for any  $g\in G'$ we have \[
    \mu_{g}\left(X\setminus\Siegel( R)\right)\ll p_w^{\ref{k:non-div norm bound}}R^{-\ref{k:non-divergence expon}},\]
    recall that $p_w={\rm char}(k_w)$. 
\end{proposition}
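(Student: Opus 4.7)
The plan is to adapt \cite[Lemma 3.6.1]{EMV} to the positive characteristic setting by combining strong approximation with quantitative non-divergence of unipotent orbits on $\tGw/\Gamma$. Via the identification $K'(\tilde w)\backslash G'(\adele)/\G'(F)\cong\tGw/\Gamma$ established above, the measure $\mu_g$ pushes forward to a probability measure on $\tGw/\Gamma$, and since $\height$ is defined on this quotient it suffices to produce the desired bound there. Because $w$ is a good place, $\G$ splits over $F_w$, so $H_{g,w}$ contains the one-parameter unipotent subgroups $U'_\lambda=\Delta_g(U_\lambda)$; fix one and write $u(t)=u'_\lambda(t)$. By the $H_g$-invariance of $\mu_g$,
\[
\mu_g(X\setminus\Sieg(R))=\frac{1}{|B|}\int_X\int_B\mathbf{1}_{X\setminus\Sieg(R)}\bigl(u(t)y\bigr)\,dt\,d\mu_g(y)
\]
for every ball $B\subset F_w$, so the problem reduces to a pointwise bound on the inner integral, uniformly in $y$.

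For that pointwise bound I would invoke the quantitative non-divergence theorem for polynomial trajectories on the arithmetic quotient $\tGw/\Gamma$, in the positive characteristic analogue of the Kleinbock--Margulis / Dani--Margulis tradition. By the definition of $\height$, the event $u(t)y\notin\Sieg(R)$ forces some primitive $\mathbf{u}\in\tilde\frakg_{\mathcal O}\setminus\{0\}$ to satisfy $\|\Ad(u(t)\tilde y)\mathbf{u}\|<R^{-1}$, where $\tilde y$ is a lift of $y$; and for each fixed $\mathbf{u}$, the map $t\mapsto\Ad(u(t)\tilde y)\mathbf{u}$ is an $F_w$-polynomial of degree at most $2\dim\G$. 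The standard $(C,\alpha)$-good property of polynomials on the non-archimedean field $F_w$, combined with a covering argument over primitive sublattices of $\tilde\frakg_{\mathcal O}$ and the choice of $B$ with radius a fixed power of $\height(y)$ so that the expansion hypothesis of the non-divergence theorem holds, yields
\[
|\{t\in B:u(t)y\notin\Sieg(R)\}|\ll p_w^{\ref{k:non-div norm bound}}R^{-\ref{k:non-divergence expon}}|B|.
\]
Integrating this estimate in $y$ against $\mu_g$ gives the proposition.

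The main obstacle is this last step: certifying the quantitative non-divergence statement in the positive characteristic adelic setting with the correct explicit dependence on $p_w$ and $[F:\tilde F]$. In characteristic $p_w$, the $(C,\alpha)$-good constants for polynomials genuinely deteriorate with $p_w$ (the underlying Markov--Bernstein type inequalities lose a factor polynomial in $p_w$), which is precisely the source of the factor $p_w^{\ref{k:non-div norm bound}}$; the $[F:\tilde F]$ dependence enters through the covolume and index bounds for $\tilde\frakg_{\mathcal O}$ used in the Minkowski-style reduction enumerating primitive $\mathbf{u}$. The required expansion along $\{u(t)\}$ follows from semisimplicity of $H_{g,w}$, which guarantees that no nonzero subspace of $\tilde\frakg$ is simultaneously fixed by $\{u(t)\}$ and has small norm inside $\tilde\frakg_{\mathcal O}$; this is ultimately why the argument relies on the good place $w$ produced in Proposition~\ref{prop: splitting-place}.
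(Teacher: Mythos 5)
Your general framework is right---the proof does go through the strong approximation identification $K'(\tilde w)\backslash G'(\adele)/\G'(F)\cong\tGw/\Gamma$, replaces Kleinbock--Margulis with the positive-characteristic non-divergence theorem of Ghosh, uses the $(C,\alpha)$-good property of polynomials over $F_{\tilde w}$, and indeed the degradation of the Markov--Bernstein-type constants with $p_{\tilde w}$ is where the $p_w^{\ref{k:non-div norm bound}}$ factor comes from, while $[F:\tilde F]$ enters through the $\mathcal O$-lattice reduction bounds. But the reduction in the middle of your argument has a genuine gap.

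Your Fubini identity
\[
\mu_g(X\setminus\Sieg(R))=\frac{1}{|B|}\int_X\int_B\mathbf{1}_{X\setminus\Sieg(R)}\bigl(u(t)y\bigr)\,dt\,d\mu_g(y)
\]
requires the ball $B\subset F_w$ to be the \emph{same} for every $y$; but the pointwise non-divergence bound you want to apply in the inner integral requires $B$ to be chosen depending on $y$ so that the starting condition of the theorem holds for $y$. These two requirements are incompatible, and for a \emph{fixed} $B$ the inner integral is simply not $\ll R^{-\alpha}$ uniformly in $y$: for $y$ deep enough in the cusp, the whole trajectory $\{u(t)y:t\in B\}$ can remain outside $\Sieg(R)$. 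More seriously, your claim in the last paragraph---that semisimplicity of $H_{g,w}$ forbids any nonzero subspace of $\tilde{\mathfrak g}$ that is simultaneously $\{u(t)\}$-invariant and of small norm---is false as stated. The one-parameter unipotent subgroup $U=\{u(t)\}$ leaves plenty of proper subspaces of $\tilde{\mathfrak g}$ invariant (for instance, the root space $\mathfrak g_\lambda$ and various nilpotent subalgebras containing it), and at specific points of the $H$-orbit these $U$-invariant rational subspaces can have arbitrarily small norm. What semisimplicity actually gives you is that there is no proper \emph{$H_w$-invariant} $x$-rational subspace of small norm, uniformly over $x$ in the $H$-orbit (that is~\eqref{e;no-thin-subspace} in the paper). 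To pass from $H_w$-invariance to $U$-invariance, the paper does \emph{not} use an algebraic argument: since the set of $x$-rational proper subspaces of norm $\leq\xi$ is finite and none of them is $H_w$-invariant, for a.e.\ $h\in H_w$ the conjugate $hUh^{-1}$ avoids all of them. One then picks a single generic point $x'=hx$ for which both this genericity holds and the $U$-orbit of $x'$ equidistributes with respect to $\mu_g$ (this uses Mautner to upgrade $H$-ergodicity to $U$-ergodicity). The non-divergence theorem is then applied to this single orbit, and equidistribution---not Fubini---converts the time-average bound into a bound on $\mu_g(X\setminus\Sieg(R))$. So the missing ideas in your proposal are: (i) the genericity argument that lets you replace $U$ by a generic conjugate to kill the small $U$-invariant rational subspaces, and (ii) the use of Mautner plus Birkhoff-type equidistribution of a single well-chosen orbit, rather than an average over $y$.
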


\begin{proof}
    The proof is similar to that of \cite[Lemma 3.6.1]{EMV} (see \cite[App.~B]{EMV}) where one replaces the use of the non-divergence result by Kleinbock and Margulis~\cite{KM-Nondiv} with Ghosh~\cite{Ghosh}. We will go over the main steps of the proof presented in \cite[App.~B]{EMV} to explicate the differences.
	
	Since $\mathcal O$ is a PID, discrete $\mathcal O$-submodules of $\tilde{\frakg}$ are free, see~\cite[Lemma 4.1]{Ghosh}. Following \cite[\S 4]{Ghosh}, for a discrete $\mathcal O$-module $\Delta=\oplus_{i=1}^\ell\mathcal O{\bf u}_i$, we denote $\norm\Delta=\|{\bf u}_1\wedge\cdots\wedge{\bf u}_\ell\|_v$. Then, $\norm{\Delta}$ is a ``norm like" function in the language of~\cite{Ghosh} that plays the role of the covolume of $\Delta$ in the proof (see the proof of Lemma 4.3 and properties N1-N3 in \cite{Ghosh}). We refer to $\Delta$ as an $\mathcal O$-arithmetic lattice in \[V=\oplus_{i=1}^\ell \tF_{\tilde w}{\bf u}_i\subset\tilde{\frakg}.\]
	
	Let $h\in \tGw$. 
	A subspace $V\subset \tilde{\gfrak}$ is called \emph{$h$-rational} if $V\cap \Ad_h\tilde{\gfrak}_{\mathcal{O}}$ is an $\mathcal O$-arithmetic lattice in $V$;
	we let $\norm{V}_h=\norm{V\cap \Ad_h\tilde{\gfrak}_{\mathcal O}}$. The first step is to show that for any $h$ there are no $h$-rational subspaces which are $H_w$ invariant of low norm.

Let $U=\{u(t)\}$ be a one parameter $\tilde{F}_{\tw}$-unipotent subgroup of $H_w$, and use $\Ad_{\tilde\rho(u(t))}$ in the definition of $h(t)$ in the Proof of \cite[Lemma 3.6.1]{EMV}, i.e.\ $h(t)=\Ad_{\tilde\rho(u(t))}\mid_V\in\sl(V)$, where $V$ is an $h$-rational space for some $h\in \tGw$. 
	
	Now, one can follow the arguments in \cite[Proof of Lemma 3.6.1]{EMV} using the above notations, Lemma \ref{lem: inj radius}, \cite[Thm.\ 5.2]{Ghosh}, and \cite[Property N2]{Ghosh} to show that there exist positive constants $c,\ref{k:non-div norm bound}$ such that for any  $x\in\pi_v(Hg\Gamma)$ 
	\begin{equation}
	\label{e;no-thin-subspace}\mbox{there is no $x$-rational, $H_w$-invariant proper subspace of norm $\leq c p_{\tilde w}^{-\ref{k:non-div norm bound}}$,} 
	\end{equation} 
	where $c$ is an absolute constant and $\ref{k:non-div norm bound}$ depends only on $\dim\bG$.  
    
	Put~$\xi=c p_{\tilde w}^{-\ref{k:non-div norm bound}}$. Since the number of $x$-rational proper subspaces
	of norm at most $\xi$ is finite and $U\subset H_w$, a.e.\ $h\in H_w$ has the property that $hUh^{-1}$ does not leave invariant any proper~$x$-rational subspace of norm $\leq \xi$. 
	Alternatively, we may also conclude for a.e.~$h\in H_w\cap K'_v$
	that~$U$ does not leave any proper~$hx$-rational subspace
	of norm~$\leq\xi$ invariant. 
	
	Since $\mu_g$ is $H$-invariant and $H$-ergodic, by Mautner phenomenon $\mu_g$ is also $U$-ergodic. 
	This also implies that the~$U$-orbit of $hx$ equidistributes with
	respect to $\mu_g$ for a.e.~$h$. We choose~$h\in H_w\cap K_v$
	so that both of the above properties hold for~$x'=hx$.
	
	Let $h'$ be such that $x'=h'\Gamma_{\tw}$. Then, for any $h'\Gamma_{\tw}$-rational subspace $V$, if we let \[
	\psi_V(t):=\norm{\Ad_{u(t)}^{-1}V}_{u(t)^{-1}h'}=\norm{\Ad_{u(t)}^{-1}(V\cap \Ad_{h'}\tilde{\gfrak}_{\mathcal O})},\]
	then either $\psi_V$ is unbounded or equals a constant $\geq \xi.$  
	Thus, by~\cite[Thm 4.4]{Ghosh} there exists a positive constant $\constk\label{non-div-proof-2}$ 
	so that
	\begin{equation*}\label{e;non-div}
	\mu_g(\{t:|t|_{\tilde w}\leq r,x' u(t)\notin\Siegel(\epsilon^{-1})\})
	\ll p_{\tilde w}^{\ref{non-div-proof-2}}(\tfrac{\epsilon}{\xi})^{\alpha} \mu_g(\{t:|t|_{\tilde w}\leq r\}),
	\end{equation*}
	for all large enough $r$ and $\epsilon>0$,  where~$\alpha=\ref{k:non-divergence expon}$ only depends on the degree of the polynomials appearing in the matrix entries for the elements of the one-parameter unipotent subgroup~$U$.
	The lemma now follows as the~$U$-orbit equidistributes with respect to~$\mu_g$.
\end{proof}

\section{Invariant subgroups for the measure \texorpdfstring{$\mu$}m}

Recall that $\mu$ is an $H_g$-invariant measure on $Y_g=g\textbf{H}(\bba)/\textbf{H}(F)$. In this section, we will use tools from homogeneous dynamics combined with algebraic properties of good places which have been established in previous sections to prove Theorem~\ref{thm:main}. When $X$ is not compact, the proof will also use the non-divergence result proved in Lemma~\ref{prop;non-div}.

\subsection{Adelic Sobolev norms} \label{sec:sob-intro}

Let $C^{\infty}(X)$ denote the space of functions which are invariant by a compact open subgroup of $\G(\adele)$. We follow \cite[App.~A]{EMMV} to define Sobolev norms on $C^\infty(X)$, and present some of its properties. 

First, we define a system of projections $\operatorname{pr}_v[m]$ for any unitary $\G(F_v)$-representation for each place $v$ such that $\sum_{m\geq 0}\operatorname{pr}_v[m]=1$.
For any $v\in\Sigma$, let $\operatorname{Av}_v[m]$ be the projection on $K_v[m]$-invariant vectors --- this is simply given by averaging over the compact open subgroup $K_v[m]$ equipped with the probability Haar measure. Put $\operatorname{pr}_v[0]:=\operatorname{Av}_v[0]$ and $\operatorname{pr}_v[m]:= \operatorname{Av}_v[m] - \operatorname{Av}_v[m-1]$ for $m\geq1$.

Let $\mathcal{M}$ be the set of functions mapping $\Sigma$ to the non-negative integers, such that almost every $v$ is mapped to zero.
For any $\underline{m}\in\mathcal{M}$ let \[ \|\underline{m}\|:= \prod_{v} q_v^{m_v},\text{ and}\quad\operatorname{pr}[\underline{m}] := \prod_{v} \operatorname{pr}_v[m_v].\]
Then $\operatorname{pr}[\underline{m}]$ acts on any unitary $\G(\adele)$-representation, 
and $\sum_{\m}\operatorname{pr}[\m]f=f$ for any $f\in C^\infty(X)$. Moreover, for any $f\in C^\infty(X)$ and $\underline{m}\in \mathcal{M}$, the function  $\operatorname{pr}[\underline{m}]f$ is invariant under $K_{\underline{m}}:=\prod_{v}K_v[m_v]$.

Given an integer $d\geq0$ we define a degree~$d$ Sobolev norm by
\begin{equation} \label{Sobnormdef} 
\Sob_d(f)^2 := \sum_{\m} 
 \|\m\|^d\|\operatorname{pr}[\m](1+\height(x))^{d} f(x)\|_2^2,
\end{equation}
where $\operatorname{ht}(\cdot)$ is as defined in \S\ref{s;nondivergence}. 
For a compactly supported smooth function on $X$ any of these Sobolev norms is finite.
It is also easy to see that
\begin{equation}
    \text{$\Sob_d(f)\leq\Sob_{d'}(f)\quad$ if $d<d'.$}
\end{equation}

We claim that in a similar way to \cite[App.~A]{EMMV}, we have the following:

\begin{enumerate}
\item [S0.] For any $d$, $\Sob_d$ is a pre-Hilbert norm on $C_c^\infty(X)$.

\item[S1.] There exists $d_0$, depending on~$N$, and~$[F:\tilde F]$ (see \S\ref{s;nondivergence} for the definition of~$\tilde F$) such that for all $d\geq d_0$, we have $\|f\|_{L^{\infty}}\ll_d\Sob_d(f)$.

\item[S2.] Given $d_0,$ there are $d>d'>d_0$ and an orthonormal basis $\{e_k\}$ of the completion of $C_c^\infty(X)$ with respect to $\Sob_{d}$ which is orthogonal with respect to $\Sob_{d'}$ such that \[
\mbox{$\sum_k\Sob_{d'}(e_k)^2<\infty\quad$ and $\quad\sum_k\tfrac{\Sob_{d_0}(e_k)^2}{\Sob_{d'}(e_k)^2}<\infty.$} \]

\item[S3.] 
Let $g\in \G(\adele)$ and $f\in C_c^\infty(X)$. 
We write~$g.f$ for the  action of~$g$ on~$f$, and $\|g\|=\prod_{v\in\Sigma}\|g_v\|$. Then, for every $d\geq 0$ 
\[
\Sob_d(g.f)\ll \|g\|^{4Nd}\Sob_d(f),
\]
where the implied constant is absolute.
If in addition~$g\in K$, then we have~$\Sob_d(g.f)=\Sob_d(f)$.
\end{enumerate}

\medskip
Now let us fix a good place~$w$. Note that for every $\lambda\in\Phi$, we have $\|u_\lambda'(s)\|\leq(1+|s|_w)^N$ for all $s\in F_w.$

\begin{enumerate}
\item[S4.] 
For any $d\ge d_0$, any $r\geq0$, any $g\in K_w[r]$, and any $f\in C_c^\infty(X)$, we have 
\[
\|g.f-f\|_\infty\leq q_w^{-r}\Sob_d(f).
\]

\item[S5.] There exist $d_0,M$, which depend only on~$N$, so that the following holds. For every $\levelbound$, let $\operatorname{Av}_L$ be the operation of averaging over $K_w[\levelbound]$.
Let $\lambda\in\Phi$ and $s\in F_w$. Put $\mathbb{T}_s=\operatorname{Av}_\levelbound \star\, \delta_{u_\lambda'(s)} \star \operatorname{Av}_\levelbound$ where $\star$ denotes convolution of operators. 
For all $x\in X$, all $f\in C_c^\infty(X)$, and~$d\geq d_0$ we have
\[
\Bigl|\mathbb{T}_sf(x)-\int f\operatorname{d}\!m_X\Bigr|\ll q_w^{(d+2)\levelbound}\height(x)^d \|\mathbb{T}_s\|_{2,0}\Sob_d( f),
\]
where $\|\mathbb{T}_s\|_{2,0}$ denotes the operator norm of $\mathbb{T}_s$ on $L^2_0(X,m_X)$. 
Moreover, we have 
\begin{equation}\label{eq: Ts 2-norm}
    \|\mathbb T_s\|_{2,0}\ll (1+|s|_w)^{-1/2M} q_w^{2 d \levelbound}
\end{equation}

\item[S6.] There exist $d_0,M$, which depend only on~$N$, so that for all $d\geq d_0$ and all $\lambda\in\Phi$, we have 
\begin{equation}\label{eq:sobnormmc}
\Big| \langle u_\lambda(s) f_1, f_2 \rangle_{L^2(\mu)} - 
	\int f_1\operatorname{d}\!\mu \int\bar f_2\operatorname{d}\!\mu \Big|\\ 
 \ll (1+|s|_w)^{-1/2M} \Sob_{d}(f_1) \Sob_d(f_2).
\end{equation}
\end{enumerate}

Indeed, the above properties can be shown arguing in a similar way to \cite[App.~A4]{EMMV}. We only present a brief guided tour of the proof, and the implied changes in the statements. In the proof of property S1., Lemma \ref{lem: inj radius} replaces \cite[Lemma 7.2]{EMMV}, and so the implied constant only depends on $N$ and $[F:\tilde F]$. In the proof of property S3., for any place $v$ and $g\in G_v$, the inequality \[
\operatorname{ht}(gx)\ll\norm{g}^{2N}\operatorname{ht}(x)\] 
follows from the definition of $\operatorname{ht}(x)$ (see \S\ref{s;nondivergence}), and so (except possibly a different power in the result) the proof is the same. Properties S5.\ and S6.\ follow from $1/M$-temperedness (see \cite[Thm.~2]{CHH1988}) of the natural representation of $\G(F_w)$ on $L^2_0(X,m_X)$ and $L^2_0(X,\mu)$, respectively. In the case where $\G$ is type $A_1$, this follows from property $(\tau)$,~\cite{Drinfeld}. In all other cases, $\G$ has absolute rank at least 2, and $w$ is so that $\G$ is $F_w$-split (see \S\ref{sec: good place}). Therefore, $\G(F_w)$ has property $(T)$ and the desired $1/M$-temperedness follows from~\cite{HeeOh-T}.

\subsection{Generic points}\label{sec:generic pts} 
Let $\lambda\in\Phi$.  
Let $n\in\bbn$. For any $x\in X$ and $t=\varpi_w^{-n}$, define
\begin{equation}\label{def: averaging operator} D_nf(x):=q_w^{n}\int_{B(0,q_w^{-n})}f(a_\lambda'(t) u_\lambda'(s)x)\diff\!s-\int_Xf\diff\!\mu;
\end{equation}
see the beginning of~\S\ref{sec:effective generation}, more specifically~\eqref{eq: definition of A'}, for the definition of $a'_\lambda(t)$ and $u'_\lambda(s)$. In particular, we have $a'_\lambda(t), u'_{\lambda}(s)\in H_{g,w}\subset H_g$ for all $t,s,\lambda$. Therefore, they preserve the measure $\mu$. 

A point $x\in X$ is called $n_0$-{\em generic} for $\lambda$ with respect to the Sobolev norm $\scal_d$, if for any integer $\ell\geq n_0$, we have   
\begin{equation}\label{generic}
|D_\ell f(x)|\leq q_w^{-\ell/8M}\scal_d(f)
\end{equation}
where $M$ is as in~\eqref{eq:sobnormmc}.

\begin{proposition}\label{prop:generic set}
If $d_0$ is chosen large enough, depending on $N$, then for all $d\geq d_0$ the $\mu$-measure of points which are not $n_0$-generic for some $\lambda\in\Phi$ with respect to $\scal_d$ is $\ll q_w^{-n_0/5M}$. 
\end{proposition}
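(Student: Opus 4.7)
The plan is to run a second-moment (variance) argument followed by Chebyshev's inequality, and then upgrade the resulting $f$-dependent bound to a uniform statement via the Sobolev orthonormal basis from property S2.

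First I would control $\|D_n f\|_{L^2(\mu)}^2$. Writing $f^0 = f - \int f\,\diff\mu$ and using that $a_\lambda'(\varpi_w^{-n})$ preserves $\mu$, one expands $\|D_n f\|_{L^2(\mu)}^2$ as a double integral over $B(0,q_w^{-n})\times B(0,q_w^{-n})$ and then uses the Chevalley relation $a_\lambda(t)u_\lambda(s)a_\lambda(t)^{-1} = u_\lambda(t^2 s)$ (valid for $a_\lambda', u_\lambda'$ via \eqref{eq: a and a' conjugation}) together with the substitution $v = \varpi_w^{-2n}s$, $v' = \varpi_w^{-2n}s'$. This yields
\[
\|D_n f\|_{L^2(\mu)}^2 \ll q_w^{-n}\int_{|v|_w\leq q_w^n}\bigl|\langle u_\lambda'(v) f^0, f^0\rangle_{L^2(\mu)}\bigr|\,\diff v,
\]
after integrating out one of the two shifted variables using the ultrametric inequality. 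Plugging in the matrix coefficient estimate \eqref{eq:sobnormmc} and evaluating the resulting integral shell by shell (the contribution is dominated by $|v|_w\approx q_w^n$, giving $q_w^{n(1-1/2M)}$), I obtain
\[
\|D_n f\|_{L^2(\mu)}^2 \ll q_w^{-n/2M}\,\Sob_d(f)^2.
\]

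Chebyshev's inequality then gives, for each fixed $f$, $\lambda\in\Phi$, and $n\geq n_0$,
\[
\mu\bigl(\{x : |D_n f(x)|>q_w^{-n/8M}\Sob_d(f)\}\bigr)\ll q_w^{n/4M}\cdot q_w^{-n/2M}=q_w^{-n/4M}.
\]
Summing the geometric series over $n\geq n_0$ and over the finite set $\Phi$ produces a bound of the form $\ll q_w^{-n_0/4M}$, which is better than the claimed $q_w^{-n_0/5M}$.

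The only remaining issue is uniformity in $f$: the bad set produced above depends on $f$, but the proposition asks for a single exceptional set valid for all $f$ in the Sobolev space. Here I would invoke property S2 to fix an orthonormal basis $\{e_k\}$ for the $\Sob_d$-completion with the summability $\sum_k \Sob_{d'}(e_k)^2<\infty$ for some $d'<d$, apply the Chebyshev argument above to each $e_k$ (with slightly different thresholds absorbing the weights $\Sob_{d'}(e_k)$), and use Cauchy--Schwarz on the expansion $f=\sum_k c_k e_k$ to obtain a pointwise bound for $|D_\ell f(x)|$ valid outside the union of these bad sets. The summability supplied by S2 keeps the total measure of the exceptional set of the correct order. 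I expect the main technical nuisance (rather than a real obstacle) to be the bookkeeping in the variance computation: tracking the Jacobian $|\varpi_w^{-2n}|_w$ against the shrinking ball $B(0,q_w^{-n})$ and verifying that the expanded range $|v|_w\leq q_w^n$ captures enough of the matrix coefficient decay to produce the crucial $q_w^{-n/2M}$ savings — everything else is standard.
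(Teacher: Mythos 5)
Your proposal is correct and follows essentially the same strategy as the paper: a second-moment bound for $D_\ell f$ using the mixing estimate of property S6, followed by Chebyshev and the orthonormal-basis trick of property S2 to make the exceptional set independent of $f$. The only difference is cosmetic — where the paper splits $B\times B$ into a near-diagonal set $A_1$ of small measure and its complement $A_2$ (applying~\eqref{eq:sobnormmc} on $A_2$ and a trivial bound on $A_1$), you instead change variables $v=\varpi_w^{-2n}(s_1-s_2)$ and integrate shell by shell, which is a slightly cleaner bookkeeping of the same estimate and even yields a marginally stronger exponent ($q_w^{-n/2M}$ versus the paper's $q_w^{-9\ell/20M}$).
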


\begin{proof}
Let $\lambda\in\Phi$ be arbitrary.  
For simplicity in the notation, we will write $a'(t)$ and $u'(s)$ for $a'_\lambda(t)$ and $u'_\lambda(s)$, respectively. 

Let $\ell\geq n_0$ be an integer and $d_0$ satisfy~\eqref{eq:sobnormmc}.
Let $f$ be a fixed function in $C_c^\infty(X)$. 
Let us write $B=B(0,q_w^{-\ell})$. Put
\[
A_1=\{(s_1,s_2)\in B\times B: |s_1-s_2|_w<q_w^{-11\ell/10}\}
\]
and $A_2=B\times B\setminus A_1$. 

For $(s_1,s_2)\in B\times B$, put $F(s_1,s_2)=\langle a'(t)u'(s_1-s_2)a'(t^{-1})f, f\rangle_{\mu}$. If $(s_1,s_2)\in A_2$ and $|t|_w\geq q_w^{\ell}$, then $a'(t)u'(s_1-s_2)a'(t^{-1})=u'(\tau)$ with $|\tau|\geq q_w^{0.9\ell}$. Hence, for any $(s_1,s_2)\in A_2$ and $|t|_w\ge q_w^\ell$, by~\eqref{eq:sobnormmc} we have 
\[
\Bigl|F(s_1,s_2)-\Bigl(\int_Xf\diff\!\mu\Bigr)^2\Bigr|\ll q_w^{-9\ell/20M}\Sob_{d_0}(f)^2.
\]

Since the Lebesgue measure of $A_1$ is $q^{-21\ell/10}$, using the definition of $D_\ell f(x)$ and the Fubini's theorem, now we have  
\begin{align*}
\int_X|D_\ell f(x)|^2\diff\!\mu&=q_w^{2\ell}\int_B\int_BF(s_1,s_2) \diff\!s_1\diff\!s_2-\Bigl(\int_Xf\diff\!\mu\Bigr)^2 \\ 
&=q_w^{2\ell}\Bigl(\int_{A_1}F(s_1,s_2)\diff\!s_1\diff\!s_2+\int_{A_2}F(s_1,s_2)\diff\!s_1\diff\!s_2\Bigr)-\Bigl(\int_Xf\diff\!\mu\Bigr)^2\\
&\ll 3q_w^{-9\ell/20M}\Sob_{d_0}(f)^2
\end{align*}
Using Markov's inequality we arrive at
\begin{equation}\label{eq:chebechev}
\mu(\{x\in X:|D_\ell f(x)|>\vare\})\ll 3{\vare^{-2}q_w^{-9\ell/20M}\scal_{d_0}(f)^2}.
\end{equation}

To conclude, we use property S2.\ of the Sobolev norms. That is, there are $d>d'>d_0$ and an orthonormal basis $\{e_k\}$ of the completion of $C_c^\infty(X)$ with respect to $\Sob_{d}$ which is orthogonal with respect to $\Sob_{d'}$ so that 
\begin{equation}\label{eq:reltr-cont}
\mbox{$\sum_k\Sob_{d'}(e_k)^2<\infty\quad$ and $\quad\sum_k\tfrac{\Sob_{d_0}(e_k)^2}{\Sob_{d'}(e_k)^2}<\infty$.} 
\end{equation}
Put $c=(\sum_k\Sob_{d'}(e_k)^2)^{-1/2}$ and define 
\begin{equation}\label{eq:genericset}
X'=\bigcup_{\ell\geq n_0, i\geq 1}\Bigl\{x\in X: |D_\ell{e_i}(x)|\geq c q_w^{-\ell/8M}\scal_{d'}(e_i)\Bigr\}.
\end{equation} 
Using~\eqref{eq:chebechev} and~\eqref{eq:reltr-cont}, we have 
\begin{equation}\label{eq:genericmeasure}
\mu(X')\ll q_w^{-n_0/5M}.
\end{equation} 

Let $f \in C_c^\infty(X)$, and write $f=\sum f_k e_k$ and  suppose $x \not\in X'$.  
Let $\ell\geq n_0$, then using the triangle inequality for $D_\ell$ we obtain
\begin{align*} 
|D_\ell f(x)| &\leq \sum_k |f_k||D_\ell e_k(x)|\leq c q_w^{-\ell/8M} \sum_k |f_k| \Sob_{d'}(e_k)\\
&\leq cq_w^{-\ell/8M} \Bigl(\sum_{k} |f_k|^2\Bigr)^{1/2} \Bigl(\sum_k \Sob_{d'}(e_k)^2 \Bigr)^{1/2}
\\ &= q_w^{-\ell/8M} \Sob_{d}(f),
\end{align*}
where  the last inequality follows from the definition of $c$ and since $\{e_i\}$ is an orthonormal basis with respect to $\Sob_d$.

This completes the proof in view of~\eqref{eq:genericmeasure} as $\lambda$ was arbitrary. 
\end{proof}

For the rest of this section, we fix some $d\geq d_0$ so that Proposition~\ref{prop:generic set} holds, and write $\Sob=\Sob_d$.

For every $m$, let $Q_m=\bigl\{((g_v),1)\in K: g_w\in \pi_w^1(K_w[m])\bigr\}$.

\begin{proposition}\label{prop:almost inv1}
There exists $\constk\label{k:almost inv1}$ such that for all $c\in\bbn$ and all large enough $m\geq 3c$, the following holds. 
Let $\disg\in Q_m$ satisfy $\disg_w\neq 1$ and 
\[
\|\disg_{w,\lambda}-I\|\geq q_w^{-c}\|\disg_w-I\|.
\]
Let $\lambda\in \Phi$. Suppose $y_1, y_2\in X$ are two $\lfloor m/10\rfloor$-generic points for the measure $\mu$ with respect to $\Sob$ satisfying that $y_2=\disg y_1$. Then,  
\[
|u_\lambda(\tau).\mu(f)-\mu(f)|\ll q_w^{-m\ref{k:almost inv1}}\Sob(f)
\]
for all $\tau\in B(0,1)$ and all $f\in C_c^\infty(X)$. 
\end{proposition}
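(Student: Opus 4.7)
The plan is to extract almost-invariance of $\mu$ under a single unipotent element $u_\lambda(\tilde s_0)$ with $|\tilde s_0|_w$ in a bounded window, by exploiting $y_2=\disg y_1$ together with the genericity of $y_1,y_2$, and then to bootstrap to every $\tau\in B(0,1)$ using the $a'_\lambda$-invariance of $\mu$. The key observation is that conjugating $\disg$ by $a'_\lambda(t)$ for $|t|_w$ near the maximum allowed by Lemma~\ref{lem:sl2-rep} produces an element very close to $u_\lambda(\tilde s_0)$ for a specific $\tilde s_0$ of controlled size.

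Concretely, first I would let $m_g\geq m$ be the exact level of $\disg_w$, write $\disg_{w,\lambda}=u_\lambda(s_0)$ so that $q_w^{-m_g-c}\leq|s_0|_w\leq q_w^{-m_g}$, and set $\ell:=\lfloor(m_g-c)/2\rfloor$, $t:=\unift_w^{-\ell}$, $\tilde s_0:=t^2s_0$; the hypothesis $m\geq 3c$ places $|t|_w$ in the range of Lemma~\ref{lem:sl2-rep} and forces $|\tilde s_0|_w\leq q_w^{-c}\leq 1$. That lemma would then give a factorization $a'_\lambda(t)u'_\lambda(s)\disg u'_\lambda(-s)a'_\lambda(t^{-1})=h(s)\,u_\lambda(\tilde s_0)$ with $h(s)\in K_w[\ell-c-C_0]$ for an absolute $C_0$, valid for every $|s|_w\leq q_w^{-\ell}$. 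Setting $\tilde f(z):=f(u_\lambda(\tilde s_0)z)$, so $\Sob(\tilde f)\ll\Sob(f)$ by~S3 and $|\tilde s_0|_w\leq 1$, property~S4 allows the $\ell$-scale average over $\disg y_1$ to be rewritten as the analogous average over $y_1$ of $\tilde f$ plus error $q_w^{-(\ell-c-C_0)}\Sob(f)$. Unpacking, the two averages equal $D_\ell f(y_2)+\mu(f)$ and $D_\ell\tilde f(y_1)+\mu(\tilde f)$ respectively, and the genericity of $y_1,y_2$ (which applies since $\ell\geq m/3\geq\lfloor m/10\rfloor$) bounds each $D_\ell$-term by $q_w^{-\ell/8M}\Sob(f)$. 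Combining the estimates yields $|\mu(\tilde f)-\mu(f)|\ll q_w^{-\kappa_1 m}\Sob(f)$ for some $\kappa_1>0$ depending only on $N,M,c$.

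To pass from $u_\lambda(\tilde s_0)$ to an arbitrary $u_\lambda(\tau)$, I would use that $\mu$ is $a'_\lambda(t_0)$-invariant: applying the previous estimate with $f$ replaced by $a'_\lambda(t_0).f$ and invoking~\eqref{eq: a and a' conjugation} to identify $a'_\lambda(t_0)^{-1}u_\lambda(\tilde s_0)a'_\lambda(t_0)=u_\lambda(t_0^{-2}\tilde s_0)$, together with $\Sob(a'_\lambda(t_0).f)\ll\|a'_\lambda(t_0)\|^{4Nd}\Sob(f)$, gives almost-invariance under $u_\lambda(t_0^{-2}\tilde s_0)$ with error $q_w^{-\kappa_1 m}\|a'_\lambda(t_0)\|^{4Nd}\Sob(f)$. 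For $\tau$ with $|\tau|_w\geq q_w^{-\kappa_2 m}$ one solves $t_0^{-2}\tilde s_0=\tau$, obtaining $\|a'_\lambda(t_0)\|\leq q_w^{O(\kappa_2 m)}$, and taking $\kappa_2<\kappa_1/(8Nd)$ yields the desired bound; for smaller $|\tau|_w$ one has $u_\lambda(\tau)\in K_w[\lceil\kappa_2m\rceil]$ and~S4 applies directly. Taking $\kappa_{\ref{k:almost inv1}}$ to be a suitable positive fraction of $\min(\kappa_1,\kappa_2)$ would conclude the argument.

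The main obstacle is the balancing act in the first step. Lemma~\ref{lem:sl2-rep} caps $|t|_w$ at $q_w^{(m_g-c)/2}$, while the Sobolev smoothness error $q_w^{-(\ell-c)}$ and the genericity error $q_w^{-\ell/8M}$ both demand $\ell$ large; the only choice that wins on all fronts is $\ell\asymp m_g\asymp m$. Equally important is the a priori guarantee that $|\tilde s_0|_w$ lies in a fixed window $[q_w^{-2c},q_w^{-c}]$ independent of~$m$, rather than shrinking with~$m$: otherwise the conjugation cost $\|a'_\lambda(t_0)\|^{4Nd}$ in the bootstrap step would destroy the exponential decay.
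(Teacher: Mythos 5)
Your extraction step has the right architecture — conjugate $\disg$ by $a'_\lambda(t)u'_\lambda(s)$, choose $|t|_w=q_w^\ell$ with $\ell\asymp m$, invoke Lemma~\ref{lem:sl2-rep}, and compare genericity averages at $y_1$ and $y_2=\disg y_1$ — and you correctly identify that the window $|\tilde s_0|_w\gg q_w^{-O(c)}$ must be $m$-independent. But there is a genuine gap in the conclusion you draw from it. Since $\disg\in Q_m$, its components $\disg_v$ at $v\neq w$ are arbitrary elements of $K_v$, and conjugation by $a'_\lambda(t)u'_\lambda(s)\in H_{g,w}$ leaves them untouched. Consequently what the genericity comparison actually gives is almost-invariance of $\mu$ under $\hat\disg$, where $\hat\disg_w=u_\lambda(\tilde s_0)$ but $\hat\disg_v=\disg_v$ for $v\neq w$ — this is exactly the paper's~\eqref{eq: almost inv2}. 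Your $\tilde f(z)=f(u_\lambda(\tilde s_0)z)$ silently discards the off-$w$ part, so the claimed estimate $|\mu(\tilde f)-\mu(f)|\ll q_w^{-\kappa_1 m}\Sob(f)$ does not follow; you would instead need to work with $\hat\disg^{-1}.f$.

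The paper's fix is the step you are missing: it forms $\hat\disg_\rho=a'_\lambda(\rho)\hat\disg\,a'_\lambda(\rho^{-1})$ for $|\rho|_w=1$ (so the conjugation is free of Sobolev penalty by S3), and observes that $\hat\disg^{-1}\hat\disg_\rho=u_\lambda\bigl((\rho^2-1)\tau_0\bigr)$ — the off-$w$ components cancel. This also simultaneously repairs a second gap in your bootstrap: solving $t_0^{-2}\tilde s_0=\tau$ requires $\tilde s_0/\tau$ to be a square in $F_w^\times$, which fails on an index-$2$ (or larger) subgroup of cosets, so your rescaling cannot reach all $\tau\in B(0,1)$. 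The set $\{\rho^2-1:|\rho|_w=1\}$, by contrast, contains a fixed ball $B(0,\ref{k:implicit func})$ when ${\rm char}(F)>2$ (Lemma~\ref{lem:implicit function}); once one has almost-invariance on a genuine ball around $0$, rescaling by $a'_\lambda(\varpi_w^{-k})$ with $k=O(c)$ covers all of $B(0,1)$ with only a bounded Sobolev penalty. Finally, your proposal does not address ${\rm char}(F)=2$, where $\rho^2-1=(\rho-1)^2$ is again a square and the ball is lost; the paper needs the separate generation result Lemma~\ref{lem:char 2} for this case.
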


\begin{proof}
As in the proof of Proposition \ref{prop:generic set}, we write $a'(t),u'(s)$ for $a'_\lambda(t),u'_\lambda(s)$; we also write $a(t),u(s)$ for $a_\lambda(t),u_\lambda(s)$. 
Recall from~\eqref{eq: a and a' conjugation} that
\[
a(t) \disg a(t^{-1})= a'(t)\disg a'(t^{-1})\;\;\text{and}\;\; u(s) \disg u(-s)= u'(s)\disg u'(-s).
\]

Let us also recall that $a'(t)$ and $u'(s)$ belong to $H_g$, hence, they leave $\mu$ invariant; while 
$a(t), u(s)\in G_w$ and we do not have any a priori information regarding there action on $\mu$.

For all $s$ and $t$, put
\[
\disg_{s,t}=a'(t)u'(s)\disg u'(-s)a'(t^{-1});
\]
note that $\disg_{s,t,v}=\disg_{v}$ for all $v\neq w$. 

Apply Lemma~\ref{lem:sl2-rep} with $m_1=c$, and let $m\geq 3c$.
Let us write $\ell=\lfloor (m_{\disg_w}-c)/2\rfloor$ where 
$\disg_w\in K_w[m_{\disg_w}]\setminus K_w[m_{\disg_w}+1]$.
Let $|t|_w=q_w^{\ell}$; we will always assume $|s|_w\leq |t|_w^{-1}$.
By Lemma~\ref{lem:sl2-rep}, we have 
\begin{equation}\label{eq:conjugate lambda}
\Bigl\|\disg_{s,t,w}-a(t)\disg_{\lambda,w} a(t^{-1})\|\leq \ref{C:sl2-rep} q_w^{-\ell+c};
    \end{equation}
moreover, if $|t|_w=q_w^\ell\geq \ref{C:sl2-rep}q_w^{2c+3}$, then  
\begin{equation}
    \label{eq:conjugate big}\|\disg_{s,t,w}-I\|\geq q_w^{-c-2}.
\end{equation}

Recall now that $y_i$, $i=1,2$, is $\lfloor m/10\rfloor$-generic for $\mu$ w.r.t.\ $\Sob$. 
This, together with our choice of the integer $\ell\geq \lfloor m/10\rfloor$, implies that
\begin{equation}\label{eq:extra inv}
\Bigl|q_w^{\ell}\int_{B_w(0,q_w^{-\ell})}f(a'(t) u'(s)y_i)\diff\!s-\int_Xf\diff\!\mu\Bigr|\leq q_w^{-\ell/8M}\Sob(f)
\end{equation}
for $i=1,2$. 

Since $y_2=\disg y_1$, we get that $a'(t)u'(s)y_2=\disg_{s,t}a'(t)u'(s)y_1$. In particular, 
\[
f(a'(t) u'(s)y_2)=f(\disg_{s,t}a'(t) u'(s)y_1).
\]
Applying~\eqref{eq:extra inv} with $y_2$, we thus conclude that  
\begin{equation}\label{eq:conjugate lambda -1}
\Bigl|q_w^{\ell}\int_{B_w(0,q_w^{-\ell})}f(\disg_{s,t}a'(t) u'(s)y_1)\diff\!s-\int_Xf\diff\!\mu\Bigr|\leq q_w^{-\ell/8M}\Sob(f).
\end{equation}

Put $\hat\disg=a(t)\disg'a(t^{-1})$ where $\disg'_{v}=\disg_v$ for all $v\neq w$, and $\disg_{w}'=\disg_{w,\lambda}$. 
Then~\eqref{eq:conjugate lambda -1} and~\eqref{eq:conjugate lambda} imply that if $m\geq 5c$, then 
\[
\Bigl|q_w^{\ell}\int_{B_w(0,q_w^{-\ell})}f\Bigl(\hat\disg a'(t) u'(s)y_1\Bigr)\diff\!s-\int_Xf\diff\!\mu\Bigr|\ll q_w^{-\ell/9M}\Sob(f).
\]
The above,~\eqref{eq:extra inv}, applied with $y_1$ and $\hat\disg^{-1}.f$, and property S3.\ yield
\begin{equation}\label{eq: almost inv2}
|\hat\disg.\mu(f)-\mu(f)|\ll  q_w^{-\ell/9M}\Sob(f).
\end{equation}

Recall that $\|\disg_{w,\lambda}-I\|\geq q_w^{-c}\|\disg_w-1\|$. Therefore, if we write $u(\tau_0)=a(t)\disg_{w,\lambda} a(t^{-1})$, then $q_w^{-c-2}\leq |\tau_0|_w\leq 1$. 

For every $\rho$, put $\hat\disg_\rho:=a'(\rho)\hat\disg a'(\rho^{-1})$.
Then $\hat\disg_{v,\rho}=\hat\disg_v$ and $\hat\disg_{w,\rho}=u(\rho^2\tau_0)$. 
Moreover, we have 
\begin{equation}\label{eq:iteration}
\begin{aligned}
|\hat\disg_\rho.\mu(f)-\mu(f)|&=|\hat\disg.\mu(a'(\rho^{-1}).f)-\mu(a'(\rho^{-1}).f)|\\
&\ll q_w^{-\ell/9M}\scal(a'(\rho^{-1}).f). 
\end{aligned}
\end{equation}
where we use the fact that $\mu$ is $H_g$-invariant in the first equality and~\eqref{eq: almost inv2} in the second line.  
In view of property S3., we have $\scal(a'(\rho^{-1})f)\ll \max\{|\rho|_w^{4Nd},|\rho|_w^{-4Nd}\}\scal(f)$. This and~\eqref{eq:iteration}, imply 
\[
\Bigl|\hat\disg_\rho.\mu(f)-\mu(f)\Bigr|\ll q_w^{-\ell/9M}\max\{|\rho|_w^{4Nd}, |\rho|_w^{-4Nd}\}\scal(f).
\]
Recall now that $\hat\disg^{-1}\hat\disg_{\rho}=u((\rho^2-1)\tau_0)$. Altogether we cocnlude that    
\[
\bigl|u((\rho^2-1)\tau_0).\mu(f)-\mu(f)\bigr|\ll q_w^{-\ell/9M}\scal(f).
\]
for all $|\rho|_w=1$. We now consider two cases:

\medskip

{\em Case 1}: ${\rm char}(F)>2$. Then by Lemma~\ref{lem:implicit function}, $B(0,\ref{k:implicit func})\subset \{\rho^2-1: |\rho|_w=1\}$. This and the fact that $|\tau_0|_w\geq q_w^{-c-2}$, imply that 
\[
\bigl|u(\tau).\mu(f)-\mu(f)\bigr|\ll q_w^{-\ell/9M}\scal(f)
\]
for all $\tau\in B(0, q_w^{-c-2}\ref{k:implicit func})$. 
Using $u(\rho^2\tau)=a'(\rho)u(\tau)a'(\rho^{-1})$ and arguing as in~\eqref{eq:iteration} again, we conclude that so long as $m$ (and hence $\ell$) is large enough, 
\[
|u(\tau).\mu(f)-\mu(f)|\ll q_w^{-\ell/10M}\scal(f)
\]
for all $\tau\in B(0,1)$; the proof is complete in this case.  

\medskip

{\em Case 2}: ${\rm char}(F)=2$. Then by Lemma~\ref{lem:implicit function}, we have 
\[
\bigl|u(\tau).\mu(f)-\mu(f)\bigr|\ll q_w^{-\ell/9M}\scal(f)
\]
for all $\tau=\tau_0 \alpha^2$ and $\alpha\in B(0,1)$. Using $u(\rho^2\tau)=a'(\rho)u(\tau)a'(\rho^{-1})$ and arguing as in~\eqref{eq:iteration} again, we conclude that so long as $m$ is large enough, 
\begin{equation}\label{eq: alm inv char 2}
    |u(\tau).\mu(f)-\mu(f)|\ll q_w^{-\ell/10M}\scal(f)
\end{equation}
for all $\tau=\tau_0\alpha^2$ with $|\tau|\leq q_w^{c+2}$ (recall that $q_w^{-c-2}\leq |\tau_0|\leq 1$). 

Now by Lemma~\ref{lem:char 2}, for every $r\in B(0,1)$ 
\[
u_\lambda(r)= h_1\cdots h_{\ell'}
\]
where $\ell'$ is absolute and $h_i\in H_{g,w}$ with $\|h_i\|\leq 1$ or $h_i\in \{u_{\lambda}(\tau_0\alpha^2): |\tau_0\alpha^2|\leq |\tau_0|^{-1}\}$. This and~\eqref{eq: alm inv char 2} complete the proof in this case as well.
\end{proof}

Recall that $\vol_{G'}$ denotes our fixed Haar measure on $G'$. 
Also recall that for every $m$, we let 
\[
Q_m'=\Bigl\{\bigl((g_v^1),(g^2_v)\bigr)\in K': (g_w^1, g_w^2)\in \pi_w(K_w'[m])\Bigr\}.
\]

\begin{lemma}\label{lem:pigeonhole}
Assume $n$ is so that  
\[
\vol_{G'}(Q_n')^{-1}\leq \frac{\vol(Y)}{4(\# {\bf C}(F))}\vol(Y),
\]
where ${\bf C}$ denotes the center of $\G'$. 
There exist points $y_1,y_2$ and $\lambda\in\Phi$ so that 
\begin{enumerate}
    \item $y_1$ and $y_2$ are $\lfloor n/20\rfloor$-generic for $\lambda$ and the measure $\mu$ w.r.t.\ $\Sob$.  
    \item $y_2=\disg y_1$ where $\disg\in Q_{n-m_0}\setminus {\rm Stab}(\mu)$. Moreover, $\disg_w\neq 1$ and we have  
    \[
    \|\disg_{w,\lambda}-I\|\geq q_w^{-m_0}\|\disg_w-I\|,
    \]
    where $m_0$ is as in Lemma~\ref{lem:Chevalley irred rep}. 
\end{enumerate}
\end{lemma}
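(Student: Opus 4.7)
I plan to combine three ingredients established above: Proposition~\ref{prop:generic set} (abundance of $\lfloor n/20\rfloor$-generic points), a volume-pigeonhole argument forcing a non-trivial self-intersection of $Y$ inside $Q_n'$, and Lemma~\ref{lem:adjustment-1} (to convert the resulting return element into one lying in $G^1$ with the prescribed $\lambda$-dominance). I assume throughout that $n$ is large enough that $\lfloor n/20\rfloor\geq n_0$ and $n\geq 6m_0+1$, and that all error terms of the form $q_w^{-cn}$ are small compared to the absolute constants below.

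\textbf{Volume pigeonhole.} I would define the return correspondence
\[
R=\bigl\{(y_1,y_2)\in Y\times Y:\, y_2=\disg y_1\text{ for some }\disg\in Q_n'\bigr\}.
\]
Unfolding $\mu$ as the image of the Haar measure on $H_g$ and using Lemma~\ref{lem: inj radius} to guarantee injectivity of this projection at scale $Q_n'$, a Fubini computation gives a lower bound of the shape $(\mu\times\mu)(R)\gtrsim\mathsf{m}_{G'}(Q_n')\cdot\vol(Y)$ (with implicit constant depending only on $\mathbf{G}$). On the other hand, the trivial returns with $\disg\in\mathrm{Stab}(\mu)\cap Q_n'=(H_g\mathbf{C}(F))\cap Q_n'$ contribute at most $\#\mathbf{C}(F)\cdot\mathsf{m}_g(Q_n'\cap H_g)\leq \#\mathbf{C}(F)/\vol(Y)$ to this measure. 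The hypothesis $\mathsf{m}_{G'}(Q_n')^{-1}\leq\vol(Y)^2/(4\#\mathbf{C}(F))$ is precisely what makes the trivial contribution at most one quarter of the total, so there is a positive-measure subset of $R$ consisting of non-trivial returns $\disg'\in Q_n'\setminus\mathrm{Stab}(\mu)$.

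\textbf{Genericity and adjustment.} Proposition~\ref{prop:generic set} applied over each $\lambda\in\Phi$ (a finite set) shows that the set of $\lfloor n/20\rfloor$-generic points has $\mu$-measure at least $1-O(q_w^{-n/(100M)})$, which for $n$ large exceeds any fixed positive fraction. I therefore pick a pair $(y_1',y_2')\in R$ with both coordinates generic and some $\disg'\in Q_n'\setminus\mathrm{Stab}(\mu)$. I then apply Lemma~\ref{lem:adjustment-1} to $\disg'$ with $E_1,E_2\subset K_wA_{-m_0}K_w$ chosen by a Fubini/Markov argument so that $\alpha_i y_i'$ also remains generic --- this is possible because the ``bad'' set of $\alpha$'s (for fixed generic $y_i'$) has relative measure $\ll q_w^{-n/(100M)}$, much smaller than the slack $q_w^{-4m_0\dim\mathbf{G}}$ that the lemma permits. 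This yields $\alpha_1,\alpha_2\in\Delta_g(K_wA_{-m_0}K_w)\subset H_g$ and $\lambda\in\Phi$ such that
\[
\disg:=\alpha_2\disg'\alpha_1^{-1}\in Q_{n-m_0},\qquad\|\disg_{w,\lambda}-I\|\geq q_w^{-m_0}\|\disg_w-I\|.
\]
Setting $y_i:=\alpha_i y_i'$ gives $y_2=\disg y_1$ with both coordinates generic; $\disg\notin\mathrm{Stab}(\mu)$ because $\mathrm{Stab}(\mu)$ is $H_g$-normalized and $\disg'\notin\mathrm{Stab}(\mu)$; and $\disg_w\neq 1$ because otherwise $\disg=1$ and then $\disg'=\alpha_2^{-1}\alpha_1\in H_g\subset\mathrm{Stab}(\mu)$, a contradiction.

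\textbf{Main obstacle.} The principal technical point is establishing the clean lower bound $(\mu\times\mu)(R)\gtrsim\mathsf{m}_{G'}(Q_n')\cdot\vol(Y)$ together with the sharp upper bound on trivial returns. Both require controlling the local geometry of the projection $H_g\to Y\hookrightarrow X$ at scale $Q_n'$; this injectivity is ensured by Lemma~\ref{lem: inj radius} combined with the bound $q_w\ll(\log\vol Y)^2$ from Proposition~\ref{prop: splitting-place}, which makes the hypothesis consistent with the injectivity scale. Once the pigeonhole is set up correctly, the genericity bookkeeping during the adjustment step is routine.
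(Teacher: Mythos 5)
Your overall architecture matches the paper's: genericity abundance (Proposition~\ref{prop:generic set}) plus non-divergence (Proposition~\ref{prop;non-div}) to produce a large set $Y'_{\rm good}$ of generic points in a Siegel set, a pigeonhole forcing a non-trivial return in a $Q_n'$-cell, and then Lemma~\ref{lem:adjustment-1} with $E_i$ chosen so that $\alpha_i y_i'$ stays generic. However, two steps do not survive scrutiny.

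First, the claimed lower bound $(\mu\times\mu)(R)\gtrsim\mathsf{m}_{G'}(Q_n')\cdot\vol(Y)$ is off by a factor of $\vol(Y)$. Covering $\mathfrak{S}(R)$ by a disjoint family $\{Q_n'.x_i\}_{i\in I}$ with $\#I\leq\vol_{G'}(Q_n')^{-1}$ (using Lemma~\ref{lem: inj radius}) and applying Cauchy--Schwarz gives
\[
(\mu\times\mu)(R)\geq\sum_i\mu(Q_n'.x_i)^2\geq\frac{\mu(\mathfrak{S}(R))^2}{\#I}\gtrsim\vol_{G'}(Q_n'),
\]
with no $\vol(Y)$ factor; indeed $(\mu\times\mu)(R)\leq 1$, so your bound of size roughly $4\#\mathbf{C}(F)$ is impossible. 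The paper's phrasing (choose $i_0$ with $\mu(Q_n'.x_{i_0}\cap Y_{\rm good})\geq\tfrac{1}{2\#I}$ and compare with the trivial-return bound $\leq\#\mathbf{C}(F)\,\mathsf{m}(Q_n'\cap H)\leq\#\mathbf{C}(F)/\vol(Y)$) is the same estimate packaged differently. The reason your incorrect factor appears to ``work'' is that the written hypothesis of the lemma contains what looks like a typographical duplication of $\vol(Y)$; comparing with the contradiction actually derived in the paper's proof shows the intended hypothesis is $\vol_{G'}(Q_n')^{-1}\leq\vol(Y)/(4\#\mathbf{C}(F))$.

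Second, and more seriously, your justification that $\disg_w\neq 1$ is a genuine gap. You assert that $\disg_w=1$ would force $\disg=1$, but $\disg$ has components at every place: $\disg_v=\disg'_v$ for $v\neq w$, and these are not trivial in general. So $\disg_w=1$ says nothing about $\disg$ being trivial or about $\disg'$ lying in $H_g$. The paper's argument for this step is genuinely different: if $\disg_w=1$, then $\disg$ commutes with $a'_\lambda(t)u'_\lambda(s)\in G'_w$, hence $a'_\lambda(t)u'_\lambda(s)y_2=\disg\, a'_\lambda(t)u'_\lambda(s)y_1$; plugging this into the $\lfloor n/20\rfloor$-genericity estimate at both $y_1$ and $y_2$ forces $\mu(\disg^{-1}.f)=\mu(f)$ for all test functions, i.e. $\disg\in\mathrm{Stab}(\mu)$, contradicting part (2). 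You would need to reproduce this dynamical commutation argument; the algebraic shortcut you propose does not exist.

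Minor points: the ``bad'' set bookkeeping in the adjustment step is handled in the paper via a Fubini argument that only makes $\mathsf{H}\setminus\mathsf{H}_y$ small for $y$ in a \emph{large-measure subset} $Y_{\rm good}\subset Y'_{\rm good}$, with error of size $q_w^{-O(m_0\dim\G)}$; the exponent is governed by $m_0$, not by $n/M$. Also, the assertion that $\disg\notin\mathrm{Stab}(\mu)$ follows more directly from $\alpha_1,\alpha_2\in H_g\subset\mathrm{Stab}(\mu)$ and $\mathrm{Stab}(\mu)$ being a group, rather than from normalization.
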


\begin{proof}
We may assume $\vol(Y)$ is large enough so that $n$ maybe chosen with $\lfloor n/20\rfloor\geq 2m_0+1$. 
Let 
\[
Y_{\rm good}'(R)=\mathfrak{S}(R)\cap \{y\in Y: \text{ $y$ is $\lfloor n/20\rfloor$-generic}\}.
\]
Then by Propositions~\ref{prop:generic set} and~\ref{prop;non-div}, if $R\gg_{q_w} 1$ is large enough, we have 
    \begin{equation}\label{eq:Ygood is large}
    \mu(Y_{\rm good}'(R))\geq 1-q_w^{-11m_0\dim\G}.
    \end{equation}
Fix one such $R$ for the rest of the argument, and put $Y'_{\rm good}=Y'_{\rm good}(R)$.

Let $\mathsf H=\Delta_g(K_wA_{-m_0}K_w)$.
Put 
\[
Y_{\mathsf H}=\{(h,y)\in \mathsf H\times Y'_{\rm good}: hy\in Y'_{\rm good}\}. 
\]
Note that for every $h\in \mathsf H$ and all $y\in Y'_{\rm good}\cap h^{-1}Y'_{\rm good}$, we have $(h,y)\in Y_{\mathsf H}$. Since $\mathsf H\subset H_{g,w}$ leaves $\mu$ invariant, it follows from this observation and~\eqref{eq:Ygood is large} that for every $h\in\mathsf H$, the fiber $\{y: (h,y)\in Y_{\mathsf H}\}$ has measure $\geq 1-2q_w^{-11m_0\dim\G}$. Therefore, by Fubini's theorem, there exists a subset $Y_{\rm good}\subset Y'_{\rm good}$ with 
\[
\mu(Y_{\rm good})\geq 1-q_w^{-5m_0\dim\G}
\]
so that if for every $y\in Y_{\rm good}$ we put $\mathsf H_y=\{h\in \mathsf H: hy\in Y'_{\rm good}\}$, then 
\[
|\mathsf H\setminus \mathsf H_y|< q_w^{-5m_0\dim G}.
\]

Let $\{Q_n'.x_i: i\in I\}$ be a a disjoint covering of $\mathfrak{S}(R)$, then $\#I\leq \vol_{G'}(Q_n')^{-1}$, moreover, since $\mu(Y_{\rm good})\geq 0.9$, there exists some $i_0$ so that 
\[
\mu(Q_n'.x_{i_0}\cap Y_{\rm good})\geq \frac{1}{2\cdot (\# I)}. 
\]

    Recall now from Lemma~\ref{lem:stabilizer}, that ${\rm Stab}(\mu)=H\cdot {\bf C}(F)$.
    We now claim that there exist $y_1', y_2'\in Q_n'.x_{i_0}\cap Y_{\rm good}$ so that $y_2'=\disg'y_1'$ for some $\disg'\not\in {\rm Stab}(\mu)$.  Assume to the contrary that for every $y,y'\in Q_n'.x_{i_0}\cap Y_{\rm good}$, we have $y'=hy$ where $h\in {\rm Stab}(\mu)$. 
    Then, since $b\mapsto b x$ is injective for all $x\in\mathfrak S(R)$ and all $b\in Q_n'$, see Lemma~\ref{lem: inj radius}, we have 
    \[
    \mu(Q_n'.x_{i_0}\cap Y_{\rm good})\leq (\# {\bf C}(F))\cdot \mathsf{m}(Q_n' \cap H).
    \]
    From this we conclude that
    \begin{align*}
    \vol_{G'}(Q_n')&\leq (\#I)^{-1}\leq 2\mu(Q_n' x_{i_0}\cap Y_{\rm good})\\
    &\leq 2\cdot (\# {\bf C}(F))\cdot \mathsf{m}(Q_n' \cap H)\leq 2\cdot (\# {\bf C}(F)) \cdot \vol(Y)^{-1}
    \end{align*}
    which contradicts our assumption and proves the claim. 
    
    We now use Lemma~\ref{lem:adjustment-1} to move $\disg'$ to $\disg\in Q_{m-2m_0}$ while keeping points generic; moreover, 
    we need $\disg_w\neq 1$ and that $\|\disg_{w,\lambda}-I\|\gg\|\disg_w-I\|$ for some $\lambda\in\Phi$. 
   
Let $E_1,E_2\subset K_wA_{-m_0}K_w$ be so that 
\[
\Delta_g(E_i)=\mathsf H_{y_i'}\quad\text{and}\quad|K_wA_{-m_0}K_w\setminus E_i|\leq q_w^{-5m_0\dim\G}. 
\]
For $i=1,2$, let $\alpha_i\in \mathsf H_{y_i'}$ and $\lambda\in\Phi$ 
satisfy the conclusion of Lemma \ref{lem:adjustment-1}, and put $\disg:=\alpha_2 \disg'\alpha_1^{-1}$.
Note that $\disg\in Q_{n-2m_0}$, since $\disg_v=\disg'_v$ for any $v\ne w$, and by Lemma \ref{lem:adjustment-1} we have 
\[
\alpha_2 \disg_w'\alpha_1^{-1}=\disg_w\in K_w[n-2m_0]\subset G_w\times \{1\}
\]
satisfies $\|\disg_{w,\lambda}-I\|\gg \|\disg_w-I\|$. Moreover, since $\alpha_i\in {\rm Stab}(\mu)$ and $\disg'\not\in{\rm Stab}(\mu)$, we have $\disg\not\in{\rm Stab}(\mu)$.  

Let $y_i=\alpha_iy'_i$. Since $\alpha_i\in\mathsf H_{y'_i}$, we have $y_i\in Y'_{\rm good}$. 
Furthermore, since $y'_2=\disg' y'_1$, 
\[
y_2=\alpha_2y'_2= \alpha_2 \disg'\alpha_1^{-1} \alpha_1 y_1'=\disg y_1.
\]

It only remains to show that $\disg_w\neq 1$. Assume to the contrary that $\disg_w=1$. Then for every $\ell\geq \lfloor n/20\rfloor$ and $i=1,2$, we have 
\[
\Bigl|q_w^{\ell}\int_{B_w(0,q_w^{-\ell})}f(a'_\lambda(t) u'_\lambda(s)y_i)\diff\!s-\int_Xf\diff\!\mu\Bigr|\leq q_w^{-\ell/8M}\Sob(f)
\]
for all $f\in C_c^\infty(X)$. 
Now since $\disg_w=1$ and $a'_\lambda(t),u'_\lambda(t)\in H_{g,w}\subset G'_w$, we have 
\[
a'_\lambda(t)u'_\lambda(s)y_2=a'_\lambda(t)u'_\lambda(s)\disg y_1= \disg a'_\lambda(t)u'_\lambda(s) y_1
\]
for all $t,s$. This and the above imply that $\mu(\disg^{-1}.f)=\mu(f)$. That is, $\disg\in{\rm Stab}(\mu)$ which is a contradiction. The proof is complete. 
\end{proof}

\begin{corollary}\label{cor: almost inv Kw}
    There exists some $\constk\label{k: Kw final}$ and $\levelbound$ so that 
    \[
    |\mu(b.f)-\mu(f)|\ll \vol(Y)^{-\ref{k: Kw final}}\Sob(f),
    \]
    for every $b\in K_w'[\levelbound]$ and every $f\in C_c^\infty(X)$. \end{corollary}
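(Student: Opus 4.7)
The plan is to bootstrap the almost invariance for a single root subgroup produced by Lemma~\ref{lem:pigeonhole} and Proposition~\ref{prop:almost inv1} to almost invariance under all of $K_w'[L]$, using Lemma~\ref{lem:effectively generation} in combination with the $H_g$-invariance of $\mu$, and then reducing from $K_w'[L] \subset G_w'$ to a subgroup of $G^1$ by exploiting the twisted-diagonal structure of $H_{g,w}$.

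Choose $n$ to be the smallest integer for which
$\vol_{G'}(Q_n')^{-1}\leq \vol(Y)/(4\#{\bf C}(F))$.
Since $[K_w':K_w'[n]] \asymp q_w^{n\dim\G'}$, this forces $n\asymp \log\vol(Y)/\log q_w$. Applying Lemma~\ref{lem:pigeonhole} produces some $\lambda\in\Phi$, points $y_1,y_2\in X$ that are $\lfloor n/20\rfloor$-generic for $\mu$, and $\disg\in Q_{n-2m_0}\setminus\mathrm{Stab}(\mu)$ with $\disg_w\neq 1$, $\|\disg_{w,\lambda}-I\|\geq q_w^{-m_0}\|\disg_w-I\|$, and $y_2=\disg y_1$. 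Feeding this into Proposition~\ref{prop:almost inv1} yields a constant $\kappa_1>0$ so that
\[
|u_\lambda(\tau).\mu(f)-\mu(f)|\ll q_w^{-n\kappa_1}\Sob(f)\ll \vol(Y)^{-\kappa_2}\Sob(f)
\]
for every $\tau\in B(0,1)$, where the last inequality uses $n\log q_w\asymp \log\vol(Y)$ together with the bound $q_w\ll(\log\vol(Y))^2$ from Proposition~\ref{prop: splitting-place}.

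Now let $L_0=\ref{k: Gw generation}$ and $h_1,\dots,h_\ell\in H_{g,w}$ with $\|h_i\|\leq 1$ be as in Lemma~\ref{lem:effectively generation} for the root $\lambda$ above. Any $b_1\in K_w[L_0]\subset G^1$ factors as $b_1=g_1\cdots g_\ell$ with $g_i\in h_iU_\lambda[0]h_i^{-1}$, so telescoping gives
\[
\mu(b_1.f)-\mu(f)=\sum_{i=1}^\ell\bigl[\mu(g_i.f_i)-\mu(f_i)\bigr],\qquad f_i:=(g_1\cdots g_{i-1}).f.
\]
Since $h_i\in H_g$ preserves $\mu$, each summand equals $\mu(u_\lambda(\tau_i).(h_i^{-1}.f_i))-\mu(h_i^{-1}.f_i)$, which by the previous paragraph is $\ll\vol(Y)^{-\kappa_2}\Sob(h_i^{-1}.f_i)$. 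Property~S3 combined with the fact that the $h_i$ and the $g_j$ have norms bounded by constants depending only on $\G$ yields $\Sob(h_i^{-1}.f_i)\ll\Sob(f)$, and summing the $\ell$ terms gives $|\mu(b_1.f)-\mu(f)|\ll\vol(Y)^{-\kappa_2}\Sob(f)$ for every $b_1\in K_w[L_0]$.

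It remains to pass from $K_w[L_0]\subset G^1$ to $K_w'[L]$. Set $L=L_0$. Since $g\in G^2$, any $h=(h^1,h^2)\in H_{g,w}$ satisfies $h^2=g_w^{(2)}h^1g_w^{(2)-1}$; given $b=(b^1,b^2)\in K_w'[L]$, define
\[
h_b:=\bigl(g_w^{(2)-1}b^2g_w^{(2)},\ b^2\bigr)\in H_{g,w}.
\]
Proposition~\ref{prop: splitting-place}(3) forces $g_w^{(2)}\in\pi_w(K_w)$, and since $K_w[L]$ is normal in $K_w$ we get $h_b^1\in\pi_w(K_w[L])$, so $h_b^{-1}b=(h_b^{1-1}b^1,1)\in K_w[L_0]$ viewed in $G^1$. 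Because $h_b\in H_g$ preserves $\mu$, one has $\mu(b.f)-\mu(f)=\mu(h_b^{-1}b.f)-\mu(f)$, and the previous step finishes the proof with $\kappa=\kappa_2$. The main delicacy is the Sobolev inflation in the telescoping step: it is absorbed into a constant depending only on $\G$ precisely because the list $h_1,\dots,h_\ell$ is fixed by the root system and not by the particular $b\in K_w'[L]$.
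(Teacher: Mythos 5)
Your proof is correct and follows essentially the same route as the paper: choose $n$ so that $\vol_{G'}(Q_n')^{-1}$ is small relative to $\vol(Y)$, produce the generic pair $y_1, y_2 = \disg y_1$ via Lemma~\ref{lem:pigeonhole}, upgrade to $U_\lambda$-almost-invariance via Proposition~\ref{prop:almost inv1}, propagate to $K_w[\ref{k: Gw generation}]$ via Lemma~\ref{lem:effectively generation} and $H_g$-invariance, and finally pass from $K_w[\ref{k: Gw generation}]$ to $K_w'[\ref{k: Gw generation}]$ by factoring through $H_g$ (the paper simply states $K'_w[L]=K_w[L]\cdot(H_g\cap K'_w[L])$ and does this reduction first; you carry it out explicitly at the end, which amounts to the same computation). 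One small slip: in the telescoping identity the auxiliary functions should be $f_i=(g_{i+1}\cdots g_\ell).f$ rather than $(g_1\cdots g_{i-1}).f$, since $b_1.f=g_1.(g_2.\cdots(g_\ell.f))$; as written your sum telescopes to $\mu((g_\ell\cdots g_1).f)-\mu(f)$, not $\mu(b_1.f)-\mu(f)$. This is purely an indexing issue and does not affect the substance of the argument.
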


\begin{proof}
Since $K'_w[L]=K_w[L]\cdot (H_g\cap K'_w[L])$ for every $L\geq 0$ and $H_g$ leaves $\mu$ invariant, it suffices to find $\ref{k: Kw final}$ and $\levelbound$ so that the claim holds for all $b\in K_w[\levelbound]$.  

Let $n\in\bbn$ be so that 
\[
\vol_{G'}(Q_n')^{-1}\leq \frac{\vol(Y)}{4(\# {\bf C}(F))}\vol(Y),
\] 
see Lemma~\ref{lem:pigeonhole}. Then by that lemma, there exist $y_1$, $y_2=\disg y_1$, and $\lambda\in\Phi$ which are both $\lfloor n/20\rfloor$-generic for $\lambda$ and the measure $\mu$, so that $\disg\in Q_{n-2m_0}$, $\disg_w\neq 1$, and   
    \[
    \|\disg_{w,\lambda}-I\|\geq q_w^{-m_0}\|\disg_w-I\|.
    \]
    
    Let $m=n-2m_0$, Assuming $n$ is large enough, we have $\lfloor m/10\rfloor \geq \lfloor n/20\rfloor$.
    Apply Lemma~\ref{prop:almost inv1} with $c=m_0$ and these $y_1$, $y_2=\disg y_1$, and $m$. We thus conclude that 
    \begin{equation}\label{eq: almost inv use}
|u_\lambda(\tau).\mu(f)-\mu(f)|\ll q_w^{-m\ref{k:almost inv1}}\Sob(f)
\end{equation}
for all $\tau\in B(0,1)$ and all $f\in C_c^\infty(X)$. 

In view of Lemma~\ref{lem:effectively generation}, there exist $h_1,\ldots, h_\ell\in H_g$ so that every $b\in K_w[\ref{k: Gw generation}]$ may be written as 
$b=b_1\cdots b_\ell$,
where $b_i\in h_iU_\lambda [0]h_i^{-1}$ for every $1\leq i\leq \ell$. 

Since $h_i\in H_g$ leave the measure $\mu$ invariant, and we may choose $h_i$ so that $\|h_i\|\leq 1$, the above and~\eqref{eq: almost inv use} imply the claim with $L=\ref{k: Gw generation}$ and $\ref{k: Kw final}=\star\ref{k:almost inv1}$.
\end{proof}

\begin{proof}[Proof of Theorem~\ref{thm:main}]
 The proof goes along the same lines as the proof of~\cite[Thm.~1.5]{EMMV}; 
 we recall the details for the convenience of the reader.
 
 Let $f\in C_c^\infty(X)$. 
 Let $\levelbound$ be as in Corollary~\ref{cor: almost inv Kw}, and let $\operatorname{Av}$ be the operation of averaging over $K_w[\levelbound]$. Let us also fix some $\lambda\in\Phi$ and let $\delta_{u'_\lambda(s)}$ be the delta mass at $u'_{\lambda}(s)$ for some $s\in F_w$. Let $\mathbb T_s=\operatorname{Av} \star \delta_{u'_\lambda(s)} \star \operatorname{Av}$, where $\star$ denotes the convolution operation. 
 In view of Corollary~\ref{cor: almost inv Kw} and the fact that $u'_\lambda(s)\in H_g$ leaves $\mu$ invariant, we conclude that 
 \begin{equation}\label{AI}
|\mu(f)-\mu(\mathbb T_s f)|\leq
\vol(Y)^{-\star} (\Sob(\delta_{u'_\lambda(s)} \star \operatorname{Av} * f)+\Sob(f)).
\end{equation}
Therefore, 
\begin{equation}\label{eq: muf mXf}
\begin{aligned}
 |\mu(f)-m_X(f)|&\leq |\mu(\mathbb T_s f)-m_X(f)|+\vol(Y)^{-\star} (\Sob(\delta_{u'_\lambda(s)} \star \operatorname{Av} * f)+\Sob(f))\\
 &\leq|\mu(\mathbb T_s f)-m_X(f)|+\vol(Y)^{-\star}(1+|s|_w)^{4d}\Sob(f)   
\end{aligned}
\end{equation}
where we used $\Sob(\delta_{u'_\lambda(s)} \star \operatorname{Av} * f\leq (1+|s|_w)^{4Nd}\Sob(f)$.
We now estimate the first term on the right side of the above. 
First recall from S5.\ that 
\[
\Bigl|\mathbb{T}_sf(x)-\int f\operatorname{d}\!m_X\Bigr|\ll q_w^{(d+2)L} \height(x)^d\|\mathbb{T}_s\|_{2,0}\Sob( f),
\]
where $\|\mathbb{T}_s\|_{2,0}$ denotes the operator norm of $\mathbb{T}_s$ on $L^2_0(X,m_X)$. Therefore, 
\begin{align*}
|\mu(\mathbb T_s f)-m_X(f)|&=\Bigl|\int \mathbb T_s f-m_X(f)\operatorname{d}\!\mu\Bigr|\\
&\leq\Bigl|\int_{\mathfrak S(R)}\mathbb T_s f-m_X(f)\operatorname{d}\!\mu\Bigr|+\Bigl|\int_{X\setminus \mathfrak S(R)} \mathbb T_s f-m_X(f)\operatorname{d}\!\mu\Bigr|\\
&\leq q_w^{(d+2)\levelbound} R^d\|\mathbb{T}_s\|_{2,0}\Sob( f) + q_w^{\star}R^{-\star}\|f\|_\infty\\
&\leq \Bigl((1+|s|_w)^{-1/2M} q_w^{(3 d+2) \levelbound}R^d+q_w^\star R^{-\star}\Bigr)\Sob(f),
\end{align*}
in the last inequality we used $\|\mathbb T_s\|_{2,0}\ll (1+|s|_w)^{-1/2M} q_w^{2 d \levelbound}$, see~\eqref{eq: Ts 2-norm}.  

Optimizing $|s|_w$ and $R$, and using the fact that $q_w\ll(\log\vol(Y))^2$, we get the theorem from the above and~\eqref{eq: muf mXf}.
\end{proof}

\bibliographystyle{amsplain}
\bibliography{positive_proptau}

\end{document}